%
%
%
%

%

\documentclass{amsart}
\usepackage{stmaryrd}
\usepackage[all]{xy}
\usepackage{mathrsfs}
\usepackage{amscd,amssymb,color}




\newcommand{\Sdot}[1][\ssdot]{S_{#1}}

\newcommand{\iSdot}[1][\ssdot]{S^{\infty}_{#1}}

\mathchardef\varDelta="7101

\newcommand{\C}{\mathbf{C}}

\let\sma\wedge
\newcommand{\htp}{\simeq}
\renewcommand{\to}{\mathchoice{\longrightarrow}{\rightarrow}{\rightarrow}{\rightarrow}}

\newcommand{\cA}{{\mathcal A}}
\newcommand{\cB}{{\mathcal B}}
\newcommand{\cC}{{\mathcal C}}
\newcommand{\cD}{{\mathcal D}}

\newcommand{\cM}{{\mathcal M}}
\newcommand{\cS}{{\mathcal S}}

\newcommand{\cU}{{\mathcal U}}

\let\catsymbfont\mathcal
\newcommand{\aA}{{\catsymbfont{A}}}

\newcommand{\aC}{{\catsymbfont{C}}}
\newcommand{\aD}{{\catsymbfont{D}}}
\newcommand{\aE}{{\catsymbfont{E}}}

\newcommand{\aS}{{\catsymbfont{S}}}
\newcommand{\aT}{{\catsymbfont{T}}}

\renewcommand{\C}{\catsymbfont{C}}

\renewcommand{\L}{\mathrm{L}}

\newcommand{\bbZ}{\mathbb{Z}}
\newcommand{\bbN}{\mathbb{N}}

\newcommand{\bN}{{\mathbb{N}}}

\newcommand{\bZ}{{\mathbb{Z}}}

\def\quickop#1{\expandafter\DeclareMathOperator\csname
#1\endcsname{#1}}
\quickop{id}\quickop{Id}\quickop{colim}\quickop{hocolim}\quickop{op}
\quickop{co}\quickop{Ar}\quickop{sing}\quickop{Hom}\quickop{w}\quickop{Ho}
\quickop{ob}\quickop{diag}\quickop{Stab}\quickop{Cat}\quickop{Mot}\quickop{Mod}
\quickop{map}\quickop{Cone}\quickop{End}\quickop{Idem}\quickop{Perf}\quickop{Ind}
\quickop{Gap}\quickop{Shv}\quickop{Spaces}\quickop{cof}\quickop{ex}\quickop{perf}
\quickop{tri}\quickop{Set}\quickop{Fib}\quickop{Nat}\quickop{haut}\quickop{holim}\quickop{Tor}\quickop{Ext}\quickop{Diff}\quickop{lax}\quickop{Mult}\quickop{Ob}\quickop{cpt}\quickop{CAlg}\quickop{cf}\quickop{f}\quickop{KEnd}\quickop{Ch}\quickop{hofib}\quickop{KAut}

\newcommand{\lkan}[1]{(#1)_{\mathrm{iso}}}
\newcommand{\N}{\mathrm{N}}


\newcommand{\icat}{\Cat_\i}

\newcommand{\istabcat}{\Cat_\i^{\ex}}

\newcommand{\idemstabcat}{\Cat_\i^{\perf}}
\newcommand{\idemtimes}{\otimes^{\vee}}
\newcommand{\idemfun}{\Fun^{\ex}}


\newcommand{\Fun}{\mathrm{Fun}} 
\newcommand{\Map}{\mathrm{Map}}
\newcommand{\bbS}{\mathbb{S}}

\newcommand{\ispec}{\aS}

\newcommand{\Spt}{\mathcal{S}}

\newcommand{\Alg}{\mathrm{Alg}}

\newcommand{\sk}{\mathrm{sk}}

\newcommand{\ie}{{i.e.,}\ }

\newcommand{\too}{\longrightarrow}

\renewcommand{\i}{\infty}
\newcommand{\rep}{\mathrm{rep}} 

\newcommand{\Motadd}{\cM_{\mathrm{add}}}

\newcommand{\Umot}{\cU_{\mathrm{add}}}

\DeclareMathOperator{\Proj}{Proj}

\numberwithin{equation}{section}

\newtheorem{theorem}[equation]{Theorem}
\newtheorem*{theorem*}{Theorem}

\newtheorem{corollary}[equation]{Corollary}
\newtheorem{lemma}[equation]{Lemma}
\newtheorem{proposition}[equation]{Proposition}
\theoremstyle{definition}
\newtheorem{definition}[equation]{Definition}

\newtheorem{remark}[equation]{Remark}
\newtheorem{example}[equation]{Example}
\newtheorem{notation}[equation]{Notation}

\xyoption{arrow}
\xyoption{matrix}
\xyoption{cmtip}
\SelectTips{cm}{}

\newdir{ >}{{}*!/-5pt/\dir{>}}

\bibliographystyle{plain}

\begin{document}

\title[$K$-theory of endomorphisms via noncommutative motives]{$K$-theory of endomorphisms\\ via noncommutative motives}

\author{Andrew J. Blumberg}
\address{Department of Mathematics, University of Texas,
Austin, TX \ 78703}
\email{blumberg@math.utexas.edu}
\thanks{A.~J.~Blumberg was partially supported by the NSF grant DMS-0906105}
\author{David Gepner}
\address{Fakult\"at f\"ur Mathematik,
Universit\"at Regensburg, 93040 Regensburg, Germany}
\email{djgepner@gmail.com}
\thanks{}
\author{Gon{\c c}alo~Tabuada}
\address{Department of Mathematics, MIT, Cambridge, MA 02139}
\email{tabuada@math.mit.edu}
\thanks{G.~Tabuada was partially supported by the NEC award-2742738}
\subjclass[2000]{19D10, 19D25, 19D55, 18D20, 55N15}

\keywords{$K$-theory of endomorphisms, noncommutative motives, stable
$\i$-categories, Witt vectors}

\begin{abstract}
In this article we study the $K$-theory of endomorphisms using
noncommutative motives. We start by extending the $K$-theory of
endomorphisms functor from ordinary rings to (stable)
$\infty$-categories. We then prove that this extended functor $\KEnd(-)$
not only descends to the category of noncommutative motives but
moreover becomes co-represented by the noncommutative motive
associated to the tensor algebra $\bbS[t]$ of the sphere spectrum
$\bbS$.  Using this co-representability result, we then classify all
the natural transformations of $\KEnd(-)$ in terms of an integer plus a
fraction between polynomials with constant term $1$; this solves a
problem raised by Almkvist in the seventies.  Finally, making use of
the multiplicative co-algebra structure of $\bbS[t]$, we explain how
the (rational) Witt vectors can also be recovered from the symmetric
monoidal category of noncommutative motives.  Along the way we show
that the $K_0$-theory of endomorphisms of a {\em connective} ring
spectrum $R$ equals the $K_0$-theory of endomorphisms of the
underlying ordinary ring $\pi_0R$.
\end{abstract}

\maketitle
\vskip-\baselineskip
\vskip-\baselineskip
\vskip-\baselineskip
\setcounter{tocdepth}{1}
\tableofcontents
\vskip-\baselineskip
\vskip-\baselineskip
\vskip-\baselineskip

\section{Introduction}
\subsection*{$K$-theory of endomorphisms}
The $K$-theory of endomorphisms was introduced in the seventies by
Almkvist \cite{Almkvist,Almkvist1} and
Grayson \cite{Grayson,Graysonwitt}.  Given an ordinary ring $A$, let
${\bf P}(A)$ be the category of finitely generated projective (right)
$A$-modules and $\End({\bf P}(A))$ the associated category of
endomorphisms: its objects are the pairs $(M,\alpha)$, with $M \in
{\bf P}(A)$ and $\alpha$ an endomorphism of $M$, and its morphisms
$(M,\alpha) \to (M',\alpha')$ are the $A$-linear homomorphisms
$f:M \to M'$ verifying the equality $f \alpha=\alpha'f$.  Note that
this latter category inherits naturally from ${\bf P}(A)$ an exact
structure in the sense of Quillen \cite{Quillen}.  The classical {\em
$K$-theory of endomorphisms of $A$} was then defined as the homotopy
groups of the (connective) algebraic $K$-theory spectrum $\KEnd({\bf
P}(A))$ of the exact category $\End({\bf P}(A))$.

Bloch~\cite{Bloch}, and later Stienstra~\cite{Stienstra, Stienstra1},
related the $K$-theory of endomorphisms to crystalline
cohomology.  More recently, work of
Betley-Schlichtkrull \cite{BetleySchlichtkrull},
Hesselholt \cite{Hesselholtptypical}, and
Lindenstrauss-McCarthy \cite{LindenstraussMcCarthy}, establishes
precise connections between the $K$-theory of (parametrized)
endomorphisms, Goodwillie calculus, and trace methods in  
algebraic $K$-theory.  In this article, we study the more foundational
aspects of the $K$-theory of endomorphisms using noncommutative motives. 
\subsection*{Noncommutative motives}
Let $\idemstabcat$ be the $\infty$-category of small idempotent-complete
stable $\infty$-categories; see \S\ref{s:infty}.  Standard examples
are the $\infty$-category $\Perf_R$ of perfect modules over a ring
spectrum $R$ and the $\infty$-category $\Perf_X$ of perfect
complexes for a scheme $X$.  Recall 
from~\cite[6.1]{BGT} that a functor 
$E\colon \idemstabcat \to \cD$, with values in a stable presentable
$\infty$-category $\cD$, is called an {\em additive invariant} if it
preserves filtered colimits and sends split-exact sequences of
$\infty$-categories to (necessarily split) cofiber sequences of
spectra. Examples include algebraic $K$-theory (see \S\ref{s:kinf}),
topological Hochschild homology ($THH$), and topological cyclic
homology ($TC$).  In~\cite[\S 6]{BGT} we have constructed the {\em
universal additive invariant}  
\begin{equation}\label{eq:universal}
\Umot \colon \idemstabcat \too \Motadd\,.
\end{equation}
Given any stable presentable $\infty$-category $\cD$, there is an induced equivalence
\begin{equation}\label{eq:induced}
(\Umot)^\ast \colon \Fun^\L(\Motadd,\cD) \stackrel{\sim}{\too} \Fun_\mathrm{add}(\idemstabcat,\cD)
\end{equation}
where the left-hand side denotes the $\infty$-category of
colimit-preserving functors and the right-hand side the
$\infty$-category of additive invariants. Because of this property,
which is reminiscent of motives, $\Motadd$ is called the category of
{\em noncommutative motives}.  As with any stable $\infty$-category,
$\Motadd$ carries a natural enrichment $\Map(-,-)$ in spectra;
see~\cite[\S 4.2]{BGT}. In \cite[\S 7.3]{BGT} we proved that for every 
$\cC\in \idemstabcat$ there is a natural equivalence of spectra 
\begin{equation}\label{eq:corep1}
\Map(\Umot(\Perf_{\bbS}), \Umot(\cC)) \simeq K(\cC)\,,
\end{equation}
where $\bbS$ stands for the sphere spectrum.  Intuitively speaking,
algebraic $K$-theory becomes co-represented by the noncommutative
motive associated to $\bbS$.
\subsection*{Statements of results}
Given an $\i$-category $\aC$, we start by defining the {\em
$\i$-category $\mathrm{End}(\aC)$ of endomorphisms in $\cC$} as the
functor $\i$-category
$\End(\aC):=\Fun(\Delta^1/\partial\Delta^1,\aC)$; see
definition \ref{def:K-th-endo}. By first restricting this construction
to $\idemstabcat$ and then by applying the algebraic $K$-theory
functor, we obtain a well-defined $K$-theory of endomorphisms functor 
\begin{equation}\label{eq:key}
\KEnd \colon \idemstabcat \to \cS_\infty
\end{equation}
with values in the $\infty$-category of symmetric spectra.  Our first
main result characterizes this functor as follows:

\begin{theorem}{(see theorem \ref{thm:co-rep})}\label{thm:main1}
The above functor \eqref{eq:key} is an additive
invariant.  Moreover, for every $\cC \in \idemstabcat$ there is a
natural equivalence of spectra
\begin{equation}\label{eq:corep2}
\Map(\Umot(\Perf_{\bbS[t]}), \Umot(\cC)) \simeq \KEnd(\cC)\,,
\end{equation}
where $\bbS[t]$ stands for the tensor algebra of $\bbS$. 
\end{theorem}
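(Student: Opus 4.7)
The theorem has two claims---additivity of $\KEnd$ and the co-representability \eqref{eq:corep2}---which I would establish in sequence.

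For additivity, observe that $\End(-) = \Fun(\Delta^1/\partial\Delta^1, -)$ defines a functor $\idemstabcat \to \idemstabcat$, since stability and idempotent-completeness are inherited by $\i$-categories of functors. Because $\Delta^1/\partial\Delta^1$ has only finitely many non-degenerate simplices, $\End$ preserves filtered colimits. It also preserves split-exact sequences, which are computed objectwise in the target. Composing with the additive invariant $K$ of \cite[\S 6]{BGT} then yields $\KEnd = K\circ\End$ as an additive invariant.

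For the co-representability, the key input is that $\bbS[t]$ is the free $E_1$-algebra on one generator, so $\Perf_{\bbS[t]}$ is the free idempotent-complete stable $\i$-category on one endomorphism. Concretely, evaluation at the pair $(\bbS[t], t\cdot-)$ yields, for each $\cC \in \idemstabcat$, a natural equivalence
\[
\Fun^{ex}(\Perf_{\bbS[t]}, \cC) \simeq \End(\cC)
\]
in $\idemstabcat$. The plan is then to upgrade \eqref{eq:corep1} to the generality
\[
\Map_{\Motadd}(\Umot(\cD), \Umot(\cC)) \simeq K\bigl(\Fun^{ex}(\cD, \cC)\bigr) \quad (\cD \in \idemstabcat),
\]
and apply this with $\cD = \Perf_{\bbS[t]}$ together with the preceding equivalence.

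To prove the generalized co-representability, both sides are additive invariants of $\cC$: the left-hand side because $\Umot(\Perf_{\bbS[t]})$ is compact in $\Motadd$ (as $\bbS[t]$ is a connective ring spectrum, $\Perf_{\bbS[t]}$ is compact in $\idemstabcat$, and $\Umot$ preserves compact objects); and the right-hand side by the first paragraph applied to $\Fun^{ex}(\Perf_{\bbS[t]}, -)\simeq\End(-)$. By the universal property \eqref{eq:induced}, each corresponds to a colimit-preserving functor $\Motadd\to\cS_\infty$. I would construct the natural comparison using the symmetric monoidality of $\Umot$ (\cite[\S 7]{BGT}) together with the closed symmetric monoidal structure of $\idemstabcat$, and then verify the comparison on the compact generators $\Umot(\Perf_R)$, where both sides reduce via a Morita-theoretic computation to $K(R\otimes_\bbS \bbS[t])$. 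The main obstacle is making this last step coherent despite $\Perf_{\bbS[t]}$ being smooth but not proper---hence compact but not dualizable in $\idemstabcat$---which requires careful handling of the internal hom and the symmetric monoidal structure on $\Motadd$.
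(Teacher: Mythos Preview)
Your overall architecture---compactness of $\Perf_{\bbS[t]}$, the equivalence $\Fun^{\ex}(\Perf_{\bbS[t]},\cC)\simeq\End(\cC)$, and the general formula $\Map(\Umot(\cD),\Umot(\cC))\simeq K(\Fun^{\ex}(\cD,\cC))$ for compact $\cD$---is exactly what the paper uses. Two differences are worth flagging. First, the paper does not prove additivity separately: once co-representability is established, additivity is automatic, since any functor of the form $\Map(X,\Umot(-))$ with $X$ compact is an additive invariant. Your direct argument via preservation of filtered colimits and split-exact sequences is correct, just redundant. Second, the paper does not rederive the general co-representability formula; it is quoted from \cite[7.13]{BGT}, whose proof uses the explicit presheaf description of $\Motadd$ and the Yoneda lemma rather than the monoidal structure.

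There is, however, a genuine error in your final reduction. You claim that on the generators $\cC=\Perf_{R}$ both sides reduce to $K(R\otimes_{\bbS}\bbS[t])=K(R[t])$. This is false for the right-hand side: $\Fun^{\ex}(\Perf_{\bbS[t]},\Perf_{R})\simeq\End(\Perf_{R})$ is the $\i$-category of $R[t]$-modules that are perfect \emph{over $R$}, which is not $\Perf_{R[t]}$ (cf.\ Example~\ref{ex:complexes}). Thus $\KEnd(\Perf_{R})\not\simeq K(R[t])$ in general. The left-hand side would only reduce to $K(R[t])$ if $\Perf_{\bbS[t]}$ were self-dual in $\idemstabcat$, which you already note it is not. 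So the symmetric monoidal route you sketch does not close: the obstacle you identify at the end is not a technicality but a real mismatch, and your proposed verification on generators compares the wrong invariants. The fix is simply to invoke \cite[7.13]{BGT} directly, which requires only compactness of $\cD$ and makes no use of dualizability or the monoidal structure.
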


Intuitively speaking, theorem~\ref{thm:main1} shows us that the
functor \eqref{eq:key} not only descends to $\Motadd$ but moreover
becomes co-represented by the noncommutative motive associated to
$\bbS[t]$.  By adding a ``formal variable'' $t$ to $\bbS$ one passes
then from algebraic $K$-theory \eqref{eq:corep1} to $K$-theory of
endomorphisms \eqref{eq:corep2}.  The following result justifies the
correctness of our construction.
 
\begin{theorem}{(see theorem~\ref{thm:agreement2})}\label{thm:main2intro}
Given an ordinary ring $A$, the associated spectrum $\KEnd(\Perf_{HA})$
(where $HA$ denotes the Eilenberg-MacLane ring spectrum of $A$)
is naturally equivalent to $\KEnd({\bf P}(A))$.
\end{theorem}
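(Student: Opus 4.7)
The plan is to identify both sides with a common $\infty$-categorical $K$-theory, namely that of $\End(\mathcal{D}^b(\mathbf{P}(A)))$. On the $\infty$-categorical side, the stable $\infty$-category $\Perf_{HA}$ is equivalent to the bounded derived $\infty$-category $\mathcal{D}^b(\mathbf{P}(A))$, since a perfect $HA$-module is the same datum as a bounded complex of finitely generated projective $A$-modules taken up to quasi-isomorphism. Because $\End$ is the cotensor $\Fun(\Delta^1/\partial\Delta^1,-)$, it preserves equivalences of stable $\infty$-categories, so
\begin{equation*}
\KEnd(\Perf_{HA}) \;\simeq\; K(\End(\mathcal{D}^b(\mathbf{P}(A)))).
\end{equation*}

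For the classical side, I would invoke the Gillet--Waldhausen theorem applied to the exact category $\End(\mathbf{P}(A))$: its Quillen $K$-theory is naturally equivalent to the $\infty$-categorical $K$-theory of the associated bounded derived $\infty$-category. Thus
\begin{equation*}
\KEnd(\mathbf{P}(A)) \;\simeq\; K(\mathcal{D}^b(\End(\mathbf{P}(A)))),
\end{equation*}
and the theorem reduces to producing a natural equivalence of stable $\infty$-categories
\begin{equation*}
\mathcal{D}^b(\End(\mathbf{P}(A))) \;\simeq\; \End(\mathcal{D}^b(\mathbf{P}(A))),
\end{equation*}
expressing that the bounded derived $\infty$-category commutes with the $\End$ construction in this setting.

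To construct this equivalence, I would write down the tautological comparison functor sending a bounded complex of pairs $(P_\bullet,\alpha_\bullet)$ to $P_\bullet$ equipped with the derived endomorphism induced by $\alpha_\bullet$. Essential surjectivity is the classical statement that any derived endomorphism of a perfect complex rigidifies to a strict chain endomorphism after cofibrant/projective replacement; full faithfulness on mapping spectra reduces to a parallel argument, rigidifying coherent intertwining homotopies to strict commutation relations. The main obstacle is tracking all the higher coherences at the $\infty$-categorical level. The cleanest route is to present both sides via their natural dg-enhancements: both arise from the dg-category of bounded complexes of $A[t]$-modules that are finitely generated and projective as $A$-modules, and their underlying stable $\infty$-categories then agree via the dg-nerve construction.
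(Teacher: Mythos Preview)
Your strategy matches the paper's closely: both pass through the identification $\Ch^b(\End(\mathbf{P}(A)))\cong\End(\Ch^b(\mathbf{P}(A)))$ and compare each side of the theorem to it. The paper's Agreement~I (Proposition~\ref{prop:agreement1}) is exactly the rectification statement underlying your ``both sides arise from the same dg-model'' claim---that $\Fun(\Delta^1/\partial\Delta^1,\N\aC[W^{-1}])$ agrees with $\N(\End(\aC))[W^{-1}]$---so your cleanest route, once unpacked, is that proposition. Your Gillet--Waldhausen step is the paper's citation of Thomason--Trobaugh~1.11.7.

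Where you diverge is in the final comparison. You assert the $\infty$-categorical equivalence $\mathcal{D}^b(\End(\mathbf{P}(A)))\simeq\End(\mathcal{D}^b(\mathbf{P}(A)))$ directly; the paper instead proves only that the inclusion $\End(\Ch^b(\mathbf{P}(A)))\hookrightarrow\End(\mathrm{perf}(A))$ is a $K$-theory equivalence, by exhibiting a bounded $t$-structure on $\Ho(\End(\mathrm{perf}(A)))$ with heart $\End(\mathbf{P}(A))$ (Lemma~\ref{lem:t}) and invoking Barwick's Theorem of the Heart. Your route is cleaner and yields a stronger conclusion, but you should make explicit that the two localizations of $\Ch^b(\End(\mathbf{P}(A)))$ in play---at quasi-isomorphisms for the exact structure on $\End(\mathbf{P}(A))$ versus at underlying $A$-module quasi-isomorphisms---coincide; they do, since bounded acyclic complexes of projectives are contractible, so the inherited exact structure detects the same acyclics. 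The paper's $t$-structure argument trades that check for a black-box citation, at the cost of only concluding an equivalence of $K$-theory spectra rather than of $\infty$-categories.
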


As explained by Almkvist in \cite[page~339]{Almkvist}, a very
interesting problem in the $K$-theory of endomorphisms is the
classification of all the natural transformations of the functor
$A \mapsto \KEnd({\bf P}(A))$.  Classical examples include the
Frobenius $F_n$ and the Verschiebung $V_n$ operations.  This problem
was studied in the particular case of the $K_0$-theory of
endomorphisms by Hazewinkel~\cite{Hazewinkel1} and many operations in
the higher $K$-theory of endomorphisms were computed by
Stienstra \cite{Stienstra,Stienstra1}.  In \S\ref{sec:operations} we
extend $F_n$ and $V_n$ to the $\infty$-categorical setting and (making
use of theorem~\ref{thm:main1}) solve the problem raised by Almkvist
as follows: given a (commutative) ring $A$, let us write $W_0(A)$ for
the multiplicative (abelian) group
\begin{equation}\label{eq:form}
\left\{\frac{1+a_1r + \cdots + a_ir^i  +\cdots +
a_nr^n}{1+b_1r+ \cdots + b_jr^j + \cdots + b_mr^m} \,|\, a_i,b_j \in
A \,\, n,m \geq 0 \right\}\,.
\end{equation}
\begin{theorem}{(see theorem~\ref{thm:natural})}\label{thm:main3}
There is a canonical weak equivalence of spectra
\begin{equation}\label{eq:eq-spectra}
\Nat(\KEnd,\KEnd) \htp \KEnd(\Perf_{\bbS[t]}),
\end{equation}
where $\Nat$ stands for the spectrum of natural transformations.
Moreover, the group $\pi_0 \Nat(\KEnd,\KEnd)$ of natural
transformations up to homotopy is isomorphic to
\begin{equation}\label{eq:ab-groups}
\pi_0 \KEnd(\Perf_{\bbS[t]}) \simeq \bbZ \oplus W_0(\bbZ[t])\,.
\end{equation}
Furthermore, under the above identifications, the
Frobenius operations $F_n$ correspond to the elements $(1,1+r^nt)$ and
the Verschiebung operations $V_n$ to the elements $(n,1+rt^n)$.
\end{theorem}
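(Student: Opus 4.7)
The plan is to combine the co-representability result of Theorem~\ref{thm:main1} with enriched Yoneda inside $\Motadd$, to compute $\pi_0$ via the agreement theorems together with Almkvist's classical result, and then to trace $F_n$ and $V_n$ through the resulting isomorphisms. Concretely, since $\KEnd$ is additive (Theorem~\ref{thm:main1}), the universal property~\eqref{eq:induced} yields a unique colimit-preserving factorization $\widetilde{\KEnd}\colon\Motadd\to\cS_\infty$, which~\eqref{eq:corep2} identifies with $\Map_{\Motadd}(\Umot(\Perf_{\bbS[t]}),-)$. Under~\eqref{eq:induced}, the spectrum $\Nat(\KEnd,\KEnd)$ agrees with that of natural transformations between colimit-preserving endofunctors of $\Motadd$, and by the enriched Yoneda lemma in this stable presentable $\infty$-category, these form
\[
\Map_{\Motadd}\bigl(\Umot(\Perf_{\bbS[t]}),\Umot(\Perf_{\bbS[t]})\bigr)\simeq \KEnd(\Perf_{\bbS[t]}),
\]
establishing~\eqref{eq:eq-spectra}.

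For the group-level computation, combining Theorem~\ref{thm:main2intro} with the connective reduction mentioned in the abstract (that $K_0$ of the endomorphisms of a connective ring spectrum depends only on its $\pi_0$), applied to $R=\bbS[t]$ with $\pi_0R=\bbZ[t]$, yields
\[
\pi_0\KEnd(\Perf_{\bbS[t]})\simeq K_0\End({\bf P}(\bbZ[t])).
\]
Almkvist's theorem~\cite{Almkvist} identifies the right-hand side with $\bbZ\oplus W_0(\bbZ[t])$ via the pair (rank, reverse characteristic polynomial), producing~\eqref{eq:ab-groups}.

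For the last assertion, the Frobenius and Verschiebung operations are defined $\infty$-categorically in~\S\ref{sec:operations}, with $F_n(M,\alpha)=(M,\alpha^n)$ and $V_n(M,\alpha)$ obtained from the companion-matrix construction on $M^{\oplus n}$; each is verified to be an additive invariant endomorphism of $\KEnd$. Under the Yoneda correspondence established above, every such operation is determined by its value on the universal class $[(\bbS[t],\cdot t)]\in\pi_0\KEnd(\Perf_{\bbS[t]})$, which passes to the class $(1,1+rt)\in\bbZ\oplus W_0(\bbZ[t])$ under the composite isomorphism. A direct rank-and-companion-matrix calculation then produces the stated formulas. The main obstacle will be precisely this final bookkeeping step --- pinning down the universal class under the composite isomorphism, and checking that Almkvist's identification translates the $\infty$-categorical $F_n$ and $V_n$ into exactly the advertised characteristic polynomials with all sign and variable-name conventions coherent --- whereas the enriched Yoneda argument in the first paragraph and the invocation of Almkvist's theorem in the second are essentially formal.
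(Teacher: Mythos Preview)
Your proposal is correct and follows essentially the same route as the paper: factor $\KEnd$ through $\Motadd$ via the universal property~\eqref{eq:induced}, apply enriched Yoneda using the co-representability~\eqref{eq:corep2}, reduce $\pi_0$ to $\bbZ[t]$ via Theorem~\ref{thm:main} and invoke Almkvist's isomorphism, and finally identify $F_n$ and $V_n$ by tracking the universal class through the agreement Theorem~\ref{thm:main2intro}. One small slip: the factorizations $\widetilde{\KEnd}$ are colimit-preserving functors $\Motadd\to\cS_\infty$, not endofunctors of $\Motadd$, so the Yoneda step takes place in $\Fun^{\mathrm{L}}(\Motadd,\cS_\infty)$ rather than among endofunctors---but your displayed equation already reflects this correctly.
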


Roughly speaking, Theorem~\ref{thm:main3} shows us that all the
information concerning a natural transformation of the above
functor \eqref{eq:key} can be completely encoded in an integer plus a 
fraction between polynomials with constant term $1$. It shows also us
that the Frobenius (resp. the Verschiebung) operation is the
``simplest one'' with respect to the variable $t$ (resp. $r$). In
order to prove the above isomorphism \eqref{eq:ab-groups} we
have made the following computation which is of general interest:

\begin{theorem}{(see theorem \ref{thm:endcomp})}\label{thm:main}
For every {\em connective} ring spectrum $R$, one has an isomorphism
$\pi_0 \KEnd(\Perf_{R}) \simeq K_0(\End({\bf P}(\pi_0 R)))$ of abelian
groups.
\end{theorem}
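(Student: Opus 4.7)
I would aim to reduce the claim to Theorem~\ref{thm:agreement2}, which handles the case $R=HA$ for $A$ a discrete ring, by showing that $\pi_0\KEnd$ of perfect modules depends only on the underlying ordinary ring $\pi_0 R$. The reduction proceeds in two steps.

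\textbf{Step 1.} I would identify $\pi_0\KEnd(\Perf_R)$ with $\pi_0 K(\End(\mathbf{P}(R)))$, where $\mathbf{P}(R) \subset \Perf_R$ denotes the full subcategory of finitely generated projective $R$-modules (retracts of $R^{\oplus n}$). Given $(M,\alpha) \in \End(\Perf_R)$, I would choose a finite resolution $F_\bullet \to M$ by finitely generated free $R$-modules; using the projectivity of the $F_i$, the endomorphism $\alpha$ lifts to a chain endomorphism $\alpha_\bullet \colon F_\bullet \to F_\bullet$, unique up to chain homotopy. The cofiber-sequence relations in the $\infty$-categorical $K_0$ then yield
\[
[(M,\alpha)] \;=\; \sum_i (-1)^i [(F_i, \alpha_i)]
\]
in $\pi_0\KEnd(\Perf_R)$, so the inclusion $\End(\mathbf{P}(R)) \hookrightarrow \End(\Perf_R)$ is surjective on $\pi_0 K$. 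Injectivity follows from an $\i$-categorical version of Quillen's resolution theorem applied to this inclusion.

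\textbf{Step 2.} I would identify $\pi_0 K(\End(\mathbf{P}(R)))$ with $K_0(\End(\mathbf{P}(\pi_0 R)))$. The truncation functor $\pi_0\colon \mathbf{P}(R) \to \mathbf{P}(\pi_0 R)$ is an equivalence on homotopy categories: every f.g.\ projective $\pi_0 R$-module lifts uniquely (up to iso) to a f.g.\ projective $R$-module because idempotents in $M_n(\pi_0 R) = \pi_0 M_n(R)$ lift automatically to $M_n(R)$ by connectivity of $R$; moreover the mapping spectra satisfy $\pi_0\Map_R(P,P') = \Hom_{\pi_0 R}(\pi_0 P, \pi_0 P')$. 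Taking endomorphism categories preserves this equivalence on homotopy categories, and hence on $K_0$. Combining Steps 1 and 2 gives the desired isomorphism.

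The main obstacle is the injectivity in Step 1, i.e., controlling cofiber-sequence relations in the larger category $\End(\Perf_R)$ in terms of those internal to $\End(\mathbf{P}(R))$. The remedy is a resolution-theorem style argument: any cofiber sequence $(M',\alpha') \to (M,\alpha) \to (M'',\alpha'')$ in $\End(\Perf_R)$ can, after choosing compatible finite free resolutions of its three terms (relying on the projectivity of the $F_i$ and uniqueness-up-to-chain-homotopy of the lift of $\alpha$), be converted into a short exact sequence of finite chain complexes in $\End(\mathbf{P}(R))$, yielding the same relation at the level of alternating sums.
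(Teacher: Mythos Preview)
Your two-step strategy is exactly the one the paper follows: first pass from $\End(\Perf_R)$ to $\End(\Proj_R)$ on $K_0$, then from $\End(\Proj_R)$ to $\End(\mathbf{P}(\pi_0 R))$. The difficulties, however, are not quite where you locate them, and your Step~2 contains a genuine gap.

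\textbf{Step 1.} Your picture of a ``finite resolution $F_\bullet\to M$ by finitely generated free $R$-modules'' and a ``chain endomorphism $\alpha_\bullet$ unique up to chain homotopy'' is borrowed from ordinary homological algebra and does not literally make sense over a connective ring spectrum: a perfect $R$-module has a finite \emph{filtration} with shifted finite projective subquotients, not a chain complex of projectives mapping to it. The paper makes this precise via Tor-amplitude (Theorem~\ref{thm:pizero1}): one builds the filtration inductively, and at each stage lifts $\alpha$ through the cofiber sequence $\Sigma^a P\to M\to N$ using the projectivity criterion of Proposition~\ref{prop:proj}(4) (the map $P\to\Sigma^{-a}M$ is $\pi_0$-surjective, so the square can be filled). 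Your surjectivity argument is thus essentially correct once reformulated this way. For injectivity the paper does not invoke a resolution theorem; it constructs an explicit inverse $j$ and checks by a separate induction that $j$ respects the cofiber-sequence relations. Your proposed ``lift a cofiber sequence to a short exact sequence of chain complexes'' is not obviously meaningful in this setting and would need substantial reworking.

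\textbf{Step 2.} Here is the real gap. You assert that the equivalence $\Ho(\Proj_R)\simeq\mathbf{P}(\pi_0 R)$, once established, passes to endomorphism categories and hence to $K_0$. But the canonical functor
\[
\Ho(\End(\Proj_R))\too\End(\Ho(\Proj_R))
\]
is \emph{not} an equivalence: a morphism on the right is a square that commutes in $\Ho$, whereas a morphism on the left is a square with a \emph{specified} homotopy. Knowing $\pi_0\Map_R(P,P')\cong\Hom_{\pi_0 R}(\pi_0 P,\pi_0 P')$ lets you lift objects of $\End$, but not obviously arrows or exact sequences. The paper handles this with a rigidification argument: Lemma~\ref{lem:2skel} shows that any $2$-skeletal diagram in $\N(\Ho(\aC))$ lifts to $\aC$, whence (Corollary~\ref{cor:surj}) the functor above is surjective on arrows, and Proposition~\ref{prop:hoendvsendho} deduces that it is a $K_0$-isomorphism by checking that both generators and relations lift. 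This step is not automatic and is precisely what your sentence ``taking endomorphism categories preserves this equivalence on homotopy categories, and hence on $K_0$'' elides.

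Finally, the reduction to Theorem~\ref{thm:agreement2} announced in your first sentence never actually occurs in your argument (nor in the paper's); the proof is a direct $K_0$ computation and does not pass through the spectrum-level comparison for discrete rings.
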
 

One interesting feature of the $K$-theory of endomorphisms is its
connection with Witt vectors.  Given a {\em commutative} ring $A$, the
{\em Witt ring $W(A)$} of $A$ is the abelian group of all power series
of the form $1+ a_1r+ a_2 r^2 + \cdots$, with $a_i \in A$, endowed
with the multiplication $\ast$ determined by the equality
$(1-a_1r)\ast (1-a_2r) = (1-a_1a_2r)$. The {\em rational} Witt ring
$W_0(A) \subset W(A)$ of $A$ consists of the elements of the
form \eqref{eq:form}. As observed by Grayson~\cite{Grayson}, $W_0(A)$
is a dense $\lambda$-subring of $W(A)$ and hence $W(A)$ can be
recovered from $W_0(A)$ by a completion procedure.

As proved in~\cite[\S4]{BGT1}, the standard symmetric monoidal
structure $-\otimes^\vee-$ on $\idemstabcat$ can be extended to
$\Motadd$ in a universal way making \eqref{eq:universal} symmetric
monoidal.  This additional structure allows us to recover the rational
Witt ring (and hence the Witt ring) from the category of
noncommutative motives as follows:

\begin{theorem}{(see \S\ref{sec:Witt})}\label{thm:new-intro}
The ring maps $\bbS[t]\stackrel{t \mapsto
t \wedge t}{\to} \bbS[t] \wedge \bbS[t]$ and
$\bbS[t] \stackrel{t=1}{\to} \bbS$ induce a counital
coassociative cocommutative coalgebra structure on
$\Perf_{\bbS[t]} \in \idemstabcat$. Moreover, the ring maps
$\bbS \to \bbS[t]$ and
$\bbS[t] \stackrel{t=0}{\to} \bbS$ give rise to a wedge sum
decomposition
$\Umot(\Perf_{\bbS[t]}) \simeq \Umot(\Perf_{\bbS}) \vee \mathbb{W}_0$ of
counital coassociative cocommutative coalgebras in $\Motadd$.
\end{theorem}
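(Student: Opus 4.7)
The plan is to break the theorem into two assertions---the coalgebra structure on $\Perf_{\bbS[t]}$ and the wedge decomposition as coalgebras in $\Motadd$---and prove them in turn.

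For the first assertion, the idea is to exploit the symmetric monoidality of the perfect modules construction. The assignment $R \mapsto \Perf_R$ extends to a symmetric monoidal functor $(\CAlg(\Sp), \wedge) \to (\idemstabcat, \idemtimes)$; indeed, $\Perf_{R \wedge S} \simeq \Perf_R \idemtimes \Perf_S$ by the compatibility established in~\cite[\S4]{BGT1}. The ring spectrum maps $\bbS[t] \to \bbS[t] \wedge \bbS[t]$ (sending $t \mapsto t \wedge t$) and $\bbS[t] \to \bbS$ (sending $t \mapsto 1$) visibly satisfy the counital coassociative cocommutative coalgebra axioms---checked by inspection on the generator $t$---so applying this symmetric monoidal functor transports them to the claimed coalgebra structure on $\Perf_{\bbS[t]} \in \idemstabcat$. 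Since $\Umot$ is itself symmetric monoidal, this in turn yields a coalgebra structure on $\Umot(\Perf_{\bbS[t]}) \in \Motadd$.

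For the wedge decomposition, first note that the composition $\bbS \to \bbS[t] \to \bbS$ of ring maps (with the second map sending $t \mapsto 0$) equals the identity on $\bbS$. Applying $\Perf$ and then the universal additive invariant $\Umot$, we obtain morphisms $\Umot(\Perf_{\bbS}) \to \Umot(\Perf_{\bbS[t]}) \to \Umot(\Perf_{\bbS})$ in $\Motadd$ whose composite is the identity; by stability of $\Motadd$ this splitting yields the wedge decomposition $\Umot(\Perf_{\bbS[t]}) \simeq \Umot(\Perf_{\bbS}) \vee \mathbb{W}_0$ with $\mathbb{W}_0 := \mathrm{cofib}(\Umot(\Perf_{\bbS}) \to \Umot(\Perf_{\bbS[t]}))$. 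To upgrade this to a decomposition of coalgebras, we first observe that the inclusion $\bbS \to \bbS[t]$ is itself a bialgebra morphism (taking $\bbS$ with its trivial coalgebra structure), so $\Umot(\Perf_{\bbS}) \to \Umot(\Perf_{\bbS[t]})$ is a coalgebra morphism; the coalgebra structure on $\mathbb{W}_0$ is then defined by composing the inclusion $\mathbb{W}_0 \hookrightarrow \Umot(\Perf_{\bbS[t]})$, the comultiplication, and the projection onto $\mathbb{W}_0 \idemtimes \mathbb{W}_0$, with the analogous composition defining the counit.

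The main obstacle is reconciling the two distinct roles played by the maps $\bbS[t] \to \bbS$: the counit of the coalgebra structure arises from $t \mapsto 1$, whereas the splitting underlying the wedge decomposition arises from $t \mapsto 0$. One must verify that the comultiplication on $\Umot(\Perf_{\bbS[t]})$ has no ``mixed'' components in $\Umot(\Perf_{\bbS}) \idemtimes \mathbb{W}_0$ or $\mathbb{W}_0 \idemtimes \Umot(\Perf_{\bbS})$, so that the wedge sum genuinely underlies a direct sum of coalgebras. The key input is the spectrum-level decomposition $\bbS[t] \simeq \bbS \vee \bar\bbS[t]$ (where $\bar\bbS[t] = \bigvee_{n \geq 1} \bbS \cdot t^n$), which the comultiplication respects in the sense that $\Delta(t^n) = t^n \wedge t^n$ lies in $\bar\bbS[t] \wedge \bar\bbS[t]$ for $n \geq 1$. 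Since $\Umot$ is an additive invariant, this block-diagonality of the diagonal at the ring spectrum level is transported through $\Perf$ and $\Umot$ to the desired block-diagonality of the comultiplication in $\Motadd$, completing the proof.
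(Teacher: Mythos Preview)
Your treatment of the coalgebra structure on $\Perf_{\bbS[t]}$ is fine and matches the paper's Proposition~\ref{prop:coalgebra} in spirit. The gap is in the final paragraph, where you try to transport the ``block-diagonality'' of the diagonal on $\bbS[t]\simeq\bbS\vee\bar\bbS[t]$ through $\Perf$ and $\Umot$ by invoking additivity. This does not work: the splitting $\bbS[t]\simeq\bbS\vee\bar\bbS[t]$ is a decomposition of $\bbS$-\emph{modules}, not of ring spectra (indeed $\bar\bbS[t]$ is not unital), so there is no associated decomposition of $\Perf_{\bbS[t]}$ as a stable $\infty$-category, and hence nothing for the additivity of $\Umot$ to act on. The object $\mathbb{W}_0$ is a summand of $\Umot(\Perf_{\bbS[t]})$ in $\Motadd$, but it does not arise as $\Umot$ of anything, so you cannot argue about its comultiplication by computations in $\bbS[t]$ alone.

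The paper's route avoids this entirely by observing that not only the inclusion $\iota\colon\bbS\to\bbS[t]$ but also the projection $(t=0)\colon\bbS[t]\to\bbS$ is a map of coalgebras (since $\Delta(t)=t\wedge t\mapsto 0\wedge 0=0=\Delta(0)$). Thus the idempotent $\iota\circ(t=0)$ on $\Umot(\Perf_{\bbS[t]})$ is a coalgebra endomorphism, and the splitting is then a retraction \emph{of coalgebras}. One concludes either by noting that colimits in coalgebra objects are computed in the underlying $\infty$-category (so the filtered-colimit splitting of the idempotent stays in coalgebras), or by the elementary algebra recorded in the paper's appendix: given a coalgebra retraction $B\rightleftarrows A$ with both maps coalgebra morphisms, the complement $C$ inherits a counital coassociative cocommutative coalgebra structure and $A\simeq B\oplus C$ as coalgebras. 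You had already noted $\iota$ is a coalgebra map; the missing step is simply to check the same for $(t=0)$ and then appeal to this formal splitting lemma, rather than attempting to push a module-level decomposition through $\Perf$.
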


Using theorem~\ref{thm:new-intro}, we then obtain a well-defined lax
symmetric monoidal functor $\Map(\mathbb{W}_0,-)$ from $\Motadd$ to
$\cS_\infty$ which we call the {\em rational Witt ring spectrum
functor}.  This terminology is justified by the following agreement
result: 

\begin{theorem}{(see theorem~\ref{thm:decomposition})}\label{thm:main4}
For every ordinary commutative ring $A$ one has a ring isomorphism 
\begin{equation}\label{eq:Witt}
\pi_0 \Map(\mathbb{W}_0, \Umot(\Perf_{HA}))\simeq W_0(A)\,.
\end{equation}
\end{theorem}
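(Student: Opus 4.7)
The plan is to obtain the identification \eqref{eq:Witt} by combining the wedge sum decomposition of Theorem~\ref{thm:new-intro} with the co-representability results for algebraic $K$-theory and for $\KEnd$. First, I would apply the lax symmetric monoidal functor $\Map(-,\Umot(\Perf_{HA}))$ to the decomposition
\[
\Umot(\Perf_{\bbS[t]}) \simeq \Umot(\Perf_{\bbS}) \vee \mathbb{W}_0
\]
of coalgebras in $\Motadd$. Since $\Motadd$ is stable, wedge sums become products under $\Map$, giving a natural equivalence
\[
\Map(\Umot(\Perf_{\bbS[t]}),\Umot(\Perf_{HA})) \simeq \Map(\Umot(\Perf_{\bbS}),\Umot(\Perf_{HA})) \times \Map(\mathbb{W}_0,\Umot(\Perf_{HA})).
\]
Using the co-representability equivalences \eqref{eq:corep1} and \eqref{eq:corep2}, this rewrites as a splitting
\[
\KEnd(\Perf_{HA}) \simeq K(\Perf_{HA}) \vee \Map(\mathbb{W}_0,\Umot(\Perf_{HA})),
\]
with the two summands induced, respectively, by the maps $\bbS \to \bbS[t]$ and $\bbS[t] \xrightarrow{t=0} \bbS$.

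Next I would pass to $\pi_0$. By Theorem~\ref{thm:main2intro} and Theorem~\ref{thm:main}, one has $\pi_0 \KEnd(\Perf_{HA}) \cong K_0(\End(\mathbf{P}(A)))$, while $\pi_0 K(\Perf_{HA}) \cong K_0(A)$. Almkvist's classical computation~\cite{Almkvist} provides a natural ring splitting $K_0(\End(\mathbf{P}(A))) \cong K_0(A) \oplus W_0(A)$ for a commutative ring $A$, where the $K_0(A)$-summand is detected by the forgetful functor $(M,\alpha)\mapsto M$ (i.e., setting $t=0$) and the $W_0(A)$-summand corresponds to characteristic polynomials. Comparing the two splittings, naturality in the map $\bbS \to \bbS[t] \xrightarrow{t=0} \bbS$ forces the $K(\Perf_{HA})$-summand on the motivic side to match the $K_0(A)$-summand on the Almkvist side, and thus yields the abelian group isomorphism $\pi_0 \Map(\mathbb{W}_0,\Umot(\Perf_{HA})) \cong W_0(A)$.

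It remains to promote this to a ring isomorphism. The ring structure on the left-hand side comes from the counital coassociative cocommutative coalgebra structure on $\mathbb{W}_0$ described in Theorem~\ref{thm:new-intro}, which is inherited from the coalgebra structure on $\Perf_{\bbS[t]}$ induced by $\bbS[t] \xrightarrow{t\mapsto t\wedge t} \bbS[t]\wedge \bbS[t]$. I would trace this comultiplication through the chain of identifications above: for a commutative ring $A$, an endomorphism $\alpha$ of a projective $A$-module $M$ gets sent, via the diagonal induced by $t\mapsto t\wedge t$, to the class of $(M\otimes_A M,\alpha\otimes \alpha)$, which at the level of characteristic polynomials is precisely the Almkvist--Grayson--Witt multiplication $\ast$ characterized by $(1-ar)\ast(1-br)=(1-abr)$. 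Since $W_0(A)$ is generated as an abelian group by elements of the form $1-ar$ after dualizing/expanding, this identifies the two ring structures.

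The main obstacle is the last step: carefully verifying that the coalgebra structure on $\mathbb{W}_0$ coming from $t\mapsto t\wedge t$ corresponds, after dualizing via $\Map(-,\Umot(\Perf_{HA}))$ and after the Almkvist identification, to the Witt product $\ast$ rather than to some other natural pairing. To control this, I would use naturality of the decomposition in ring maps $\bbS[t] \to HA[t]$, reduce to tracking the pairing on generators $(P,\alpha)\otimes(Q,\beta)\mapsto(P\otimes_A Q,\alpha\otimes\beta)$, and then invoke the standard fact that the characteristic polynomial of a tensor product of endomorphisms is precisely the $\ast$-product of the individual characteristic polynomials, which is the defining identity of $W_0(A)$.
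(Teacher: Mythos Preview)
Your proposal follows essentially the same route as the paper's own proof: apply $\Map(-,\Umot(\Perf_{HA}))$ to the coalgebra splitting $\Umot(\Perf_{\bbS[t]})\simeq\Umot(\Perf_{\bbS})\vee\mathbb{W}_0$, invoke the co-representability results \eqref{eq:corep1} and \eqref{eq:corep2} together with Theorem~\ref{thm:main} (or Theorem~\ref{thm:main2intro}) and Almkvist's isomorphism~\eqref{eq:almk}, and then match the two $K_0(A)$-summands via the maps induced by $\bbS\to\bbS[t]\xrightarrow{t=0}\bbS$. The paper is terser about the ring structure, simply noting that because the wedge decomposition is a decomposition of counital coassociative cocommutative coalgebras and $\Map(\mathbb{W}_0,-)$ is lax symmetric monoidal (Proposition~\ref{prop:maplax}), the induced splitting on $\pi_0$ is automatically one of commutative rings; your more explicit verification via $(P,\alpha)\otimes(Q,\beta)\mapsto(P\otimes_A Q,\alpha\otimes\beta)$ and the characteristic-polynomial identity is the concrete unpacking of that same fact. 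One small wording issue: the sentence about ``$(M\otimes_A M,\alpha\otimes\alpha)$'' describes the diagonal rather than the induced multiplication---the correct bilinear pairing is the one you state a few lines later.
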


Isomorphism \eqref{eq:Witt} provides a conceptual characterization of
the rational Witt ring, as the left-hand side is defined solely in
terms of universal properties.  Roughly speaking, by only keeping
track of the ``formal variable'' $t$ of $\bbS[t]$ one passes from
$K$-theory of endomorphisms \eqref{eq:corep2} to rational Witt
vectors \eqref{eq:Witt}.


Finally, making use of Lurie's resolution of Mandell's conjecture
(see \cite[8.1.2.6]{HA}), we obtain the following result: 

\begin{corollary}{(see corollary~\ref{cor:last})}\label{cor:last-intro}
Let $R$ be an $E_n$ ring spectrum. Then the associated rational Witt
ring spectrum $\Map(\mathbb{W}_0,\Umot(\Perf_{R}))$ is an $E_{n-1}$
ring spectrum. 
\end{corollary}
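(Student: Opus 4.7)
The plan is to deduce the $E_{n-1}$-ring structure on $\Map(\mathbb{W}_0,\Umot(\Perf_{R}))$ by composing three symmetric monoidal facts in sequence: Lurie's resolution of Mandell's conjecture \cite[8.1.2.6]{HA}, the monoidality of the universal additive invariant $\Umot$, and the coalgebra structure on $\mathbb{W}_0$ supplied by Theorem \ref{thm:new-intro}.

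First I would apply \cite[8.1.2.6]{HA} to the $E_n$ ring spectrum $R$. This provides an $E_{n-1}$-algebra structure on $\Mod_R$ in the symmetric monoidal $\i$-category $\prcat$ of presentable $\i$-categories. Restricting to compact objects (equivalently, passing to dualizable objects) then transfers this structure to $\Perf_R$ in the symmetric monoidal $\i$-category $(\idemstabcat,\otimes^\vee)$, since the compact-object functor $\prcat\to\idemstabcat$ is itself symmetric monoidal and hence preserves algebras over every $\i$-operad; concretely, the upgrade rests on the identification $\Perf_R\otimes^\vee \Perf_{R'}\simeq \Perf_{R\wedge R'}$.

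Second, by \cite[\S4]{BGT1} the universal additive invariant $\Umot\colon \idemstabcat\to\Motadd$ is symmetric monoidal. Applying it to the $E_{n-1}$-algebra produced above yields an $E_{n-1}$-algebra structure on $\Umot(\Perf_{R})$ in $\Motadd$.

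Third, Theorem \ref{thm:new-intro} equips $\mathbb{W}_0$ with the structure of a counital coassociative cocommutative coalgebra in $\Motadd$. In any closed symmetric monoidal $\i$-category, such a coalgebra $C$ canonically promotes the internal-hom (spectral mapping) functor $\Map(C,-)$ to a lax symmetric monoidal functor: the unit is induced by the counit $C\to\mathbf{1}$, while the lax multiplication
\[
\Map(C,X)\otimes \Map(C,Y)\too \Map(C\otimes C,\,X\otimes Y)\too \Map(C,\,X\otimes Y)
\]
is induced by the comultiplication, with cocommutativity guaranteeing compatibility with the symmetry constraint. Since lax symmetric monoidal functors preserve $E_k$-algebras for every $k$, we conclude that $\Map(\mathbb{W}_0,\Umot(\Perf_{R}))$ inherits an $E_{n-1}$-algebra structure in $\cS_\infty$, i.e., is an $E_{n-1}$ ring spectrum. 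The main obstacle I anticipate is the descent in the first step from $\prcat$ to $\idemstabcat$; once the symmetric monoidality of the compact-object functor is invoked, the remainder of the argument is a purely formal chain of monoidal and lax monoidal functors applied to algebras.
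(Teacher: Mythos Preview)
Your proposal is correct and follows essentially the same route as the paper: Lurie's result \cite[8.1.2.6]{HA} gives $\Mod_R$ an $E_{n-1}$-structure, passage to compact objects transfers this to $\Perf_R$ in $\idemstabcat$, and then the composite of the symmetric monoidal functor $\Umot$ with the lax symmetric monoidal functor $\Map(\mathbb{W}_0,-)$ (the latter coming from the cocommutative coalgebra structure on $\mathbb{W}_0$) yields the $E_{n-1}$-structure on the Witt spectrum; the paper packages your second and third steps into Corollary~\ref{cor:wittlax}. Your anticipated obstacle is handled exactly as the paper does: one works with $\mathrm{Pr}^{\mathrm{L}}_{\mathrm{St},\omega}$ rather than all of $\prcat$, so that the equivalence with $\idemstabcat$ (via \cite[\S5.5.7]{HTT}) is symmetric monoidal --- your parenthetical ``compact $=$ dualizable'' is not needed and not quite accurate in general, but the argument goes through once restricted to the compactly generated setting.
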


Intuitively speaking, corollary~\ref{cor:last-intro} shows us that the 
rational Witt ring spectrum functor $\Map(\mathbb{W}_0,-)$ decreases
commutativity by one.

\subsection*{Acknowledgments}
The authors are very grateful to Clark Barwick, Lars Hesselholt and
Mike Mandell for useful discussions. 

\section{Preliminaries}
\subsection{Notations}
Given an ordinary ring $A$, we will denote by $\Ch(A)$ the category of cochain
complexes of (right) $A$-modules. We will assume that $\Ch(A)$ is
endowed with the projective Quillen model structure;
see \cite[\S2.3]{Hovey}. The associated homotopy category, \ie the
derived category of $A$, will be denoted by $\cD(A)$. We will write
$\mathrm{perf}(A)$ for the category of {\em perfect} complexes of
$A$-modules, \ie the full subcategory of $\Ch(A)$ consisting of those
complexes that become compact in the derived category
$\cD(A)$.  We will write $\Ch^b(-)$ for the full subcategory of
bounded complexes.  Finally, we write $\Perf_{A}$ to denote the
$\i$-category of perfect complexes over $A$ (and more generally the
category of compact modules over a stable $\i$-category).

\subsection{The homotopy theory of a Waldhausen category}\label{ss:howald}

An important source of categories with weak equivalences
is provided by Waldhausen categories. Recall from \cite{Wald}
that this consists of a category $\aC$ endowed with a 
subcategory of weak equivalences $w\aC$ and with a subcategory of
cofibrations $\cof(\aC)$ such that the pushouts along cofibrations
exist, the cobase change of a cofibration is a cofibration, and
pushout along cofibrations are homotopy pushouts.
Following~\cite{BM}, we will impose further hypotheses that afford
control on the underlying homotopy category of a Waldhausen category.
These come in two forms:
\begin{enumerate}
\item We want to ensure that $\aC$ has
a homotopy calculus of left factions (HCLF) in the sense of
Dwyer-Kan~\cite{DwyerKan}.  Categories with weak equivalences that
have homotopy calculi of fractions admit concise and tractable models
for the mapping spaces in the Dwyer-Kan simplicial localization
$L\aC$~\cite{DwyerKan, DwyerKan2}.  Specifically, we can use concise
models of $L^H \aC$ to represent the homotopy types of the mapping
spaces as the nerves of certain categories of
zig-zags~\cite{DwyerKan2}.

For this purpose, we
impose factorization hypotheses.  Recall from \cite[2.6]{BM} the
notion of a {\em Waldhausen category with functorial mapping
cylinders}.  Such cylinders allow a functorial factorization $A \to
Tf \to B$ of every map $f \colon A \to B$. The map $A \to Tf$ is a
cofibration and $Tf \to B$ comes equipped with a natural section
$B \to Tf$ (which is a weak equivalence).  We will say that a
Waldhausen category admits functorial factorization if every map
$f \colon A \to B$ admits a functorial factorization as a cofibration
followed by a weak equivalence; note that this implies functorial mapping
cylinders (by factoring the fold map $A \coprod
A \stackrel{\nabla}{\to} A$). It is 
possible to weaken our factorization hypotheses to remove the
hypotheses of functoriality; see for instance~\cite[\S A]{BM}. In
fact, it is enough to require functorial mapping cylinders for {\em
weak cofibrations}, \ie maps that are equivalent via a zig-zag of weak
equivalences to a cofibration; see~\cite[\S 2.1]{BM}.  Given
functorial mapping cylinders for weak cofibrations, the category $\aC$
has a homotopy calculus of left fractions; see \cite[5.5]{BM}.

\item We want to ensure the weak equivalences are compatible with the homotopy category of $\aC$.  Recall from \cite{DHKS} that a
category $\aC$ with weak equivalences is called {\em DHKS-saturated}
if a map is a weak equivalence if and only if it is an isomorphism in
the homotopy category~\cite{DHKS}.  In a Waldhausen category with
functorial mapping cylinders for weak cofibrations, the property of
being DHKS-saturated is equivalent to the weak equivalences satisfying
the two out of six property~\cite[6.4]{BM}.  In fact, it suffices that
the weak equivalences are closed under retracts (which is easier to
check).
\end{enumerate}
\subsection{$\infty$-categories}\label{s:infty}
Throughout the article we will assume that the reader is familiar with
the basics of the theory of $\infty$-categories.   For technical
convenience, we work in the setting of Joyal's quasicategories, but
nothing about our work depends on any particular model of
$\infty$-categories.  Standard references for quasicategories material
are \cite{HTT,HA}.  We briefly review a few aspects of the theory of
particular note for our treatment.

There are a number of options for producing the ``underlying''
$\i$-category of a category equipped with a notion of ``weak
equivalence''. The most structured setting is that of a simplicial
model category $\aC$, where the $\i$-category can be obtained by
restricting to the full simplicial subcategory $\aC^{\cf}$ of
cofibrant-fibrant objects and then applying the simplicial nerve
functor $\N$.  More generally, if $\aC$ is a category equipped with a
subcategory of weak equivalences $w\aC$, the Dwyer-Kan simplicial
localization $L \aC$ provides a corresponding
simplicial category, and then $\N((L\aC)^{\textrm{fib}})$, where
$(-)^{\textrm{fib}}$ denotes fibrant replacement in simplicial
categories, yields an associated $\i$-category.  
Lurie has given a version of this approach in \cite[\S 1.3.3]{HA}: we
associate to a (not necessarily simplicial) category $\aC$ with weak
equivalences $W$ an $\i$-category $\N(\aC)[W^{-1}]$; when $\aC$ is a
model category, for functoriality reasons it is usually convenient to
restrict to the cofibrant objects $\aC^{\mathrm{c}}$ and consider
$\N(\aC^{\mathrm{c}})[W^{-1}]$.

Recall from \cite[\S2.2]{BGT} the
notions of {\em stable} $\infty$-category and {\em
idempotent-complete} stable $\infty$-category, and
from \cite[\S2.3]{BGT} the notion of {\em compact} $\infty$-category.
Let us denote by $\Cat_\infty$ the $\infty$-category of small
$\infty$-categories, by $\istabcat$ the $\infty$-category of small
stable $\infty$-categories, and by $\idemstabcat$ the
$\infty$-category of small idempotent-complete stable
$\infty$-categories. As explained in \cite[\S2.2-2.3]{BGT}, the inclusions of subcategories $ \idemstabcat\subset\istabcat\subset\Cat_\infty$ admit left adjoints
\[
\Stab\colon \Cat_\infty\too\istabcat,\qquad\Idem\colon \istabcat\too\idemstabcat.
\]
Note that the inclusion $\istabcat\subset\idemstabcat$ is fully faithful, but $\istabcat\subset\Cat_\infty$ is not.

\subsection{Spectral categories}\label{sub:spectral}
Recall from \cite[\S2]{BM2}\cite[Appendix~A]{SS} that a small {\em
spectral category} is a category enriched over the symmetric monoidal
category $\cS$ of symmetric spectra. As explained in \cite[\S3]{BGT} there
is a close connection between spectral categories and
$\infty$-categories. The category of small spectral categories
$\Cat_\cS$ admits a Quillen model structure with weak equivalencse the
Morita equivalences \cite{Spectral}, and the $\infty$-category
$\idemstabcat$ is equivalent to the $\infty$-category associated to
this model category; see \cite[3.20-3.20]{BGT}. Hence, in order to
simplify the exposition, we will make no notational distinction
between a spectral category (for instance a ring spectrum) and the
associated $\infty$-category. 

\subsection{Algebraic $K$-theory of exact and Waldhausen
categories}

We assume the reader has some familiarity with Quillen's $K$-theory of
exact categories~\cite[\S2]{Quillen} and with Waldhausen's $K$-theory
of categories with cofibrations and weak equivalences (Waldhausen
categories)~\cite[\S 1]{Wald}.  For each of these classes of input
data, we can associate a connective algebraic $K$-theory spectrum.
Given an exact category, we can regard
it as a Waldhausen category with weak equivalences the isomorphisms
and cofibrations the admissible monomorphisms; see~\cite[\S 1.9]{Wald}
for the agreement between the two possible constructions of $K$-theory
in this situation.

\subsection{Algebraic $K$-theory of $\i$-categories}\label{s:kinf}

In this section we quickly review the analogue of the definition of
Waldhausen $K$-theory in the setting of $\i$-categories.  

\begin{definition}\label{defn:infcof}
Let $\aC$ be a pointed $\i$-category.  We say that $\aC$ is an
$\i$-category with cofibrations if we have the additional data of a
subcategory $\cof(\aC)$ such that 
\begin{enumerate}
\item for any object $x$ in $\aC$, the unique map $* \to x$ is a
cofibration,
\item the subcategory $\cof(\aC)$ contains all the equivalences,
\item pushouts along cofibrations exist, and the cobase change of a
cofibration is again a cofibration. 
\end{enumerate}
We refer to the maps in $\cof(\aC)$ as cofibrations.
\end{definition}

Such an $\i$-category with cofibrations is referred to as a Waldhausen
$\i$-category in~\cite{Barwick,Fiore}.  Specifying a subcategory of
cofibrations is a way of specifying which maps have homotopy cofibers.
In many natural examples, all maps are cofibrations.  For instance,
any pointed category with pushouts admits the structure of an
$\i$-category with cofibrations where all maps are cofibrations.

There is a close connection between cofibrations in Waldhausen
$\i$-categories and the weak cofibrations studied in ~\cite{BM} (and
reviewed in \S~\ref{ss:howald} above).  More precisely, we have
the following consistency check: 

\begin{lemma}
Let $\aC$ be a Waldhausen category with weak equivalences $w\aC$ and
cofibrations $\cof(\aC)$ that satisfy the hypothesis of factorization
of weak cofibrations. Let us denote by $\cof(\N\aC[W^{-1}])\subseteq\N\aC[W^{-1}]$ the subcategory of those arrows which are equivalent to the image of an
arrow in $\N\cof(\aC)\subseteq\N\aC$. Under these assumptions and notations, $(\N\aC[W^{-1}],\cof(\N\aC[W^{-1}]))$ is an $\i$-category with
cofibrations.
\end{lemma}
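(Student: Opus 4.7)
The plan is to verify in turn the defining conditions of Definition \ref{defn:infcof} for the pair $(\N\aC[W^{-1}], \cof(\N\aC[W^{-1}]))$, exploiting the homotopy calculus of left fractions (HCLF) on $\aC$ afforded by the hypothesis of functorial mapping cylinders for weak cofibrations.

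Pointedness is inherited from $\aC$: the zero object of $\aC$ remains a zero object after inverting $W$, since its mapping spaces into and out of any other object, computed via the HCLF as nerves of categories of left fractions, are contractible (the category of weak equivalences into $x$ has the identity of $x$ as a final object, and dually). I would also check that $\cof(\N\aC[W^{-1}])$ is a genuine subcategory: since $\cof(\aC)$ is closed under composition in $\aC$, and two arrows each equivalent to images of cofibrations can be composed by using the cylinder to strictify the connecting equivalence (turning the composite equivalence into an honest cofibration), closure under composition transfers to the localization. Condition (1) is then immediate, since the canonical arrow $\ast \to x$ lies in $\cof(\aC)$ by the Waldhausen axioms. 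Condition (2) reduces to the observation that any equivalence $f \colon x \to y$ is equivalent, as an arrow, to the identity $\mathrm{id}_x$ (the obvious square with top edge $\mathrm{id}_x$, bottom edge $f$, left vertical $\mathrm{id}_x$, and right vertical $f$ has both verticals equivalences), and $\mathrm{id}_x$ is a cofibration in $\aC$.

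Condition (3) is the substantive step. Given a cospan $b \xleftarrow{i} a \xrightarrow{g} c$ in $\N\aC[W^{-1}]$ with $i \in \cof(\N\aC[W^{-1}])$, I would first strictify: by definition $i$ is equivalent to the image of a cofibration $\tilde{\imath} \in \cof(\aC)$, and the HCLF together with the functorial cylinder allows one to represent $g$ by an arrow in $\aC$ sharing the source of $\tilde{\imath}$, up to equivalence of cospans in $\N\aC[W^{-1}]$. The Waldhausen pushout $b \cup_a c$ in $\aC$ then exists, is a homotopy pushout by hypothesis, and therefore models the $\infty$-categorical pushout in $\N\aC[W^{-1}]$; the cobase change is a cofibration in $\aC$, hence lies in $\cof(\N\aC[W^{-1}])$.

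The main obstacle is precisely this strictification step: aligning, up to equivalence of cospan-shaped diagrams in $\N\aC[W^{-1}]$, the two arrows out of $a$ so that they arise from a strict cospan in $\aC$ whose left leg is an actual cofibration, and then checking that the resulting Waldhausen pushout really computes the $\infty$-categorical pushout. The former leverages the coherence inherent in the HCLF (applied simultaneously to both legs), while the latter is a standard fact about Dwyer-Kan localizations of categories with HCLF in which cofibrations have homotopy cofibers.
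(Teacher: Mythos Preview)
Your approach is correct and aligns with the paper's, though you are considerably more detailed. The paper's proof is essentially two sentences: it cites \cite[6.2]{BM} to obtain the homotopy calculus of left fractions from the factorization hypothesis, and then asserts that this implies pushouts along cofibrations in $\aC$ compute pushouts in the underlying $\infty$-category $\N\aC[W^{-1}]$. The paper does not spell out the verification of conditions (1) and (2), nor the strictification step you identify as the main obstacle in condition (3); you are right that this step is where the content lies, and your sketch of how HCLF handles it is in the intended spirit.
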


\begin{proof}
By \cite[6.2]{BM}, the factorization hypothesis implies that $\aC$ has a homotopy
calculus of left fractions.  Then, using the universal property of the
pushout in an $\i$-category, we can conclude that pushouts along
cofibrations compute pushouts in the underlying $\i$-category. 
\end{proof}

\begin{remark}
In particular, a Waldhausen category with factorization has an
underlying $\i$-category with cofibrations where all maps are
cofibrations (and all pushouts exist).
\end{remark}

Generalizing~\cite[1.2.2.2]{HA}, we have the following version of the
$\Sdot$ construction:

\begin{definition}\label{defn:gap}
Let $(\aC, \cof(\aC))$ be an $\i$-category with cofibrations.  Denote by
$\Gap([n],\aC, \cof(\aC))$ the full subcategory of  
$\Fun(\N(\Ar[n]),\aC)$ spanned by the functors 
$\N(\Ar[n])\to\aC$ such that, for each $i\in I$, $F(i,i)$ is a zero
object of $\aC$, $F(i,j) \to F(i,k)$ is a cofibration for $i \leq
j \leq k$, and for each $i<j<k$, the following square is cocartesian 
\[
\xymatrix{
F(i,j)\ar[r]\ar[d] & F(i,k)\ar[d]\\
F(j,j)\ar[r]     & F(j,k)\,.}
\]
\end{definition}
Following \cite[1.2.2.5]{HA}, we define a simplicial $\i$-category 
$\iSdot \aC$ by the rule $\iSdot[n] \C=\Gap([n],\aC, \cof(\aC))$.
Applying passage to the largest Kan complex levelwise, we obtain a
simplicial space $\lkan{\iSdot \aC}$.  Then $\Omega
|\lkan{\iSdot \aC}|$ is the $\i$-categorical version of Waldhausen's
$K$-theory space.  Furthermore, for each $n$,
$\Gap([n],\aC, \cof(\aC))$ is itself equipped with a suitable
subcategory of cofibrations: we can can iterate
this procedure.  Since $\Gap([0],\aC, \cof(\aC))$ is contractible (with preferred
basepoint given by the point in $\aC$) and $\Gap([1],\aC, \cof(\aC))$ is
equivalent to $\aC$, there is a natural map 
\[
S^1 \sma \lkan{\aC} \to |\lkan{\iSdot \aC}|
\]
given by the inclusion into the $1$-skeleton.  Therefore, the spaces
$|\lkan{(\iSdot)^{n}(\aC)}|$ assemble to form a spectrum $K(\aC)$,
which is evidently the $\i$-categorical analogue of Waldhausen's $K$-theory
spectrum.  Furthermore, it is natural in functors of $\i$-categories
with cofibrations, i.e., functors which preserve zero objects,
cofibrations, and pushouts along cofibrations.

The following results connect this definition to the usual
definition.  This is a generalization of the comparison
of~\cite[7.12]{BGT}, which handles the case when all maps are
cofibrations.

\begin{theorem}\label{thm:comp}
Let $\aC$ be a Waldhausen category with weak equivalences $w\aC$ and
cofibrations $\cof(\aC)$.  Then there is a natural zig-zag of
equivalences connecting $K(\aC)$ and $K(\N \aC[W^{-1}])$.
\end{theorem}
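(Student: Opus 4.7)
My approach is to compare the Waldhausen $\Sdot$-construction and the $\infty$-categorical $\iSdot$-construction level by level. By construction, $K(\aC)$ is extracted from the simplicial Waldhausen category $\Sdot\aC$ (with weak equivalences and cofibrations inherited levelwise) while $K(\N\aC[W^{-1}])$ is extracted from the simplicial $\infty$-category $\iSdot(\N\aC[W^{-1}])$. The plan is to produce a natural, levelwise comparison of these simplicial objects, show it is a levelwise equivalence, and then conclude by applying $\lkan{-}$ and geometric realization. This strategy mirrors and extends the case of \cite[7.12]{BGT}, which handles the situation in which all maps are cofibrations.

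First, I would construct for each $n \geq 0$ a comparison functor $\varphi_n \colon \N(\Sdot[n]\aC)[W^{-1}] \to \iSdot[n](\N\aC[W^{-1}])$. An object of $\Sdot[n]\aC$ is by definition a gap diagram valued in $\aC$; by the lemma preceding the theorem (which rests on the factorization hypothesis for weak cofibrations and hence on the HCLF property), cofibrations and pushout squares in $\aC$ descend to cofibrations and cocartesian squares in $\N\aC[W^{-1}]$. The universal property of the Dwyer-Kan simplicial localization then yields $\varphi_n$, and by naturality of all constructions these maps are compatible with the simplicial structure maps in $[n] \in \DDelta^{\op}$.

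Next, I would prove that each $\varphi_n$ is an equivalence of $\infty$-categories. For fully faithfulness, one uses that both mapping spaces admit concrete hammock-localization models: on the Waldhausen side by \cite{DwyerKan2} applied to $\Sdot[n]\aC$ (which inherits the HCLF property from $\aC$), and on the target side by the standard computation in a functor $\infty$-category. The main obstacle is essential surjectivity: given a gap diagram $F \colon \N(\Ar[n]) \to \N\aC[W^{-1}]$, one must rectify it to a strict gap diagram in $\Sdot[n]\aC$. I would carry this out by induction on $n$, replacing each map $F(i,j) \to F(i,k)$ by a strict cofibration via the functorial factorization, and then using strict pushouts in $\aC$ to produce the remaining entries; the cocartesian property on the nose in $\aC$ yields the corresponding property in $\N\aC[W^{-1}]$ because the HCLF implies that pushouts along cofibrations compute homotopy pushouts.

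Finally, the levelwise equivalences $\varphi_n$ assemble into an equivalence of simplicial $\infty$-categories. Applying the maximal sub-Kan complex functor $\lkan{-}$ levelwise and taking geometric realization produces an equivalence of spaces, and iterating the $\iSdot$-construction in the same manner yields an equivalence of connective spectra, since the entire comparison is natural in $\infty$-categories with cofibrations and preserves the data needed for the Waldhausen delooping. The zig-zag in the statement arises from composing this comparison with the standard identification of Waldhausen's $K$-theory space $|\lkan{\N w \Sdot\aC}|$ with $|\lkan{\N \Sdot\aC[W^{-1}]}|$, which holds for any Waldhausen category admitting functorial factorization of weak cofibrations.
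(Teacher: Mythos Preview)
Your overall strategy---compare $\Sdot\aC$ and $\iSdot(\N\aC[W^{-1}])$ level by level, then pass to maximal subgroupoids and realize---is exactly the strategy underlying the results the paper cites, so in spirit you are reproducing the paper's proof rather than giving a different one.

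The place where your sketch is genuinely incomplete is the essential surjectivity of $\varphi_n$. You propose to rectify a gap diagram $F\colon\N(\Ar[n])\to\N\aC[W^{-1}]$ by ``induction on $n$, replacing each map by a strict cofibration via functorial factorization and then taking strict pushouts.'' But an object of $\iSdot[n](\N\aC[W^{-1}])$ is a functor of $\infty$-categories, so it carries coherent higher composition data, not merely a homotopy-commutative grid; factoring the individual edges and gluing pushouts does not by itself produce a point of $\N(\Sdot[n]\aC)[W^{-1}]$ equivalent to $F$ as an $\infty$-functor. Even for the underlying filtration $F(0,0)\to\cdots\to F(0,n)$, rectifying a coherent $n$-string in a localization to a strict $n$-string in $\aC$ is already a nontrivial statement, and your inductive step does not address how the coherences line up.

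This is precisely why the paper's proof invokes \cite[7.7]{BGT} in addition to the rectification of \cite[7.11]{BGT}: one first embeds the Waldhausen category (with functorial factorization) into the cofibrant objects of a combinatorial model category, and in that setting one has access to genuine rectification theorems (of the type \cite[1.3.4.25]{HA}) identifying $\N(\Fun(I,\aA^{\mathrm{c}}))[W^{-1}]$ with $\Fun(\N I,\N\aA^{\mathrm{c}}[W^{-1}])$ for a small indexing category $I$. That is what makes the essential surjectivity step go through cleanly. Your direct argument could perhaps be completed, but it would require a careful treatment of the higher simplices of $\N(\Ar[n])$, and it is not clear this is any easier than the model-categorical detour.
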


\begin{proof}
This follows from the rectification technique of~\cite[7.11]{BGT}
coupled with the work of~\cite[7.7]{BGT}.
\end{proof}

\subsection{Grothendieck group of $\infty$-categories}

In this last subsection we give an explicit description of the
Grothendieck group ($K_0$) of an $\infty$-category $\aC$ with
cofibrations.  Observe that we can describe $\iSdot[2] \aC$ as the
full subcategory of the $\i$-category of $2$-simplices
$\sigma\colon\Delta^2\to\aC$ with the property that the composite
\[
\Delta^{\{0,2\}}\too\Delta^2\too\aC
\]
is equivalent to zero and the map specified by $\Delta^{\{0,1\}}$ is a
cofibration.
Moreover, definition~\ref{defn:infcof} implies that the subcategory
$\iSdot[2](\aC)$ always contains the ``split exact sequence'' $A\to
A\oplus B\to B$ corresponding to each pair of objects $A$ and $B$ of $\aC$. 

\begin{lemma}
The abelian group $K_0(\aC)$ can be described as the cokernel
\begin{equation}\label{eqn:K0}
\bigoplus_{\pi_0 \iSdot[2](\aC)^\simeq}\bZ\too\bigoplus_{\pi_0\aC^\simeq}\bZ\too
K_0(\aC) 
\end{equation}
of the map which, on the component corresponding to the equivalence
class of the exact sequence $[A\to B\to C]\in\pi_0 S_2(\aC)$, sends
$1\in\bZ$ to the element $[A\oplus C]-[B]\in\bigoplus_{\pi_0\aC}\bZ$.
Here $\pi_0\aC^\simeq$ denotes the set of equivalence classes of
objects of the $\i$-category $\aC$, which is to say the set of
connected components of the underlying $\i$-groupoid $\aC^\simeq$ of
$\aC$.
\end{lemma}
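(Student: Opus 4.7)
The plan is to identify the cokernel in \eqref{eqn:K0} with $\pi_0 K(\aC) = \pi_1 |\lkan{\iSdot\aC}|$ by computing the latter via the 2-skeleton of the simplicial space $\lkan{\iSdot\aC}$. Since $\iSdot[0]\aC$ is contractible and $\iSdot[1]\aC\simeq\aC$, a standard van Kampen-type computation of $\pi_1$ of the geometric realization of a simplicial space shows that $\pi_1|\lkan{\iSdot\aC}|$ is the quotient of the free abelian group on $\pi_0\aC^\simeq$ (with degenerate edges set to zero, so that the class of a zero object vanishes) by the relations coming from $\pi_0\iSdot[2](\aC)^\simeq$: a 2-simplex with vertices $F(0,0)\simeq 0$, $F(0,1)$, $F(0,2)$, $F(1,2)$, and edges corresponding to a cofiber sequence $[A\to B\to C]$ (with $F(0,1)\simeq A$, $F(0,2)\simeq B$, $F(1,2)\simeq C$) contributes the boundary relation $d_0[\sigma]-d_1[\sigma]+d_2[\sigma]=[C]-[B]+[A]=0$ in the abelianized $\pi_1$.

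Next I would verify that this presentation agrees with the one in \eqref{eqn:K0}. In both cases the generators are indexed by $\pi_0\aC^\simeq$. The standard relations are $[B]=[A]+[C]$ for each $[A\to B\to C]\in\pi_0\iSdot[2](\aC)^\simeq$, whereas \eqref{eqn:K0} imposes $[B]=[A\oplus C]$. Using that every pair of objects $A,C\in\aC$ determines a split cofiber sequence $A\to A\oplus C\to C$ lying in $\iSdot[2](\aC)$, the standard relations already imply $[A\oplus C]=[A]+[C]$; combining this with $[B]=[A]+[C]$ gives $[B]=[A\oplus C]$. Conversely, the map in \eqref{eqn:K0} applied to $[A\to B\to C]$ gives $[A\oplus C]-[B]$, and applied to the split sequence $A\to A\oplus C\to C$ gives $[A\oplus C]-[A\oplus C]=0$; using additivity on the split sequences one recovers $[A\oplus C]=[A]+[C]$, and hence $[B]=[A]+[C]$. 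Therefore the two quotients coincide.

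The main obstacle is step one, the identification of $\pi_1$ of $|\lkan{\iSdot\aC}|$ with the claimed presentation by generators and relations. This is where one must argue that the 2-skeleton suffices, using that $|\lkan{\iSdot\aC}|$ is connected (since $\iSdot[0]\aC$ is a point) and that all higher simplicial relations are consequences of those coming from $\iSdot[2]$ after abelianization (which is automatic since we are taking $\pi_1$ of a loop space, hence an abelian group). Given Theorem~\ref{thm:comp}, one could alternatively reduce to the Waldhausen case by choosing a model and invoking Waldhausen's classical computation of $K_0$, but carrying out the $\i$-categorical argument directly is cleaner and parallels the exposition in~\cite[\S 1.2.2]{HA}.
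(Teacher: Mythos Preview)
Your approach---computing $\pi_1|\lkan{\iSdot\aC}|$ from the 2-skeleton of the simplicial space and reading off generators from $\pi_0\aC^\simeq$ and relations from $\pi_0\iSdot[2](\aC)^\simeq$---is exactly the classical Waldhausen argument the paper is gesturing at; the paper's own proof is the single sentence ``The argument is similar to the classical analysis of $K_0$ in the Waldhausen context,'' so your outline is a faithful elaboration rather than a different route.

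One point deserves care. You correctly notice that the boundary of a 2-simplex gives the relation $[A]+[C]=[B]$, whereas the lemma literally writes $[A\oplus C]-[B]$ with $[A\oplus C]$ a single generator of $\bigoplus_{\pi_0\aC^\simeq}\bZ$. Your forward implication (standard relations imply the lemma's relations) is fine, but the converse you sketch does not go through: applied to the split sequence $A\to A\oplus C\to C$, the lemma's map yields the tautology $[A\oplus C]-[A\oplus C]=0$, so you cannot ``recover $[A\oplus C]=[A]+[C]$'' from it. Indeed, taken literally the two presentations are not equivalent (in a toy example with only split sequences the lemma's relations are all trivial and the quotient is too large). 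The resolution is that the paper is using $[A\oplus C]$ as shorthand for $[A]+[C]$ in the free abelian group; this is consistent with how the presentation is actually used later (e.g.\ in the proofs of Theorems~\ref{thm:computation} and~\ref{thm:pizero1}, where relations $[L]-[M]+[N]=0$ are invoked). With that reading your second paragraph is unnecessary, and your first paragraph is the whole proof.
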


\begin{proof}
The argument is similar to the classical analysis of $K_0$ in the
Waldhausen context.
\end{proof}

\begin{example}
If $\aC$ is a stable $\i$-category, viewed as an $\i$-category with cofibrations by taking all arrows to be
cofibrations, we see that $\iSdot[2](\aC)$ is the full subcategory of
$\Fun(\Delta^2,\aC)$ consisting of the (co)fiber sequences; that is,
those $\sigma\colon \Delta^2\to\aC$ which extend to a cocartesian
square of the form 
\[
\xymatrix{
A \ar[r]\ar[d]\ar[rd]^\sigma & B\ar[d]\\
0\ar[r] & C}
\]
where $0$ denotes a zero object of $\aC$.
Equivalently, these are the $2$-simplices which give rise to
distinguished triangles $A\to B\to C$ in the underlying triangulated
homotopy category. 
\end{example}
\section{$K$-theory of endomorphisms}\label{sec:end}
In this section we extend the $K$-theory of endomorphisms from
ordinary rings to stable $\infty$-categories. The main results are
theorems~\ref{thm:co-rep}-\ref{thm:agreement2} and
proposition~\ref{prop:agreement1}. 

\begin{definition}
Let us denote by $\cD\simeq\underline{\mathbb{N}}$ the category with
one object and endomorphism monoid $\mathbb{N}$.  The category of
endomorphisms in a category $\cC$ is the functor category
$\Fun(\cD,\cC)$; that is, it has 
as objects the pairs $(x,\alpha)$ where $x \in \cC$ and
$\alpha \colon x \to x$ an endomorphism, and morphisms
$(x,\alpha) \to (x', \alpha')$ the maps $f \colon x \to x'$ such
that $f\alpha=\alpha'f$. Note that the unique functor
$*\to\underline{\mathbb{N}}$ induces a forgetful functor
$\End(\cC) \to \cC$.
\end{definition}

\subsection{Exact categories}\label{sub:AG}
Given an exact category $\cC$ in the sense of
Quillen~\cite[\S2]{Quillen}, the category $\End(\cC)$ inherits an
exact structure by declaring a sequence to be exact if its image under
$\End(\cC) \to \cC$ is exact.  The endomorphism $K$-theory of $\cC$ is
then defined as the connective algebraic $K$-theory spectrum
$\KEnd(\cC)$ of the exact category $\End(\cC)$. Clearly this
construction is functorial in exact functors.

Given an ordinary ring $A$, let $\mathbf{P}(A)$ denote the exact
category of finitely-generated projective (right) $A$-modules.
Following Almkvist~\cite{Almkvist,Almkvist1} and
Grayson~\cite{Grayson}, the $K$-theory of endomorphisms of $A$ is
defined as the $K$-groups associated to the spectrum
$\KEnd(\mathbf{P}(A))$.
\subsection{Waldhausen categories}

We now extend the above construction to the setting of Waldhausen
categories.

\begin{lemma}\label{lem:Wald}
The category
$\mathrm{End}(\cC)$ of endomorphisms in a Waldhausen category $\cC$ carries a canonical Waldhausen structure. A morphism is a cofibration (resp. a
weak equivalence) in $\End(\cC)$ if its image under
$\End(\cC) \to \cC$ is a cofibration (resp. a weak equivalence) in
$\cC$. Furthermore, if $\aC$ admits functorial factorization, then so does
$\End(\aC)$.
\end{lemma}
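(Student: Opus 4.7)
The plan is to observe that $\End(\cC)=\Fun(\underline{\mathbb{N}},\cC)$ is a functor category, and then to transport the Waldhausen structure pointwise along the forgetful functor $U\colon\End(\cC)\to\cC$ evaluating at the unique object of $\underline{\mathbb{N}}$. Since limits and colimits in functor categories are computed objectwise whenever they exist in the target, the nontrivial work amounts to showing that the required pushouts exist and that the resulting data satisfies Waldhausen's axioms; but in fact everything is forced by $U$.

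Concretely, given a cofibration $f\colon(x,\alpha)\to(y,\beta)$ and a morphism $g\colon(x,\alpha)\to(x',\alpha')$ in $\End(\cC)$, I would form the pushout $z=y\sqcup_x x'$ in $\cC$ along $f$ (which exists because $f$ is a cofibration in $\cC$), with coprojections $i\colon y\to z$ and $j\colon x'\to z$. The two maps $i\circ\beta\colon y\to z$ and $j\circ\alpha'\colon x'\to z$ agree after precomposition with the structure maps out of $x$ because
\[
i\beta f = i f \alpha = j g \alpha = j \alpha' g,
\]
using that $f$ and $g$ are morphisms of endomorphisms. The universal property of the pushout then produces a unique $\gamma\colon z\to z$, and one checks directly that $(z,\gamma)$ together with the evident maps is the pushout in $\End(\cC)$. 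The coprojection $(x',\alpha')\to(z,\gamma)$ evaluates under $U$ to the cobase change $x'\to z$ of $f$ in $\cC$, which is a cofibration; hence cofibrations in $\End(\cC)$ are stable under cobase change. The pair $(0,\mathrm{id}_0)$ serves as a zero object, and the remaining Waldhausen axioms (identities are cofibrations, cofibrations and weak equivalences are closed under composition) follow immediately from the corresponding axioms in $\cC$ since they are preserved and reflected by $U$.

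For the homotopy pushout condition, a pushout square along a cofibration in $\End(\cC)$ is sent by $U$ to a pushout square along a cofibration in $\cC$, and hence to a homotopy pushout there. Since weak equivalences in $\End(\cC)$ are by definition those morphisms detected as weak equivalences by $U$, the gluing lemma in $\cC$ transfers directly to $\End(\cC)$. For the final clause, given a morphism $h\colon(x,\alpha)\to(y,\beta)$ and a functorial factorization $T$ in $\cC$, apply $T$ to the square
\[
\xymatrix{
x \ar[r]^-h \ar[d]_-\alpha & y \ar[d]^-\beta \\
x \ar[r]_-h & y
}
\]
regarded as a morphism in $\mathrm{Ar}(\cC)$. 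Functoriality of $T$ produces an endomorphism $T(\alpha,\beta)\colon T(h)\to T(h)$, and the factorization $(x,\alpha)\to(T(h),T(\alpha,\beta))\to(y,\beta)$ is a cofibration followed by a weak equivalence by the corresponding properties in $\cC$; functoriality of this whole construction in $h$ is inherited from $T$.

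The only potential obstacle is the homotopy pushout axiom, since one must be certain that the pointwise criterion is adequate; however, because $U$ both detects and creates weak equivalences and the relevant pushouts, this reduces cleanly to the corresponding statement in $\cC$. Everything else is essentially formal from the description of $\End(\cC)$ as a functor category.
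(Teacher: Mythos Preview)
Your proof is correct and follows essentially the same approach as the paper: the paper's argument is a three-sentence sketch noting that pushouts along cofibrations in $\End(\cC)$ are computed in $\cC$ with the induced endomorphism on the pushout, from which the gluing axiom and the remaining properties follow. You have simply unpacked this in detail, including the explicit construction of $\gamma$ via the universal property and the verification of functorial factorization, which the paper leaves implicit.
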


\begin{proof}
Pushouts along cofibrations in $\End(\cC)$ are computed in 
$\cC$, using the induced endomorphism on the pushout.  Hence,
pushouts along cofibrations exist in $\End(\cC)$ and the gluing axiom
holds.  The remaining properties are clear.
\end{proof}

Using Lemma~\ref{lem:Wald}, one can associate (as in the case of
exact categories) to every Waldhausen category $\cC$ a well-defined
algebraic $K$-theory spectrum $\KEnd(\cC)$.  This construction is
clearly functorial in exact functors of Waldhausen categories. 

\begin{example}\label{ex:complexes}
A motivating example is the case in which $\aC=\perf(A)$ is the category of perfect complexes over an ordinary ring $A$. The weak 
equivalences are the quasi-isomorphisms and the cofibrations are the
morphisms which admit retractions as morphisms of graded (right)
$A$-modules (i.e., the degree-wise split monomorphisms).  We have an
equivalence of $\infty$-categories 
\[
\mathrm{N}(\perf(A))[W^{-1}]\simeq\Perf_{HA},
\]
where $HA$ is the associated ring spectrum.  The category
$\End(\perf(A))$ admits an algebraic description.  Specifically,
$\End(\perf(A))$ is the category of those complexes of $A[t]$-modules
that are perfect as underlying complexes of $A$-modules.  Note that
$\End(\perf(A)) \neq \perf(A[t])$ since perfect complexes of
$A[t]$-modules tend not to be perfect as complexes of $A$-modules.
\end{example}

\begin{example}\label{ex:freeassoc}
Example~\ref{ex:complexes} can be generalized to the case where $\cC$
is the category $\Perf_{R}$ of perfect complexes over a ring spectrum
$R$. When $R$ is the Eilenberg-MacLane spectrum $HA$ of an ordinary
ring $A$, we recover example~\ref{ex:complexes}.  Note that in this
case $\End(\Perf_{R})$ also admits an algebraic
description. Specifically, an $R$-module $M$ endowed with an
endomorphism is precisely the same data as a module over $R[\bN]:=
R \sma\Sigma^\infty_+ \bN$. This follows from the fact that
$R \sma\Sigma^\infty_+\bN$ is the free $R$-algebra on one generator,
so that a map to $\End(M)$ is (by adjunction) just a map of
$R$-modules $M \to M$ (see~\cite[II.4.4]{EKMM}
or~\cite[3.10]{Schwede}).
\end{example}
\subsection{Endomorphisms of $\infty$-categories and
co-representability}\label{sec:corep}

The category $\underline{\bbN}$ with one object and endomorphism
monoid $\bbN$ (under addition) is freely generated by a single
nonidentity arrow.  As a consequence, we obtain a well-defined map
$\Delta^1/\partial\Delta^1\to\N(\underline{\bbN})$ which is a
categorical equivalence of simplicial sets.

\begin{definition}\label{def:K-th-endo}
Let $\aC \in \Cat_\infty$ be an $\infty$-category.
\begin{itemize}
\item[(1)] The $\i$-category $\mathrm{End}(\aC)$ of endomorphisms in $\cC$ is
the functor $\i$-category $\End(\aC):=\Fun(\Delta^1/\partial\Delta^1,\aC)$.
Note that, as colimits in functor $\i$-categories are computed
pointwise, $\End(\aC)$ has finite colimits if and only if $\aC$ has
finite colimits. Moreover, if $\cC$ is stable then $\mathrm{End}(\cC)$
is also stable~\cite[1.1.3.1]{HA}.
\item[(2)] If $\aC$ has finite colimits and a zero object, then the
{\em $K$-theory of endomorphisms of $\aC$} is defined as the
(connective) spectrum $\KEnd(\aC)$; see \S\ref{s:kinf}.
\end{itemize}
\end{definition}

Clearly, definition \ref{def:K-th-endo}(ii) is functorial in $\cC$.
Hence, we obtain a well-defined $K$-theory of endomorphisms functor 
\begin{equation}\label{functor:End}
\KEnd\colon \idemstabcat \too \Spt_\infty
\end{equation}
with values in the $\infty$-category of symmetric spectra.  In
this section we prove that \eqref{functor:End} not only descends to 
the category $\Motadd$ but moreover that it becomes co-representable;
see Theorem~\ref{thm:co-rep} below.

\begin{notation}
We denote by $\bbS[t]$ the tensor algebra on the sphere spectrum $\bbS$;
see \cite[Example~3.10]{Schwede}.
\end{notation}

\begin{lemma}\label{cor:compact}
The $\infty$-category $\Perf_{\bbS[t]} \in \idemstabcat$ is compact.
\end{lemma}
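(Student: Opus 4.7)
The plan is to prove the compactness of $\Perf_{\bbS[t]}$ by identifying the functor co-represented by it in concrete terms. First, I would establish that for every $\cC \in \idemstabcat$ there is a natural equivalence
\[
\Map_{\idemstabcat}(\Perf_{\bbS[t]}, \cC) \htp \End(\cC)^\simeq,
\]
where $(-)^\simeq$ denotes the underlying $\i$-groupoid. This is the $\bbS[t]$-analogue of the familiar equivalence $\Map_{\idemstabcat}(\Perf_{\bbS}, \cC) \htp \cC^\simeq$ recorded in \cite[\S2]{BGT}; the key input is the universal property of $\bbS[t] = \bbS[\bN]$ as the free associative ring spectrum on one generator. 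Concretely, as discussed in example~\ref{ex:freeassoc}, $\bbS[t]$-modules in $\Ind(\cC)$ are equivalent to objects of $\Ind(\cC)$ equipped with an endomorphism, and imposing the compactness condition coming from the characterization of $\idemstabcat$-morphisms as colimit- and compact-object-preserving functors between $\Ind$-completions yields the desired identification with $\End(\cC)^\simeq$.

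With this identification in hand, compactness of $\Perf_{\bbS[t]}$ reduces to showing that the functor $\End(-)^\simeq \colon \idemstabcat \to \Spaces$ preserves filtered colimits. This decomposes into three ingredients: (i) filtered colimits in $\idemstabcat$ are computed at the level of underlying $\i$-categories, since stability and idempotent-completeness are detected on finite diagrams (cf.\ \cite[\S2]{BGT}); (ii) the endofunctor $\Fun(\Delta^1/\partial\Delta^1,-)$ of $\Cat_\i$ preserves filtered colimits, because $\Delta^1/\partial\Delta^1$ is a finite simplicial set and hence a compact object of $\Cat_\i$; and (iii) the underlying $\i$-groupoid functor $(-)^\simeq \colon \Cat_\i \to \Spaces$ preserves filtered colimits.

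The main obstacle is step one, namely the identification of the mapping space. Subtlety arises because morphisms in $\idemstabcat$ are not arbitrary $\i$-functors, but exact functors corresponding under $\Ind$ to colimit- and compact-object-preserving functors between compactly generated stable presentable $\i$-categories. Care is required to package the universal property of $\bbS[t]$ so that it respects this refined notion of morphism; concretely, one passes to $\Ind$-completions, invokes the universal property of the free $E_1$-algebra $\bbS[t]$ to identify $\Fun^\L(\Mod_{\bbS[t]}, \Ind(\cC))$ with $\End(\Ind(\cC))$, and then restricts to those functors preserving compact objects, which amounts to requiring that the distinguished object land in $\cC \subseteq \Ind(\cC)$. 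Combined with step two, this yields the claim.
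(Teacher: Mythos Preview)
Your argument is correct, but it proceeds along a genuinely different route from the paper's proof. The paper's argument is a two-line reduction: passing through the equivalence $\idemstabcat\simeq\mathrm{Pr}^{\mathrm L}_{\mathrm{St},\omega}$, it invokes a result of Antieau--Gepner to the effect that $\Mod_R$ is a compact object of $\mathrm{Pr}^{\mathrm L}_{\mathrm{St},\omega}$ whenever $R$ is a compact associative $\bbS$-algebra, and then observes that $\bbS[t]$, being free on one generator, is certainly compact as an algebra. No explicit identification of the functor co-represented by $\Perf_{\bbS[t]}$ is needed.

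Your approach, by contrast, front-loads the identification $\Map_{\idemstabcat}(\Perf_{\bbS[t]},\cC)\simeq\End(\cC)^{\simeq}$ and then checks preservation of filtered colimits directly via the compactness of $\Delta^1/\partial\Delta^1$ in $\Cat_\infty$. This is precisely the content the paper establishes slightly later as Proposition~\ref{prop:endident} (at the level of $\i$-categories rather than mapping spaces), and your steps (i)--(iii) are all correct. The trade-off: your argument is more self-contained and conceptually transparent about \emph{why} $\Perf_{\bbS[t]}$ is compact (namely, because endomorphisms are indexed by a finite diagram), whereas the paper's argument is shorter and keeps the compactness lemma logically independent of the $\End$-identification, deferring the latter to where it is needed for the co-representability theorem.
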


\begin{proof}
Recall that there is an equivalence
$\idemstabcat\simeq\mathrm{Pr^L_{St,\omega}}$ induced by passage to
the $\Ind$-category~\cite[\S 5.5.7]{HTT}, where
$\mathrm{Pr}^L_{St,\omega}$ is the 
$\i$-category of compactly-generated stable $\i$-categories.  Thus, it
suffices to show that the $\i$-category of $\bbS[t]$-modules is
compact as a compactly-generated stable $\i$-category, i.e. as an
object of $\mathrm{Pr^L_{St,\omega}}$.  By proposition 3.5
of \cite{AG}, this reduces to showing that $\bbS[t]$ is compact as an
$\bbS$-algebra, which is clear because it is free on one generator.
\end{proof}

\begin{proposition}\label{prop:endident}
Let $\cA$ be an object in $\idemstabcat$.  Then there is a natural
equivalence 
\[
\idemfun(\Perf_{\bbS[t]},\cA)\simeq\End(\cA).
\]
\end{proposition}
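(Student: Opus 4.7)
The plan is to exhibit the equivalence via the spectrally-enriched description of $\idemstabcat$ recalled in \S\ref{sub:spectral}. Under this equivalence, $\Perf_{\bbS[t]}$ corresponds to the one-object spectral category whose endomorphism ring spectrum is the tensor algebra $\bbS[t]$ itself, with $\bbS[t]\in\Perf_{\bbS[t]}$ playing the role of the distinguished compact generator. An exact functor $F\colon\Perf_{\bbS[t]}\too\cA$ is then determined, up to coherent homotopy, by (i) the image $x:=F(\bbS[t])\in\cA$, together with (ii) a map of associative ring spectra $\phi\colon\bbS[t]\too\Map_{\cA}(x,x)$ induced by the action of $F$ on morphism spectra. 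This defines the candidate evaluation functor $\idemfun(\Perf_{\bbS[t]},\cA)\too\End(\cA)$ sending $F$ to the pair $(x,\phi(t)\colon x\too x)$.

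To show this evaluation functor is an equivalence, I would invoke the universal property of $\bbS[t]$: as the tensor algebra on the sphere, it is the free associative ring spectrum on one generator, so that for any associative ring spectrum $E$ one has natural equivalences $\mathrm{Map}_{\mathrm{Alg}}(\bbS[t],E)\simeq\Map(\bbS,E)\simeq\Omega^\infty E$. Applying this with $E=\Map_{\cA}(x,x)$ identifies the data (ii) with a single point of $\Omega^\infty\Map_{\cA}(x,x)$, i.e.\ with a single endomorphism $f\colon x\too x$. Assembling, the data of an exact functor $F$ amounts precisely to an object of $\cA$ together with an endomorphism of that object, which is exactly the data of an object of $\End(\cA)=\Fun(\Delta^1/\partial\Delta^1,\cA)$. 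The resulting equivalence is manifestly natural in $\cA\in\idemstabcat$.

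The main technical hurdle is to package this informal ``object plus endomorphism'' classification of exact functors into a genuine equivalence of $\i$-categories, rather than a bijection on equivalence classes of objects. The cleanest route is to pass to $\Ind$-categories: by lemma \ref{cor:compact} the object $\Perf_{\bbS[t]}$ is compact in $\idemstabcat$, and there is a natural equivalence between $\idemfun(\Perf_{\bbS[t]},\cA)$ and the $\i$-category of colimit-preserving functors $\Mod_{\bbS[t]}\too\Ind(\cA)$ that send compact objects to compact objects. The $\i$-categorical Schwede--Shipley theorem then identifies the latter with the $\i$-category of $\bbS[t]$-module objects in $\Ind(\cA)$ whose underlying object lies in $\cA$, via evaluation at the free rank-one module $\bbS[t]\in\Mod_{\bbS[t]}$. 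Finally, such a module structure on $x\in\cA$ is by definition a map of ring spectra $\bbS[t]\too\Map_{\cA}(x,x)$, and applying the universal property of $\bbS[t]$ as a free associative ring spectrum converts this to the datum of a single endomorphism of $x$, completing the identification naturally in $\cA$.
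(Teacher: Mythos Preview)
Your proposal is correct and follows essentially the same approach as the paper. The paper's proof directly invokes \cite[\S4]{BGT} to identify $\idemfun(\Perf_{\bbS[t]},\cA)$ with the $\i$-category underlying the spectral category $\rep(\bbS[t],\tilde{\cA})$ of right-compact $\bbS[t]$-$\tilde{\cA}$-bimodules (for a spectral lift $\tilde{\cA}$ of $\cA$), and then observes that an $\bbS[t]$-$\tilde{\cA}$-bimodule is exactly a $\tilde{\cA}$-module equipped with an endomorphism; your argument unpacks this same identification via $\Ind$-categories and Morita/Schwede--Shipley theory rather than citing the bimodule description directly, and both routes ultimately rest on the free-algebra universal property of $\bbS[t]$. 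One small remark: your appeal to Lemma~\ref{cor:compact} is not needed here---compactness of $\Perf_{\bbS[t]}$ in $\idemstabcat$ enters only in the co-representability theorem, not in this proposition.
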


\begin{proof}
By the work of~\cite[\S 4]{BGT}, we know that
$\idemfun(\Perf_{\bbS[t]}, \cA)$ can be described as the $\i$-category
associated to the spectral category $\rep(\bbS[t], \tilde{\aA})$ of
$S[t]$-$\tilde{\aA}$-bimodules, where $\tilde{\cA}$ is a spectral lift
of $\aA$.  An $\bbS[t]$-$\tilde{\cA}$-bimodule is the same thing as a
$\tilde{\cA}$-module with an endomorphism.  Next, the condition of
being right compact means that these are precisely the
$\tilde{\cA}$-modules with endomorphisms that are compact as
$\tilde{\cA}$-modules; \ie this is the category
$\End(\perf(\tilde{\cA}))$.
\end{proof}

\begin{theorem}\label{thm:co-rep}
The above functor \eqref{functor:End} is an additive invariant.
Moreover, for every $\cC \in \idemstabcat$ there is a natural weak
equivalence of spectra  
\[
\Map(\Umot(\Perf_{\bbS[t]}), \Umot(\cC)) \simeq \KEnd(\cC)\,.
\]
\end{theorem}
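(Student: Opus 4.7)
My plan is to verify the additivity of $\KEnd$ first, and then to deduce the corepresentability statement by combining Proposition~\ref{prop:endident} with a spectrally-enriched form of the corepresentation equivalence~\eqref{eq:corep1}.

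For additivity, Proposition~\ref{prop:endident} yields a natural equivalence $\End(\cA)\simeq\idemfun(\Perf_{\bbS[t]},\cA)$, so $\KEnd$ can be rewritten as $\cA\mapsto K(\idemfun(\Perf_{\bbS[t]},\cA))$. Since algebraic $K$-theory is already an additive invariant, it suffices to verify that $\idemfun(\Perf_{\bbS[t]},-)\colon\idemstabcat\to\idemstabcat$ preserves filtered colimits and split-exact sequences. The first is precisely the compactness of $\Perf_{\bbS[t]}$, established in Lemma~\ref{cor:compact}. The second follows because a split-exact sequence in $\idemstabcat$ carries explicit sections, and these sections are transported by $\idemfun(\Perf_{\bbS[t]},-)$ to again split the postcomposed sequence.

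For the corepresentability part, I plan to invoke the spectrally-enriched mapping formula for noncommutative motives from \cite{BGT}: for a compact $\cA\in\idemstabcat$, one has a natural equivalence of spectra
\[
\Map_{\Motadd}\bigl(\Umot(\cA),\Umot(\cB)\bigr)\simeq K\bigl(\idemfun(\cA,\cB)\bigr).
\]
This reduces to~\eqref{eq:corep1} when $\cA=\Perf_\bbS$, since $\idemfun(\Perf_\bbS,\cB)\simeq\cB$ because $\Perf_\bbS$ is the unit of $\idemstabcat$. Applying the formula with $\cA=\Perf_{\bbS[t]}$ (compact by Lemma~\ref{cor:compact}) and $\cB=\cC$, and appealing to Proposition~\ref{prop:endident} to identify the right-hand side with $K(\End(\cC))=\KEnd(\cC)$, produces the desired equivalence, naturally in $\cC$.

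The main obstacle lies in cleanly invoking (or, if necessary, establishing) the general mapping-spectrum formula above, which extends~\eqref{eq:corep1} from the case $\cA=\Perf_\bbS$ to arbitrary compact $\cA$. Should that result not be directly available in the form stated, an alternative is to proceed by universal properties: both sides of the target equivalence are additive functors of $\cC$, so by~\eqref{eq:induced} they factor through $\Umot$ as colimit-preserving functors out of $\Motadd$. One can then produce a canonical natural transformation (using the tautological endomorphism of the generator of $\Perf_{\bbS[t]}$, which gives a distinguished class in $\pi_0\KEnd(\Perf_{\bbS[t]})$) and verify that it is an equivalence on the compactly generating class of objects $\Perf_R$ for ring spectra $R$, where both sides reduce via~\eqref{eq:corep1} and Proposition~\ref{prop:endident} to classical $K$-theoretic computations.
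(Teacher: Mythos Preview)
Your proposal is correct and follows essentially the same route as the paper: compactness of $\Perf_{\bbS[t]}$ (Lemma~\ref{cor:compact}), the general mapping-spectrum formula \cite[7.13]{BGT}, and Proposition~\ref{prop:endident}. The only organizational difference is that the paper deduces additivity \emph{from} the corepresentability equivalence (since $\Map(\Umot(\Perf_{\bbS[t]}),\Umot(-))$ is automatically an additive invariant), rather than verifying it separately beforehand as you do; your direct check is fine but unnecessary.
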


\begin{proof}
The first claim follows from the second.  We can verify the second
claim as follows.  Since by Lemma~\ref{cor:compact} the
$\infty$-category $\Perf_{\bbS[t]}$ is compact, it follows
from~\cite[7.13]{BGT} that we have a natural equivalence of
spectra 
$$\Map(\Umot(\Perf_{\bbS[t]},\Umot(\cA)) \simeq
K \idemfun(\Perf_{\bbS[t]},\cA)\, .$$

Now we apply Proposition~\ref{prop:endident}.  To complete the 
comparison, we need to consider the Waldhausen structures giving rise
to the algebraic $K$-theory of $\rep(S[t], \tilde{\aA})$ and
$\KEnd(\perf(\tilde{\cA}))$ --- Proposition~\ref{prop:agreement1}
allows us to use the Waldhausen models for the $K$-theory spectra.

The Waldhausen structure on $\End(\tilde{\cA})$ is inherited from the
forgetful functor $\End(\perf(\tilde{\cA})) \to \perf(\tilde{\cA})$;
cofibrations are maps which are cofibrations in $\perf(\tilde{\cA})$.
On the other hand, the Waldhausen structure on $\rep(S[t],\cA)$ is
given by maps which are cofibrations of bimodules.  The identity
functor $\rep(S[t],\tilde{\cA}) \to \End(\perf(\tilde{\cA}))$ is
exact, and evidently induces an equivalence on homotopy categories.
Since in both Waldhausen structures all maps are weak cofibrations and
we have functorial factorization, the generalized version of the
approximation theorem (e.g., see~\cite[1.1]{BM}) implies that this
exact functor induces an equivalence on $K$-theory spectra.
\end{proof}

\subsection{Agreement property}\label{s:agreement}
We start by showing that definition~\ref{def:K-th-endo} subsumes the
setting of Waldhausen categories. 

\begin{proposition}{(Agreement I)}\label{prop:agreement1}
Let $\aC$ be a DHKS-saturated Waldhausen category that admits functorial
factorization and $\N\aC[W^{-1}]$ the associated $\infty$-category.
Then there is a natural zig-zag of weak equivalences of spectra
between $\KEnd(\aC)$ and $\KEnd(\N\aC[W^{-1}])$.
\end{proposition}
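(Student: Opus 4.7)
The strategy is to reduce the assertion to Theorem~\ref{thm:comp} applied to the endomorphism category $\End(\aC)$, after identifying its underlying $\i$-category with the functor $\i$-category $\End(\N\aC[W^{-1}])=\Fun(\Delta^1/\partial\Delta^1,\N\aC[W^{-1}])$.

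First, I would check that $\End(\aC)$, equipped with the Waldhausen structure of Lemma~\ref{lem:Wald}, is again DHKS-saturated and admits functorial factorization. Functorial factorization is already recorded in Lemma~\ref{lem:Wald}, and DHKS-saturation follows because weak equivalences in $\End(\aC)$ are reflected along the conservative forgetful functor $\End(\aC)\to\aC$ to the isomorphisms in the underlying homotopy category, so the two-out-of-six property is inherited from $\aC$. Writing $W_{\End}$ for the class of weak equivalences of $\End(\aC)$, Theorem~\ref{thm:comp} then supplies a natural zig-zag of equivalences
\[
K(\End(\aC))\simeq K(\N\End(\aC)[W_{\End}^{-1}]).
\]

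The main content of the argument is to construct a natural equivalence of $\i$-categories with cofibrations
\[
\N\End(\aC)[W_{\End}^{-1}]\stackrel{\sim}{\too}\End(\N\aC[W^{-1}]),
\]
after which one applies $K(-)$ to both sides and compares with the definition $\KEnd(\N\aC[W^{-1}])=K(\End(\N\aC[W^{-1}]))$. The natural map arises from the composite $\N\End(\aC)=\Fun(\N\underline{\bN},\N\aC)\to\Fun(\N\underline{\bN},\N\aC[W^{-1}])$, which inverts $W_{\End}$ (since the target is obtained by inverting pointwise weak equivalences), and hence factors through the localization on the left. To see this map is an equivalence, I would use that $\aC$ has a homotopy calculus of left fractions (by \cite[6.2]{BM} and the functorial factorization hypothesis), which gives the Dwyer-Kan zig-zag models for mapping spaces in $\N\aC[W^{-1}]$, and exhibit an inverse by lifting a pair $(x,\alpha\colon x\to x)$ in $\N\aC[W^{-1}]$ to a zig-zag $(x,\alpha)$ in $\End(\aC)$ using functorial factorization to replace $\alpha$ by an honest morphism after a trivial cofibrant/fibrant modification of $x$.

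The hard part is this identification of localizations: localization does not in general commute with functor $\i$-categories, but for the domain $\N\underline{\bN}\simeq\Delta^1/\partial\Delta^1$ one can argue directly using the HCLF. A clean way to package this is to observe that both sides classify the same data (an object of the localized $\i$-category equipped with a self-map up to coherent homotopy), and to check that the cofibrations match on both sides: on the left they are the arrows equivalent to images of $\cof(\End(\aC))$-morphisms, while on the right they are the pointwise cofibrations of $\Fun(\Delta^1/\partial\Delta^1,\N\aC[W^{-1}])$, and these agree because $W_{\End}$ and $\cof(\End(\aC))$ are each determined pointwise from their counterparts in $\aC$. Chaining the resulting equivalence of $K$-theory spectra with the zig-zag from Theorem~\ref{thm:comp} yields the desired natural zig-zag between $\KEnd(\aC)$ and $\KEnd(\N\aC[W^{-1}])$.
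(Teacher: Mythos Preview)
Your overall strategy matches the paper's: apply the Waldhausen-to-$\i$-category comparison (Theorem~\ref{thm:comp}, equivalently \cite[7.10]{BGT}) to the Waldhausen category $\End(\aC)$, and then identify the resulting $\i$-category $\N\End(\aC)[W_{\End}^{-1}]$ with $\Fun(\Delta^1/\partial\Delta^1,\N\aC[W^{-1}])$. The preliminary observations (functorial factorization and DHKS-saturation pass to $\End(\aC)$) are correct and needed.

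Where you diverge from the paper is precisely at the step you flag as ``the hard part''. You propose to prove the identification $\N\End(\aC)[W_{\End}^{-1}]\simeq\End(\N\aC[W^{-1}])$ directly from the homotopy calculus of left fractions, by lifting self-maps through zig-zags. This is plausible in spirit but is not a complete argument: the HCLF gives you models for individual mapping spaces, but assembling these into a statement that localization commutes with $\Fun(\N\underline{\bN},-)$ requires more than object-by-object lifting, and your sketch does not supply the coherence needed to show the comparison map is an equivalence of $\i$-categories. The paper sidesteps this entirely: it first reduces (via \cite[7.7,~7.10]{BGT}) to the case where $\aC$ is a full subcategory of cofibrant objects in a combinatorial pointed model category, and then invokes \cite[1.3.4.25]{HA}, which in that setting directly yields
\[
\N(\Fun(\underline{\bN},\aC))[W^{-1}]\simeq\Fun(\Delta^1/\partial\Delta^1,\N\aC[W^{-1}]).
\]
So the paper trades your hands-on HCLF argument for a reduction step plus a citation to Lurie; this is both shorter and avoids the coherence issues your approach would have to confront. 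If you want to make your route work, you would need to either prove an HCLF-based analogue of \cite[1.3.4.25]{HA} for $\N\underline{\bN}$-shaped diagrams, or perform the same reduction to model categories that the paper does.
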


\begin{proof}
First, assume that $\aC$ is a full subcategory of the cofibrant
objects in a combinatorial pointed model category $\aA$.
Then~\cite[1.3.4.25]{HA} implies that there is an equivalence 
\[
\N(\Fun(\underline{\mathbb{N}}, \aC)[W^{-1}] \htp \Fun(\Delta^1
/ \partial \Delta^1, \N \aC[W^{-1}]) 
\]
and therefore an equivalence of spectra
\begin{equation}\label{eq:newequiv}
K(\N(\Fun(\underline{\mathbb{N}},\aC)[W^{-1}]) \htp
K \End(\N \aC[W^{-1}]). 
\end{equation}

Although the Waldhausen structure on $\End(\aC)$ need not arise as
induced from a model structure by restriction to the cofibrant
objects, since $\End(\aC)$ inherits functorial factorization from
$\aC$ we can reduce to this case~\cite[7.7]{BGT}.  Next,
using~\cite[7.10]{BGT}, which compares the $K$-theory of a Waldhausen
category to the $K$-theory of the underlying $\i$-category, we find
that the left-hand side of \eqref{eq:newequiv} is equivalent to the
Waldhausen $K$-theory $\KEnd(\aC)$.  When $\aC$ is an arbitrary
Waldhausen category that admits functorial factorization and is
DHKS-saturated, we can again use~\cite[7.10]{BGT} to reduce to the
case of a full subcategory of the cofibrant objects in a pointed model
category.
\end{proof}

The following result relates the classical definition of the
$K$-theory of endomorphisms of an ordinary ring $A$ with the
definition given herein for the associated Eilenberg-Mac Lane spectrum 
$HA$.

\begin{theorem}{(Agreement II)}\label{thm:agreement2}
Let $A$ be an ordinary ring.  There exists a canonical zig-zag of weak
equivalences of spectra between the $K$-theory spectrum
$\KEnd(\mathbf{P}(A))$ of the exact category $\End(\mathbf{P}(A))$ and
the $K$-theory spectrum $\KEnd(\Perf_{HA})$ of the $\i$-category
$\End(\Perf_{HA})$. 
\end{theorem}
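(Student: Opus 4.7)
The plan is to exhibit a common Waldhausen model that bridges the exact category $\End(\mathbf{P}(A))$ and the $\i$-category $\End(\Perf_{HA})$, namely the Waldhausen category $\Ch^b(\End(\mathbf{P}(A))) = \End(\Ch^b(\mathbf{P}(A)))$ of bounded complexes of finitely generated projective $A$-modules equipped with a chain endomorphism. Both interpretations of this category agree as Waldhausen categories, with weak equivalences the quasi-isomorphisms of underlying $A$-complexes and cofibrations the degreewise admissible monomorphisms. The strategy is to compare $\KEnd(\mathbf{P}(A))$ with the $K$-theory of this bridge by the Gillet--Waldhausen theorem, and then to compare the latter with $\KEnd(\Perf_{HA})$ by proposition \ref{prop:agreement1}.

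\emph{Step 1 (Gillet--Waldhausen).} First, $\End(\mathbf{P}(A))$ is an idempotent-complete exact category: it may be identified with the full subcategory of $A[t]$-modules whose underlying $A$-module is finitely generated projective, and kernels and images of idempotents on such modules remain finitely generated projective as $A$-modules. The Gillet--Waldhausen theorem therefore supplies a natural equivalence $K(\End(\mathbf{P}(A))) \simeq K(\Ch^b(\End(\mathbf{P}(A))))$ between Quillen $K$-theory and Waldhausen $K$-theory.

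\emph{Step 2 (Agreement I).} The standard comparison $\N(\Ch^b(\mathbf{P}(A)))[W^{-1}] \simeq \Perf_{HA}$, combined with \cite[1.3.4.25]{HA} --- already invoked in the proof of proposition \ref{prop:agreement1} --- which asserts that localization commutes with the passage to the functor $\i$-category out of $\N(\underline{\mathbb{N}}) \simeq \Delta^1/\partial\Delta^1$, produces an equivalence $\N(\End(\Ch^b(\mathbf{P}(A))))[W^{-1}] \simeq \End(\Perf_{HA})$. Moreover, $\End(\Ch^b(\mathbf{P}(A)))$ inherits functorial factorization from $\Ch^b(\mathbf{P}(A))$ via the equivariant mapping cylinder of a morphism of endomorphism pairs (endowed with the diagonal endomorphism), and it is DHKS-saturated because the forgetful functor detects weak equivalences. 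Proposition \ref{prop:agreement1} consequently yields $K(\End(\Ch^b(\mathbf{P}(A)))) \simeq K(\End(\Perf_{HA})) = \KEnd(\Perf_{HA})$.

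Concatenating the zig-zags of steps 1 and 2 produces the desired natural equivalence $\KEnd(\mathbf{P}(A)) \simeq \KEnd(\Perf_{HA})$. The principal technical hurdle will be the identification of underlying $\i$-categories in step 2: the weak equivalences on $\End(\Ch^b(\mathbf{P}(A)))$ are detected after forgetting the endomorphism, while morphisms in the functor $\i$-category $\End(\Perf_{HA})$ encode homotopy-coherent commutation squares with the endomorphism. The cited Lurie result is precisely what converts pointwise localization into the functor category of the localization, and thereby closes this gap.
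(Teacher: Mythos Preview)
Your proposal is correct, and in fact takes a cleaner route than the paper's own argument. Both proofs share Step~1 verbatim: the paper also invokes \cite[1.11.7]{TT} (Gillet--Waldhausen) to pass from $\End(\mathbf{P}(A))$ to $\Ch^b(\End(\mathbf{P}(A))) = \End(\Ch^b(\mathbf{P}(A)))$. The divergence is in Step~2. The paper does \emph{not} apply Proposition~\ref{prop:agreement1} to $\Ch^b(\mathbf{P}(A))$ directly; instead it applies it to the spectral Waldhausen category $\perf(HA)$, then compares $\End(\perf(HA))$ with $\End(\perf(A))$ via the derived equivalence $\Ho(HA[t])\simeq\Ho(A[t])$, and finally compares $\End(\Ch^b(\mathbf{P}(A)))$ with $\End(\perf(A))$ by invoking Barwick's theorem of the heart for exact $\i$-categories, which in turn requires establishing a bounded $t$-structure on $\Ho(\End(\perf(A)))$ (the content of Lemma~\ref{lem:t}). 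Your approach sidesteps this entire detour: since $\Ch^b(\mathbf{P}(A))$ is already a full subcategory of cofibrant objects in the combinatorial model category $\Ch(A)$, is DHKS-saturated, and has functorial mapping cylinders, Proposition~\ref{prop:agreement1} applies to it directly, and the identification $\N(\Ch^b(\mathbf{P}(A)))[W^{-1}]\simeq\Perf_{HA}$ (which follows from Example~\ref{ex:complexes} together with the fact that every perfect complex is quasi-isomorphic to a bounded complex of finitely generated projectives) finishes the job. What the paper's approach buys is perhaps a more explicit passage through the module-theoretic description $\End(\perf(HA))\simeq\{HA[t]\text{-modules perfect over }HA\}$, but at the cost of importing Barwick's machinery; your route is more economical. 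One small remark: in your Step~2 you verify functorial factorization and DHKS-saturation for $\End(\Ch^b(\mathbf{P}(A)))$, but Proposition~\ref{prop:agreement1} as stated only requires these hypotheses on $\Ch^b(\mathbf{P}(A))$ itself---the passage to $\End(-)$ is handled internally by that proposition.
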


\begin{proof}
Using Proposition~\ref{prop:agreement1}, we have a zig-zag of weak
equivalences between $\KEnd(\Perf_{HA})$ and $\KEnd(\perf(HA))$.  We
can assume without loss of generality that $HA$ is a cofibrant
$S$-algebra.  Recall from the discussion of Example~\ref{ex:freeassoc}
that $\End(\perf(HA))$ is equivalent to the category of
$HA \sma \bN_+$-modules that are perfect as $HA$-modules.  Moreover,
$HA \sma \bN_+ \cong HA[t]$.  By~\cite[\S IV.2.4]{EKMM}, the derived
category $\Ho(HA[t])$ of $HA[t]$-modules is equivalent to the derived
category $\Ho(A[t])$ of complexes of $A[t]$-modules.  More generally,
we have a commutative diagram
\[
\xymatrix{
\Ho(HA[t]) \ar[r]^{\htp} \ar[d] & \Ho(A[t]) \ar[d] \\
\Ho(HA) \ar[r]^{\htp} & \Ho(A), \\
}
\]
where the vertical maps are induced from the forgetful functors.  As a
consequence, we can conclude that there is an equivalence of homotopy
categories between $\End(\perf(HA))$ and the Waldhausen category
$\End(\perf(A))$ where $\perf(A)$ denotes the Waldhausen category of
perfect complexes of $A$-modules, where the cofibrations are the split
monomorphisms and the weak equivalences the quasi-isomorphisms.

Now, using the identification (where $\Ch^b$ stands for bounded complexes)
\[
\End(\Ch^b(\mathbf{P}(A))) \cong \Ch^b(\End(\mathbf{P}(A))),
\]
we obtain a zig-zag of exact functors connecting $\End(\mathbf{P}(A))$
to $\End(\Ch^b(\mathbf{P}(A))$.  Specifically, let $\aE'$ denote the
Waldhausen category structure on $\Ch^b(\End(\mathbf{P}(A)))$ where the
cofibrations are the levelwise admissible monomorphisms and the weak
equivalences are the quasi-isomorphisms.  Then by~\cite[1.11.7]{TT},
the zig-zag of exact functors 
\[
\xymatrix{
\End(\mathbf{P}(A)) \ar[r] & \aE' & \Ch^b(\End(\mathbf{P}(A))) \ar[l]_-{\Id} \\
}
\]
induces equivalences on passage to $K$-theory. Therefore, it suffices
to show that the exact inclusion functor 
\[
\iota \colon \End(\Ch^b(\mathbf{P}(A))) \to \End(\perf(A))
\]
induces an equivalence on $K$-theory spectra.  We do this using a
modern reformulation of~\cite[1.11.7]{TT} arising from Barwick's
theory of the $K$-theory of exact $\i$-categories~\cite{Barwickexact}.
Since Lemma~\ref{lem:t} below implies that $\Ho(\End(\perf(A)))$ has
a bounded $t$-structure, we can apply~\cite[5.6.1]{Barwickexact} to
compare $K(\End(\perf(A)))$ to the Quillen $K$-theory of the heart of
the $t$-structure regarded as an exact category.  As the heart is
precisely $\End(\mathbf{P}(A))$, the fact that this equivalence
factors through $\iota$ shows that $\iota$ is an equivalence as well.
\end{proof}

We now establish the existence of a bounded $t$-structure on
$\End(\perf(A))$.  Recall that a $t$-structure on a triangulated
category $\aC$ is determined by a pair of full subcategories
$\aC_{\leq 0}$ and $\aC_{\geq 0}$ such that:
\begin{enumerate}
\item For objects $X,Y$ in $\aC$ such that $X \in \aC_{\geq 0}$ and
$Y \in \aC_{\leq 0}$, $\Hom_{\aC}(X,Y[-1]) = 0$.
\item There are inclusions $\aC_{\geq 0}[1] \subseteq \aC_{\geq 0}$
and $\aC_{\leq 0}[-1] \subseteq \aC_{\leq 0}$.
\item For any object $X$ in $\aC$, there exists a distinguished
triangle $X' \to X \to X''$ such that $X' \in \aC_{\geq 0}$ and
$X'' \in \aC_{\leq 0}[-1]$.
\end{enumerate}

A standard example of a $t$-structure is given by considering
$\Ho(\perf(A))$ for an ordinary ring $A$ (the derived category of
perfect complexes), and defining $\aC_{\geq 0}$ and $\aC_{\leq 0}$ to
be the complexes with non-negative homology and non-positive homology,
respectively.

We write $\aC_{\leq n}$ for $\aC_{\leq 0}[n]$ and $\aC_{\geq m}$ for
$\aC_{\geq 0}[m]$.  A $t$-structure is bounded if all objects in $\aC$
are contained in $\aC_{\leq n} \cap \aC_{\geq m}$ for some $n > m$.
The standard $t$-structure on $\Ho(\perf(A))$ is evidently bounded.

\begin{lemma}\label{lem:t}
Let $A$ be an ordinary ring.  Then the category
\[
\Ho(\End(\perf(A))) \htp \Ho(\End(\Perf_{HA}))
\]
is triangulated and 
has a bounded $t$-structure induced from the triangulation and
standard bounded $t$-structure on $\Ho(\perf(A))$. 
\end{lemma}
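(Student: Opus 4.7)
The plan is to transfer the standard bounded $t$-structure on $\Ho(\perf(A))$ to $\Ho(\End(\perf(A)))$ by pulling back along the forgetful functor $U \colon \End(\Perf_{HA}) \to \Perf_{HA}$, exploiting functoriality of truncations. First I would record that $\End(\Perf_{HA}) = \Fun(\Delta^1/\partial\Delta^1, \Perf_{HA})$ is stable by~\cite[1.1.3.1]{HA}, so its homotopy category is canonically triangulated. The equivalence $\Ho(\End(\perf(A))) \htp \Ho(\End(\Perf_{HA}))$ is already implicit in Examples~\ref{ex:complexes} and~\ref{ex:freeassoc}: both sides identify with the homotopy category of $HA[t]$-modules that are perfect as $HA$-modules, via the isomorphism $HA[t] \iso HA\wedge\bN_+$ and the standard Eilenberg--Mac Lane comparison~\cite[IV.2.4]{EKMM}.

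With triangulation in hand, define $\aE_{\geq 0}$ (resp.\ $\aE_{\leq 0}$) to be the full subcategory of $\aE := \Ho(\End(\Perf_{HA}))$ spanned by those $(X,\alpha)$ with $X\in\Ho(\perf(A))_{\geq 0}$ (resp.\ $X\in\Ho(\perf(A))_{\leq 0}$). Axiom (2) (closure under shifts) is immediate since $U$ is exact. Boundedness is also immediate: $U$ detects $\aE_{\leq n}\cap\aE_{\geq m}$, so boundedness of the standard $t$-structure on $\Ho(\perf(A))$ transfers directly.

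For axiom (1), I would compute mapping spaces in $\End$ via the equalizer description: since $\End(\cC) = \Fun(\Delta^1/\partial\Delta^1, \cC)$, there is a natural fiber sequence
\[
\Map_{\End(\cC)}((X,\alpha),(Y,\beta)) \too \Map_\cC(X,Y) \xrightarrow{L_\beta - R_\alpha} \Map_\cC(X,Y),
\]
where $L_\beta$ and $R_\alpha$ denote post- and pre-composition. If $X\in\cC_{\geq 0}$ and $Y\in\cC_{\leq 0}$, then $\Hom_\cC(X, Y[-n]) = 0$ for every $n\geq 1$ by orthogonality of the standard $t$-structure, so the long exact sequence forces $\pi_0$ of the equalizer to vanish, i.e.\ $\Hom_\aE((X,\alpha),(Y,\beta)[-1]) = 0$.

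For axiom (3), the point is that the truncation functors $\tau_{\geq 0}$ and $\tau_{\leq -1}$ on $\Perf_{HA}$ are $\infty$-functorial, hence postcomposition induces endofunctors on $\End(\Perf_{HA}) = \Fun(\Delta^1/\partial\Delta^1, \Perf_{HA})$. Applied to $(X,\alpha)$ this yields a fiber sequence
\[
(\tau_{\geq 0} X,\, \tau_{\geq 0}\alpha) \too (X,\alpha) \too (\tau_{\leq -1} X,\, \tau_{\leq -1}\alpha)
\]
in $\End(\Perf_{HA})$, since fiber sequences in functor $\infty$-categories are computed pointwise, and its image in $\aE$ supplies the required distinguished triangle. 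The main subtlety—really the only place where stability (rather than a purely $1$-categorical argument) is used—is in axiom (1), where one needs the vanishing of $\pi_1\Map_\cC(X,Y[-1])=\Hom(X,Y[-2])$ alongside $\pi_0$; but this too is a consequence of the orthogonality of the $t$-structure in all negative degrees, so no serious obstacle arises.
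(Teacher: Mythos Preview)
Your proof is correct and follows essentially the same approach as the paper's: pull back the standard $t$-structure along the forgetful functor, and use functoriality of truncation for axiom~(3). The only differences are cosmetic---the paper cites Mandell--Shipley's functorial connective cover on $R$-modules where you invoke $\infty$-functoriality of $\tau_{\geq 0}$ directly, and the paper treats axiom~(1) as obvious whereas you supply the explicit fiber-sequence argument.
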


\begin{proof}
Recall that the homotopy category of a stable $\i$-category is
triangulated~\cite[1.1.2.14]{HA}.  Since $\End(\Perf_{HA})$ inherits
the structure of a stable $\i$-category from
$\Perf_{HA}$~\cite[1.1.3.1]{HA}, $\Ho(\End(\Perf_{HA}))$ has a
triangulation in which the distinguished triangles in
$\Ho(\End(\Perf_{HA}))$ are induced by the forgetful 
functor 
\[
\Ho(\End(\Perf_{HA})) \to \Ho(\Perf_{HA}).
\]
Since the underlying stable $\i$-category of $\perf(A)$ is equivalent
to $\Perf_{HA}$, this triangulation on $\Perf_{HA}$ can be regarded as
coming from the triangulation on $\perf(A)$.

The $t$-structure on $\Ho(\perf(A))$ induces one on
$\Ho(\End(\perf(A)))$ using full subcategories
$\Ho(\End(\perf(A)))_{\geq 0}$ and $\Ho(\End(\perf(A)))_{\leq 0}$
determined by the forgetful functor.  The only nontrivial condition to
check is that any $X$ can be fit into a triangle $X' \to X \to X''$
where $X'$ has nonnegative homology and $X''$ has nonpositive
homology.  For any connective ring spectrum $R$, there is a functorial
construction of the connective cover on the category of $R$-modules,
such that we have a natural transformation
$C \to \id$~\cite[4.2]{MandellShipley}.  Since it is functorial, this
construction passes to $\End(\perf(A))$, and the associated cofiber
sequence gives the required triangle.

Finally, it is clear that the induced $t$-structure on
$\Ho(\End(\perf(A)))$ is bounded since the one on $\Ho(\perf(A))$ is.
\end{proof}

We conclude the section with a technical lemma which gives a partial
analysis of the homotopy category of
$\mathrm{End}(\Ch^b(\mathbf{P}(A)))$.

\begin{lemma}\label{lem:esurj}
The induced functor
\begin{equation}\label{eq:Qiso}
\iota\colon \mathrm{End}(\Ch^b(\mathbf{P}(A))) \too \mathrm{End}(\perf(A))\,,
\end{equation}
is homotopically essentially surjective, where we equip each side
with the weak equivalences given by the underlying
quasi-isomorphisms.
\end{lemma}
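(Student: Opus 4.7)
The plan is to produce, for every $(P^\bullet,\alpha)\in\End(\perf(A))$, an object $(Q^\bullet,\beta)\in\End(\Ch^b(\mathbf{P}(A)))$ together with a zig-zag of quasi-isomorphisms in $\End(\perf(A))$ connecting the two, in three steps.

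First, I will use the perfectness of $P^\bullet$ to choose a bounded complex $Q^\bullet$ of finitely generated projective $A$-modules together with a quasi-isomorphism $q\colon Q^\bullet\to P^\bullet$. Being a bounded complex of projectives, $Q^\bullet$ is cofibrant in the projective model structure on $\Ch(A)$; combined with the fact that $q$ is a quasi-isomorphism, post-composition by $q$ induces a bijection
\[
q_\ast\colon [Q^\bullet,Q^\bullet]_{\cD(A)}\xrightarrow{\ \sim\ }[Q^\bullet,P^\bullet]_{\cD(A)}.
\]
Pulling back the class of $\alpha\circ q$ along $q_\ast$ yields a chain endomorphism $\beta\colon Q^\bullet\to Q^\bullet$, together with a chain homotopy $h$ satisfying $dh+hd=\alpha q-q\beta$. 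The pair $(Q^\bullet,\beta)$ now lies in $\End(\Ch^b(\mathbf{P}(A)))$, but $q$ is only a morphism in $\End$ up to chain homotopy.

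Second, to rectify this I will consider the mapping cylinder $C^\bullet:=\mathrm{Cyl}(q)$ in $\Ch(A)$, whose underlying graded module is $P^\bullet\oplus Q^\bullet\oplus Q^\bullet[1]$ with the standard cylinder differential. I will equip $C^\bullet$ with an endomorphism $\gamma$ acting as $\alpha$ on the $P^\bullet$ summand, as $\beta$ on both $Q^\bullet$ summands, and with a mixing term given by $h$ from the $Q^\bullet[1]$ coordinate into the $P^\bullet$ coordinate. A direct computation---using precisely the relation $dh+hd=\alpha q-q\beta$---shows that $\gamma$ commutes with the cylinder differential. Since $P^\bullet$ and $Q^\bullet$ both belong to $\perf(A)$, so does their direct sum $C^\bullet$, and hence $(C^\bullet,\gamma)$ is an object of $\End(\perf(A))$.

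Finally, the canonical inclusions $i_P\colon P^\bullet\hookrightarrow C^\bullet$ and $i_Q\colon Q^\bullet\hookrightarrow C^\bullet$ are standard quasi-isomorphisms for the mapping cylinder, and because the mixing term in $\gamma$ only involves the $Q^\bullet[1]$ coordinate---which vanishes on the images of $i_P$ and $i_Q$---both inclusions strictly intertwine $\gamma$ with $\alpha$ and $\beta$ respectively. This produces the zig-zag $(P^\bullet,\alpha)\hookrightarrow(C^\bullet,\gamma)\hookleftarrow(Q^\bullet,\beta)$ in $\End(\perf(A))$, establishing the claim. The one delicate point is identifying the correct placement of the correction term built from $h$: the naive diagonal $(\alpha,\beta,\beta)$ fails to commute with the cylinder differential, and the mixing term in the $P^\bullet$ coordinate is precisely what is forced by the chain homotopy relation while remaining invisible on the two inclusions; once this is in place, everything else is routine verification.
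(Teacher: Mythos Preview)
Your proof is correct and takes a cleaner route than the paper's. The paper first factors the quasi-isomorphism $\theta\colon P^\bullet\to M^\bullet$ as a cofibration followed by an acyclic fibration $P^\bullet\rightarrowtail X^\bullet\twoheadrightarrow M^\bullet$, uses model-category lifting to obtain two endomorphisms $\bar\beta,\beta$ of the intermediate object $X^\bullet$ (one compatible with each end), and then invokes an abstract cylinder object $\mathrm{Cyl}(X^\bullet)$ together with a further lifting argument to produce a four-step zig-zag $P^\bullet\to X^\bullet\to\mathrm{Cyl}(X^\bullet)\leftarrow X^\bullet\to M^\bullet$ in $\End(\perf(A))$. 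Your argument instead builds an endomorphism directly on the explicit mapping cylinder of $q$, with the chain homotopy $h$ supplying the single off-diagonal correction needed, and yields a two-step zig-zag. The trade-off is that the paper's argument uses only abstract model-category axioms and so ports verbatim to other settings, whereas yours exploits the concrete formula for the mapping cylinder in $\Ch(A)$; in the present context your approach is shorter and more transparent.
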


\begin{proof}
We will decorate quasi-isomorphisms with the symbol $\sim$. Recall
that we have fully-faithful inclusions 
$\Ch^b(\mathbf{P}(A)) \hookrightarrow \mathrm{perf}(A) \hookrightarrow
\Ch(A)$ and that $\Ch(A)$ carries a projective Quillen model structure
in which every object is fibrant~\cite[2.3.11]{Hovey}. The associated
homotopy category is the derived category $\cD(A)$ of $A$. 

Let $(M^\bullet,\alpha)$ be an object of
$\mathrm{End}(\mathrm{perf}(A))$, \ie a complex
$M^\bullet \in \mathrm{perf}(A)$ and an endomorphism $\alpha$ of
$M^\bullet$.  Since $M^\bullet \in \mathrm{perf}(A)$, $M^\bullet$ has
the homotopy type of a wedge summand of a finite cell complex of
$A$-modules; that is, there exists an isomorphism in
$\cD(A)$ between $M^\bullet$ and a complex $P^\bullet \in
\Ch^b(\mathbf{P}(A))$. All the objects of $\Ch^b(\mathbf{P}(A))$ are
cofibrant~\cite[2.3.6]{Hovey} and so this isomorphism lifts
to a quasi-isomorphism $\theta: P^\bullet \stackrel{\sim}{\to}
M^\bullet$. Associated to $\alpha$ we obtain then a well-defined
endomorphism of $P^\bullet$ in the derived category $\cD(A)$. Since
$P^\bullet$ is cofibrant we can choose a representative
$\overline{\alpha}:P^\bullet \to P^\bullet$ of this endomorphism. We
obtain then a square  

\begin{equation}\label{eq:hsquare}
\xymatrix{
P^\bullet \ar[d]_{\overline{\alpha}} \ar[r]^-{\theta}_\sim & M^\bullet \ar[d]^\alpha \\
P^\bullet \ar[r]_\theta^\sim & M^\bullet 
}
\end{equation}
which is commutative only in the derived category $\cD(A)$. The proof will consist now on replacing the quasi-isomorphism $\theta$ in \eqref{eq:hsquare} by a zig-zag of strictly commutative squares relating $\overline{\alpha}$ to $\alpha$. Let
$$ 
\xymatrix@C=2em@R=1em{*+<1pc>{P^\bullet} \ar@{>->}[r]^-\sim & X^\bullet \ar@{->>}[r]^-\sim & M^\bullet
}
$$
be a factorization of $\theta$ provided by the projective model structure. Note that $X^\bullet$ is cofibrant since this is the case of $P^\bullet$. Moreover, it belongs to $\mathrm{perf}(A)$ since it is quasi-isomorphic to $M^\bullet$. The lifting properties of the projective model structure furnish us morphisms $\overline{\beta}$ and $\beta$ making the following two diagrams commutative
\begin{equation}\label{eq:2squares}
\xymatrix{
*+<1pc>{P^\bullet} \ar[d]_{\overline{\alpha}} \ar@{>->}[r]^-\sim & X_\bullet \ar[d]^{\overline{\beta}} && X_\bullet \ar[d]_{\beta} \ar@{->>}[r]^-\sim & M^\bullet \ar[d]^-\alpha \\
*+<1pc>{P^\bullet} \ar@{>->}[r]_-\sim & X^\bullet && X^\bullet \ar@{->>}[r]_-\sim & M^\bullet\,.
}
\end{equation}
By combining the squares \eqref{eq:hsquare}-\eqref{eq:2squares}, we conclude then that the endomorphisms $\overline{\beta}$ and $\beta$ of $X^\bullet$ agree in the derived category $\cD(A)$. Since $X^\bullet$ is cofibrant object there exists a cylinder object
$$
\xymatrix@C=1,5em@R=1em{
X^\bullet \oplus X^\bullet \ar@{>->}[rr]^-{[i_0 \,\,i_1]} && \mathrm{Cyl}(X^\bullet) \ar@{->>}[r]^-\sim &X^\bullet
}
$$
and a morphism $H$ making the following diagram commute
$$
\xymatrix{
X^\bullet \ar[r]^-{i_0}_-\sim \ar[dr]_{\overline{\beta}} & \mathrm{Cyl}(X^\bullet) \ar[d]^H & X^\bullet \ar[l]_-{i_1}^-\sim \ar[dl]^{\beta} \\
& X^\bullet & \,.
}
$$
Note that since $\mathrm{Cyl}(X^\bullet)$ is quasi-isomorphic to $X$, it also belongs to $\mathrm{perf}(A)$. Consider the following commutative solid diagram:
\begin{equation}\label{eq:new}
\xymatrix{
*+<1pc>{X^\bullet \oplus X^\bullet} \ar[rr]^{\overline{\beta}\oplus \beta}\ar@{>->}[d]_-{[i_0\,\,i_1]} && X^\bullet \oplus  X^\bullet \ar[rr]^{[i_0\,\, i_1]} && \mathrm{Cyl}(X^\bullet) \ar@{->>}[d]^\sim \\
\mathrm{Cyl}(X^\bullet) \ar[rrrr]_-H \ar@{-->}[urrrr]^-{\widetilde{H}} &&&& X^\bullet\,.
}
\end{equation}
By the lifting properties of the projective model structure there exists a well-defined morphism $\widetilde{H}$ as above making both triangles of the diagram commute. Now, consider the following commutative diagram:
\begin{equation}\label{eq:diag-final}
\xymatrix{
*+<1pc>{P^\bullet} \ar[d]_-{\overline{\alpha}} \ar@{>->}[r]^-\sim & X^\bullet \ar[d]^{\overline{\beta}} \ar[r]^-{i_0}_-\sim & \mathrm{Cyl}(X) \ar[d]^-{\widetilde{H}} & X^\bullet \ar[l]_-{i_1}^-\sim \ar[d]^{\beta} \ar@{->>}[r]^{\sim} & M^\bullet \ar[d]^\alpha \\
*+<1pc>{P^\bullet} \ar@{>->}[r]_-\sim & X^\bullet \ar[r]^-\sim_-{i_0} & \mathrm{Cyl}(X) & X^\bullet \ar[l]^-{i_1}_-\sim \ar@{->>}[r]_\sim & M^\bullet\,,
}
\end{equation}
Note that the commutativity of the two interior squares is equivalent
to the commutativity of the upper triangle in \eqref{eq:new}. The
diagram \eqref{eq:diag-final} can then be interpreted as a zig-zag of
quasi-isomorphisms in the category $\mathrm{End}(\mathrm{perf}(A))$
relating $(P^\bullet,\overline{\alpha})$ with
$(M^\bullet, \alpha)$. As a consequence,
$(P^\bullet,\overline{\alpha})$ and $(M^\bullet, \alpha)$ become
isomorphic in the homotopy category
$\Ho(\mathrm{End}(\Ch^b(\mathbf{P}(A))))$. Since $(P^\bullet,\alpha)$
belongs to $\mathrm{End}(\Ch^b(\mathbf{P}(A)))$ the proof is then
finished. 
\end{proof}

\section{Endomorphisms of projective modules}\label{sec:endomorphisms}

In this section, we compute $\pi_0 \KEnd(\Perf_{R})$ for a connective
ring spectrum $R$; see theorem~\ref{thm:endcomp}.  To do this, we
begin by rapidly reviewing the theory of free and projective modules
over connective ring spectra; see \cite[8.2.2.4]{HA} for further
details or \cite[\S 2]{AG} for an exposition of the relevant results.
Throughout this section, $R$ will stand for a {\em connective} ring
spectrum $R$.

\begin{definition}\label{def:free}
An $R$-module $M$ is {\em (finite) free} there exists a (finite) set
$I$ and an equivalence of $R$-modules $R^{\oplus I}\simeq M$.  An
$R$-module $M$ is {\em (finite) shifted free} if there exists a 
(finite) $\bZ$-graded set $I\to\bZ$ and an $R$-module equivalence
$\bigoplus_{n\in\bZ} \Sigma^n R^{\oplus I_n}\simeq M$, where $I_n$
denotes the fiber of $I\to\bZ$ over $n\in\bZ$.
\end{definition}

\begin{definition}\label{def:proj}
An $R$-module $P$ is said to be {\em projective} if it is projective
as an object of the $\i$-category $\Mod_R^{\geq 0}$ of connective
$R$-modules, i.e., the functor
\[
\map_R(P,-)\colon \Mod_R^{\geq 0}\too\aT
\]
commutes with geometric realizations of connective $R$-modules.
\end{definition}

\begin{remark}
We could give Definition~\ref{def:free} for modules over
non-connective ring spectra, but the notion of shifted free module is
really only sensible if $R$ is not periodic; that is, if the only
integer $n$ for which there exists an equivalence $\Sigma^n R\to R$ is
zero (of course, this property always holds for connective ring
spectra).  Similarly, Definition~\ref{def:proj} implies that $P$ is a
connective $R$-module to start with; again, this notion of projective
$R$-module is only properly behaved when $R$ is connective.  This is
because, for an arbitrary ring spectrum $R$, connective or not, there
are no nontrivial projective objects of the $\i$-category $\Mod_R$ of
all right $R$-modules (and of course the same is true in the
$\i$-category $\Mod_{R^{\op}}$ of left $R$-modules). See the argument
immediately following \cite[2.5]{AG} for details.
\end{remark}

\begin{proposition}\label{prop:proj}
Suppose that $P$ is a connective $R$-module.  Then the following are
equivalent: 
\begin{enumerate}
\item
The $R$-module $P$ is projective.
\item
The $R$-module $P$ is a retract of a free $R$-module.
\item
The functor
\[
\map_R(P,-) \colon \Mod_R^{\geq 0}\too\aT
\]
preserves surjections (i.e., morphisms which are surjective on $\pi_0$).
\item
Given a surjection (on $\pi_0$) of (not necessarily connective)
$R$-modules $N\to M$ and any map $P\to M$, there exists a map $g\colon
P\to N$ such that the resulting diagram 
\[
\xymatrix{
& P\ar[ld]\ar[rd] &\\
N\ar[rr] & & M}
\]
commutes in $\Mod_R$.
\end{enumerate}
\end{proposition}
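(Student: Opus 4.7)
The plan is to prove $(1) \Leftrightarrow (3) \Leftrightarrow (4)$ together with $(1) \Leftrightarrow (2)$, treating $(1) \Leftrightarrow (3)$ as the main point and deducing the remaining equivalences by standard bookkeeping.

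For $(1) \Leftrightarrow (3)$, I would first identify the effective epimorphisms in $\Mod_R^{\geq 0}$ with the morphisms surjective on $\pi_0$: the forgetful functor to pointed spaces preserves the sifted colimits used to form \v{C}ech nerves, and the effective epimorphisms in $\aT$ are the $\pi_0$-surjections. I would then invoke the standard characterization (as in \cite[\S 5.5.8]{HTT} or \cite[2.5]{AG}) that an object $P$ of a presentable additive $\infty$-category is projective if and only if $\map(P,-)$ preserves effective epimorphisms; this relies on the fact that, via the monadic adjunction between $\Mod_R^{\geq 0}$ and $\aT$, every connective $R$-module $M$ admits a canonical augmented simplicial resolution by free modules whose geometric realization recovers $M$.

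For $(1) \Leftrightarrow (2)$, the module $R$ itself is projective since $\map_R(R,-) \simeq \Omega^\infty$ preserves geometric realizations of connective spectra, arbitrary coproducts of projectives remain projective because geometric realization commutes with coproducts in $\aT$, and retracts of projectives are manifestly projective; this yields $(2) \Rightarrow (1)$. Conversely, given a projective $P$, I would choose generators of the $\pi_0 R$-module $\pi_0 P$ to produce a surjection $R^{\oplus I} \to P$; applying $(3)$ to the identity of $P$ provides a lift through this surjection, exhibiting $P$ as a retract of the free module $R^{\oplus I}$.

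For $(3) \Leftrightarrow (4)$, the implication $(4) \Rightarrow (3)$ is tautological. For $(3) \Rightarrow (4)$, given a $\pi_0$-surjection $N \to M$ of possibly non-connective $R$-modules and a map $f\colon P \to M$, connectivity of $P$ forces $f$ to factor through the connective cover $\tau_{\geq 0} M$, while the induced map $\tau_{\geq 0} N \to \tau_{\geq 0} M$ is still $\pi_0$-surjective since $\tau_{\geq 0}$ is the identity on $\pi_0$. Applying $(3)$ furnishes a lift $P \to \tau_{\geq 0} N$ which we compose with $\tau_{\geq 0} N \to N$ to obtain the desired map. The principal technical subtlety lies in $(1) \Leftrightarrow (3)$, where one must carefully verify that the a priori delicate preservation of all geometric realizations of connective $R$-modules actually collapses to the concrete condition of preserving $\pi_0$-surjections; once that is in hand, the remaining implications follow smoothly.
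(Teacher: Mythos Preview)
Your proposal is correct and matches the paper's approach: the paper's own proof is simply a citation to \cite[\S 8]{HA} together with the observation that in (4) one may assume $M$ and $N$ are connective via $\pi_0\map(P,M)\cong\pi_0\map(P,\tau_{\geq 0}M)$, which is precisely your reduction in $(3)\Rightarrow(4)$. Your outline is a faithful unpacking of what that reference contains, so there is nothing to add.
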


\begin{proof}
See \cite[\S 8]{HA}.  Note that, in (4), we may assume without loss of
generality that $M$ and $N$ are connective, as
$\pi_0\map(P,M)\cong\pi_0\map(P,\tau_{\geq 0} M)$.
\end{proof}

Proposition~\ref{prop:proj} shows that projective modules over
connective ring spectra behave in much the same way as projective
modules over ordinary rings.  In particular, it motivates the next
definition.

\begin{definition}
An $R$-module $M$ is said to be {\em shifted projective} if it is a
retract of a shifted free $R$-module. 
\end{definition}

We will only be interested in finite (shifted) projective $R$-modules.

\begin{definition}
A (shifted) projective $R$-module $M$ is {\em finite (shifted)
projective} if it is a retract of a finite (shifted) free $R$-module. 
\end{definition}

Note that finite (shifted) projective modules are perfect, since
finite (shifted) free modules are perfect and retracts of compact
objects are still compact.  We write $\Proj_R\subset \Perf_{R}$ for
the full subcategory of $\Perf_{R}$ consisting of the finite projective
$R$-modules, and we write $\Proj^\Sigma_R\subset \Perf_{R}$ for the full
subcategory of $\Perf_{R}$ consisting of the finite shifted projective
$R$-modules.  The following comparison provides a description of
$\Proj_R$ in terms of the discrete ring $\pi_0 R$.

\begin{proposition}
Let $R$ be a connective ring spectrum.
Then
\[
\pi_0 \colon \mathrm{Ho}(\Proj_R)\too\Proj_{\pi_0 R}
\]
is an equivalence of categories.
\end{proposition}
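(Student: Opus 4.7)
The plan is to verify essential surjectivity and full faithfulness separately, exploiting the fact that both $\Ho(\Proj_R)$ and $\Proj_{\pi_0 R}$ are, by construction, the closures under retracts of their full subcategories spanned by the finite free modules.

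For full faithfulness, I would first treat the free case: for each $m,n \geq 0$, we have $\Map_R(R^m,R^n) \simeq (R^n)^m$, so
\[
\pi_0 \Map_R(R^m,R^n) \cong M_{n \times m}(\pi_0 R) \cong \Hom_{\pi_0 R}((\pi_0 R)^m,(\pi_0 R)^n),
\]
with composition corresponding to matrix multiplication. Hence $\pi_0$ is fully faithful on finite free modules. For general $P,P' \in \Proj_R$, choose splittings $P \stackrel{i}{\to} R^m \stackrel{r}{\to} P$ and $P' \stackrel{i'}{\to} R^n \stackrel{r'}{\to} P'$ with $r i \simeq \id_P$ and $r' i' \simeq \id_{P'}$. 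Then $\Map_R(P,P')$ sits inside $\Map_R(R^m,R^n)$ as the image of the idempotent $\varphi \mapsto i' r' \varphi i r$, and an analogous identification holds for $\Hom_{\pi_0 R}(\pi_0 P, \pi_0 P')$. The isomorphism established in the free case intertwines the two idempotents, so full faithfulness passes to all retracts.

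For essential surjectivity, given $Q \in \Proj_{\pi_0 R}$, write $Q$ as the image of an idempotent $e \in M_n(\pi_0 R)$ for some $n \geq 0$. Using the identification $\pi_0 \End_R(R^n) \cong M_n(\pi_0 R)$ (which relies on the connectivity of $R$), lift $e$ to a morphism $\tilde{e} \colon R^n \to R^n$ satisfying $\tilde{e}^2 \simeq \tilde{e}$ in $\Ho(\Perf_R)$. Since $\Perf_R$ is an idempotent-complete stable $\infty$-category, the homotopy-coherent idempotent $\tilde{e}$ lifts to a genuine $\infty$-categorical idempotent and splits as $R^n \to P \to R^n$ with $P \in \Perf_R$. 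By construction $P$ is a retract of the connective module $R^n$, hence itself finite projective, so $P \in \Proj_R$; moreover $\pi_0 P$ is the image of $\pi_0 \tilde{e} = e$, which is $Q$.

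The main obstacle is the idempotent-lifting step: one must upgrade a ring-theoretic idempotent in $M_n(\pi_0 R)$ to a coherent $\infty$-categorical idempotent in $\Perf_R$ that actually splits. A concrete way to handle this is to work inside the ambient presentable $\infty$-category $\Mod_R$, where sequential colimits exist: the colimit of the telescope $R^n \stackrel{\tilde{e}}{\to} R^n \stackrel{\tilde{e}}{\to} \cdots$ realizes the splitting, and the resulting object $P$ remains compact because retracts of compact objects are compact. Connectivity of $R$ is crucial throughout, both to identify $\pi_0 \End_R(R^n)$ with $M_n(\pi_0 R)$ and to ensure that retracts of $R^n$ remain connective with well-behaved $\pi_0$.
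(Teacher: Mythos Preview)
Your argument is correct and follows the standard line; the paper itself does not give a proof but simply cites the result as standard (referring to \cite[2.12]{AG}), so there is no substantive comparison to make beyond noting that you have written out what the reference contains.

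One small terminological slip: after lifting $e$ to $\tilde{e}$ you call $\tilde{e}$ a ``homotopy-coherent idempotent,'' but at that stage it is only a \emph{homotopy} idempotent (an idempotent in $\Ho(\Perf_R)$). The passage from a homotopy idempotent to a coherent one is precisely the subtle step you flag in your final paragraph. Your telescope argument works, but the cleanest justification is the stability of $\Perf_R$: in any stable $\infty$-category, every homotopy idempotent extends to a coherent idempotent (see \cite[1.2.4.6]{HA}), after which idempotent-completeness of $\Perf_R$ yields the splitting directly. Either route is fine; just be careful not to conflate the two notions of idempotent when writing it up.
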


\begin{proof}
This is standard. For a proof, see \cite[2.12]{AG}, for instance.
\end{proof}

The first step towards computing $\pi_0(\KEnd(\Perf_{R}))$ of this
section is to prove that, for $R$ a connective ring spectrum, there is
an isomorphism $K_0(\End(\Proj_R))\cong K_0(\End(\Perf_{R}))$.  Here
to define $K_0(\End(\Proj_R)) = \pi_0 \KEnd(\Proj_R))$ we will specify
the structure of an $\i$-category with cofibrations on
$\End(\Proj_R)$.  First, we specify the cofibrations on $\Proj_R$ as
the maps $P \to P \coprod Q$ such that the cofiber (in $\Perf_{R}$) is
$Q$.  It is straightforward to check that this definition satisfies
the conditions of Definition~\ref{defn:infcof}.  Then we define a map
to be a cofibration in $\End(\Proj_R)$ if its image under the
forgetful functor $\End(\Proj_R) \to \Proj_R$ is a cofibration.

We begin by recalling the analogous result in the setting without
endomorphisms. 

\begin{theorem}\label{thm:computation}
For any connective ring spectrum $R$, the map
\[
i\colon K_0(\Proj_R)\too K_0(\Perf_{R}),
\]
induced by the inclusion $\Proj_R\to \Perf_{R}$, is an isomorphism.
\end{theorem}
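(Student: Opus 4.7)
The plan is to prove surjectivity and injectivity of $i$ separately. For surjectivity, observe that $\Perf_R \in \idemstabcat$ is by construction the smallest idempotent-complete stable subcategory of $\Mod_R$ containing $R$. Consequently, every perfect $R$-module $M$ admits a finite cell filtration by cofibrations $0 = M_0 \hookrightarrow M_1 \hookrightarrow \cdots \hookrightarrow M_n = M$ whose successive cofibers $M_j/M_{j-1}$ are of the form $\Sigma^{k_j} R^{n_j}$. Applying the relation $[\Sigma L] = -[L]$, valid in the $K_0$ of any stable $\infty$-category, yields $[M] = \sum_j (-1)^{k_j} n_j [R]$ in $K_0(\Perf_R)$, which lies in the image of $i$ since $[R] = i([R])$.

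For injectivity, I would construct a left inverse to $i$ by factoring through the heart $\mathcal{H}$ of the canonical bounded $t$-structure on $\Perf_R$, which is available because $R$ is connective and every perfect $R$-module has homotopy groups concentrated in a bounded range (as visible from the finite cell structure above). The heart $\mathcal{H}$ consists of perfect $R$-modules concentrated in homological degree zero, equivalently Eilenberg--Mac Lane spectra $HN$ for finitely presented $\pi_0 R$-modules $N$ such that $HN$ admits a finite resolution by finite projective $R$-modules. An Euler-characteristic argument using Barwick's theorem \cite[5.6.1]{Barwickexact} (in the spirit of the proof of Theorem~\ref{thm:agreement2}) provides an isomorphism $K_0(\mathcal{H}) \xrightarrow{\sim} K_0(\Perf_R)$, while Quillen's resolution theorem applied to the inclusion $\Proj_R \hookrightarrow \mathcal{H}$ yields $K_0(\Proj_R) \xrightarrow{\sim} K_0(\mathcal{H})$; its hypotheses are satisfied because objects of $\mathcal{H}$ admit finite $\Proj_R$-resolutions by definition, and $\Proj_R$ is closed under kernels of surjections in $\mathcal{H}$ by projectivity. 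Composing these two isomorphisms furnishes the desired inverse.

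The main obstacle is confirming that the heart of this $t$-structure really has the asserted form, which reduces to showing that whenever $HN$ is a perfect $R$-module the module $N$ admits a finite projective resolution by finite projective $R$-modules (equivalently, $N$ has finite projective dimension over $\pi_0 R$). This should follow from an inductive \emph{killing the kernel} construction: starting with a surjection from a finite free $R$-module onto $HN$, one considers the fiber, whose $\pi_0$ is a finitely generated $\pi_0 R$-module, and iterates; the finite cell structure of $HN$ provides the bound on the number of steps needed for the procedure to terminate.
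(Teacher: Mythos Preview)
Your surjectivity argument has a small gap: $\Perf_R$ is the \emph{idempotent completion} of the finite cell $R$-modules, so a general perfect module is only a retract of something with a finite free filtration, not such a thing itself.  This is easily repaired (the paper does so via a Tor-amplitude induction producing a filtration with shifted \emph{projective} quotients), but the ``consequently'' as written does not follow.

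The injectivity argument, however, has a genuine obstruction.  You assert that $\Perf_R$ carries a bounded $t$-structure whose heart consists of Eilenberg--Mac~Lane objects $HN$ that happen to be perfect over $R$.  For a general connective ring spectrum this heart is trivial.  Take $R=\bbS$: a perfect $\bbS$-module is a finite spectrum, and no nonzero Eilenberg--Mac~Lane spectrum is finite (for instance $H_*(H\bbZ;\mathbb{F}_p)$ is the dual Steenrod algebra, which is infinite-dimensional).  More concretely, the truncation $\tau_{\leq -1}(\Sigma^{-1}\bbS)\simeq\Sigma^{-1}H\bbZ$ is not perfect, so the standard $t$-structure does not restrict to $\Perf_\bbS$.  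Thus Barwick's heart theorem is unavailable here, and neither is Quillen's resolution theorem in the form you invoke.  The appeal to Lemma~\ref{lem:t} is misleading: that lemma is stated only for \emph{ordinary} rings $A$, where $HA$-perfect and $A$-perfect coincide.

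The paper avoids $t$-structures entirely.  It builds an explicit inverse $j\colon K_0(\Perf_R)\to K_0(\Proj_R)$ by induction on the length of the Tor-amplitude interval $[a,b]$ of a perfect module $M$: one finds a finite projective $P$ and a cofiber sequence $\Sigma^a P\to M\to N$ with $N$ of strictly shorter Tor-amplitude, sets $j([M])=(-1)^a[P]+j([N])$, and checks by a separate induction that the $K_0$-relations are respected.  The key input is \cite[2.13]{AG}, which guarantees both the existence of such filtrations and that Tor-amplitude zero forces $M$ to be a shifted projective.  This argument is elementary and uniform in $R$; your route, by contrast, would require regularity-type hypotheses on $R$ that the theorem does not assume.
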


\begin{proof}
We define a map
\[
j\colon K_0(\Perf_{R})\too K_0(\Proj_R)
\]
and check that it is inverse to $i$.
The idea is to show that any generator $[M]$ of $K_0(\Perf_{R})$ is a
signed sum of generators $[P]$ of $K_0(\Proj_R)$, and the proof is by
induction on the length $l=b-a$ of the Tor-amplitude of $M$
(see \cite{SGA6} or \cite{TT} for a discussion of Tor-amplitude in the
setting of derived categories and \cite[\S 2.4]{AG} for the
analogous treatment in the setting of modules over a ring spectrum). 

To this end, let $M$ be a perfect $R$-module.
By \cite[2.13.1]{AG}, there exists integers $a\leq b$ such that $M$
has Tor-amplitude contained in the interval $[a,b]$.  If $b-a=0$, then
by \cite[2.13.6]{AG}, $M\simeq\Sigma^a P$ for some finite projective
$R$-module $P$, in which case 
\[
[M]=(-1)^a[P]\in K_0(\Perf_{R})
\]
and we define
\[
j([M]):=(-1)^a[P]\in K_0(\Proj_R).
\]
This preserves the $K_0$-relations amongst $R$-modules with
Tor-amplitude of length $0$, because any such comes from a (split)
cofiber sequence of the form 
\[
\Sigma^a P\too\Sigma^a (P\oplus Q)\too\Sigma^a Q
\]
for some finite projective $R$-modules $P$ and $Q$ and
\[
[P]-[P\oplus Q]+[Q]=0\in K_0(\Proj_R).
\]
Inductively, suppose that we have defined $j$ on the subcategory of
$\Perf_{R}$ consisting of those perfect $R$-modules with Tor-amplitude
contained in an interval $[a,b]$ with $b-a< l$ for some positive
integer $l$, and let $M$ be a perfect $R$-module with Tor-amplitude
contained in an interval $[a,b]$ with $b-a=l$.  Then
by \cite[2.13.7]{AG}, there is a finite projective $R$-module $P$ and
a cofiber sequence 
\[
\Sigma^a P\too M\too N
\]
such that the cofiber $N$ has Tor-amplitude contained in $[a+1,b]$.
Since $\Sigma^a P$ and $N$ are perfect with Tor-amplitude of length
$b-a+1<l$, we set 
\[
(-1)^a j([P])-j([M])+j([N])=0\in K_0(\Proj_R).
\]
In other words, killing bottom homotopy groups by mapping in shifted
projectives results in a length $l$ filtration 
\[
M\simeq M_0\too M_1\too\cdots\too M_l\too M_{l+1}\simeq 0
\]
in which the filtration quotients $M_{k+1}/M_k\simeq\Sigma^{a+k+1}
P_i$, $0\leq k\leq l$, are shifts of (possibly trivial) finite
projective $R$-modules $P_k$. 

A separate induction implies that the resulting map
\[
j\colon \bigoplus_{M\in\pi_0\Perf_R}\bZ\to K_0(\Proj_R)
\]
respects the relations in $K_0$, as any relation amongst perfect
$R$-modules of Tor-amplitude of length not more than $l$ comes from a
cofiber sequence 
\[
L\too M\too N,
\]
and we may proceed on the length $k\leq l$ of the Tor-amplitude of $L$.
Indeed, if $k=0$ then $L$ is a finite shifted projective, in which
case the well-definedness is clear.  If $k>0$, then there is a finite
projective $P$ and a cofiber sequence $\Sigma^a P\to L\to L'$ such
that $L'$ has Tor-amplitude of length strictly less than $k$.
Defining $M'$ as the pushout of $L'\leftarrow L\to M$, we obtain a map
of cofiber sequences 
\[
\xymatrix{
L\ar[r]\ar[d] & M\ar[r]\ar[d] & N\ar[d]\\
L'\ar[r] & M'\ar[r] & N}
\]
in which the right vertical map is an equivalence.
This gives the relation $[L]-[M]=[L']-[M']$ in $K_0(\Perf_R)$, so that
\[
j([L])-j([M])-j([N])=j([L'])-j([M'])+j([N])=0
\]
by inductive hypothesis.  Since any relation in $K_0(\Perf_R)$
necessarily only involves finitely many perfect $R$-modules and
therefore has Tor-amplitude contained in some finite interval $[a,b]$,
it follows that $j$ is a well-defined homomorphism of abelian groups. 

It remains to show that $i$ and $j$ are inverse isomorphisms.
It is clear that for any finite projective $R$-module $P$,
$j(i([P]))=[P]$; likewise, if $M$ is a perfect $R$-module with
Tor-amplitude contained in an interval $[a,b]$ of length $b-a=l$, then 
\[
i(j([M]))=(-1)^{a_0}i([P_0])+\cdots
+(-1)^{a_l}i([P_l])=[\Sigma^{a_0}P_0]+\cdots +[\Sigma^{a_l}P_l]=[M] 
\]
since, by construction, $M$ admits a finite filtration with quotients
$\Sigma^{a_k}P_k$.
\end{proof}

The argument above demonstrates that any perfect $R$-module admits a
finite descending filtration whose quotients are finite shifted
projectives.  A similar argument, which is the content of the
following theorem, shows that objects of $\End(\Perf_{R})$ admit finite 
descending filtrations whose quotients are shifts of objects of
$\End\Proj_R$. 

\begin{theorem}\label{thm:pizero1}
For any connective ring spectrum $R$, the map
\[
i\colon K_0(\End(\Proj_R))\too \pi_0 K_0(\End(\Perf_{R})),
\]
induced by the inclusion $\Proj_R\to \Perf_{R}$, is an isomorphism.
\end{theorem}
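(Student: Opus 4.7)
The plan is to mimic the Tor-amplitude induction from the proof of Theorem~\ref{thm:computation} by constructing an inverse
\[
j\colon K_0(\End(\Perf_{R}))\too K_0(\End(\Proj_R))
\]
to $i$, the new ingredient being to promote the cofiber sequences of perfect modules obtained from \cite[2.13.6--2.13.7]{AG} to cofiber sequences in $\End(\Perf_{R})$ compatible with the endomorphism structure. In the base case, if $(M,\alpha)$ has Tor-amplitude $[a,a]$, then \cite[2.13.6]{AG} gives $M\simeq\Sigma^a P$ with $P\in\Proj_R$, and transporting $\alpha$ across this equivalence yields a morphism of the form $\Sigma^a\beta$ for a unique (up to homotopy) $\beta\in\End_R(P)$; we set $j([(M,\alpha)]):=(-1)^a[(P,\beta)]$.

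For the inductive step on an $(M,\alpha)$ of Tor-amplitude $[a,b]$ of length $l>0$, I would construct a cofiber sequence $\Sigma^a(F,\beta)\to(M,\alpha)\to(N,\gamma)$ in $\End(\Perf_{R})$ with $F$ a finite free $R$-module and $N$ of Tor-amplitude $[a+1,b]$. Pick any surjection $s\colon\pi_0 F\twoheadrightarrow\pi_a M$ of $\pi_0 R$-modules (possible since $\pi_a M$ is finitely generated), lift the induced endomorphism $\bar\alpha$ of $\pi_a M$ through $s$ to $\beta_0\colon\pi_0 F\to\pi_0 F$ (using projectivity of $\pi_0 F$), and use the identification $\pi_0\End_R(F)\cong\End_{\pi_0 R}(\pi_0 F)$ (which holds because $F$ is finite free) to lift $\beta_0$ to $\beta\colon F\to F$. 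Pick any $R$-linear lift $\tilde f\colon\Sigma^a F\to M$ of $s$. The crucial observation is that the obstruction $d(\tilde f):=\alpha\tilde f-\tilde f\circ\Sigma^a\beta$ lives in
\[
\pi_0\map_R(\Sigma^a F,M)\cong\mathrm{Hom}_{\pi_0 R}(\pi_0 F,\pi_a M),
\]
where the isomorphism is again by finite freeness of $F$, and equals $\bar\alpha s-s\beta_0$, which vanishes by construction. The fiber sequence
\[
\map_{\End(\Perf_{R})}((\Sigma^a F,\Sigma^a\beta),(M,\alpha))\too\map_R(\Sigma^a F,M)\xrightarrow{d}\map_R(\Sigma^a F,M)
\]
then produces a lift of $\tilde f$ to $\End(\Perf_{R})$, and the underlying cofiber $N$ has Tor-amplitude $[a+1,b]$ by \cite[2.13.7]{AG}. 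We set $j([(M,\alpha)]):=(-1)^a[(F,\beta)]+j([(N,\gamma)])$.

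With $j$ defined, the verifications that $j$ respects the $K_0$-relations and that $i\circ j=\mathrm{id}=j\circ i$ proceed by precisely the nested Tor-amplitude inductions of Theorem~\ref{thm:computation}, systematically replacing cofiber sequences of perfect modules by their $\End(\Perf_{R})$-enhancements using the same equivariant-lifting argument. The main obstacle is thus the construction of those enhancements, and its resolution hinges on the fact that for $F$ finite free, $\pi_0\map_R(\Sigma^a F,M)$ is fully controlled by $\pi_a M$, so that the discrete condition $\bar\alpha s=s\beta_0$ suffices to enforce $\alpha$-equivariance of $\tilde f$ at the level of homotopy classes.
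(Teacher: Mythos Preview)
Your proposal is correct and follows the same overall Tor-amplitude induction as the paper, but the mechanism you use to lift the endomorphism in the inductive step is genuinely different from what the paper does. The paper takes the cofiber sequence $\Sigma^a P\to M\to N$ directly from \cite[2.13.7]{AG}, observes (via Lemma~\ref{lem:connective}) that $P\to\Sigma^{-a}M$ is surjective on $\pi_0$, and then applies the projective lifting property of Proposition~\ref{prop:proj}(4) once: the composite $P\to\Sigma^{-a}M\xrightarrow{\Sigma^{-a}\alpha}\Sigma^{-a}M$ lifts through the surjection $P\to\Sigma^{-a}M$ to give $e\colon P\to P$ together with the required homotopy, i.e.\ a morphism $(\Sigma^a P,\Sigma^a e)\to(M,\alpha)$ in $\End(\Perf_R)$. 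Your route instead builds $\beta$ by hand at the level of $\pi_0$ (using freeness of $F$ to identify $\pi_0\End_R(F)\cong\End_{\pi_0 R}(\pi_0 F)$ and $\pi_0\map_R(\Sigma^a F,M)\cong\Hom_{\pi_0 R}(\pi_0 F,\pi_a M)$), and then invokes the equalizer fiber sequence for mapping spaces in $\End(\Perf_R)=\Fun(\Delta^1/\partial\Delta^1,\Perf_R)$ to promote the $\pi_0$-level commutativity to an honest morphism. The paper's argument is shorter and uses only the abstract lifting property of projectives; yours makes the ``everything is controlled by $\pi_a$'' heuristic completely explicit and isolates the fiber sequence $\map_{\End}\to\map_R\xrightarrow{d}\map_R$, which is a clean general statement worth recording. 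One small point: your citation of \cite[2.13.7]{AG} for the Tor-amplitude of your cofiber $N$ is slightly loose, since you are not using the specific map that result produces; but the conclusion does hold for any $\Sigma^a F\to M$ surjective on $\pi_a$, as one checks directly from the cofiber sequence tensored with a discrete $\pi_0 R$-module and right-exactness of $\pi_0$.
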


\begin{proof}
As before, we define an analogous map
\[
j\colon K_0(\End(\Perf_{R}))\too K_0(\End(\Proj_R))
\]
and check that it is inverse to $i$.
Again, the proof goes by induction on the length of the Tor-amplitude
of a given generator $[\alpha\colon M\to M]$ of $K_0(\End(\Perf_{R}))$.  If
$M$ has Tor-amplitude contained in $[a,b]$ with $a-b=0$, then
$M\simeq\Sigma^a P$ for some finite projective $R$-module $P$, and we
define 
\[
j([M\overset{\alpha}{\to} M])=(-1)^a[P\overset{e}{\to} P]
\]
where $e\simeq\Sigma^{-a}\alpha:P\to P$ is the shifted endomorphism.
This is well-defined because, given a map of cofiber sequences
\[
\xymatrix{
P\ar[d]^e\ar[r] & P\oplus Q\ar[d]^{\left(\begin{array}{cc} e & g\\ h & f\end{array}\right)}\ar[r] & Q\ar[d]^f\\
P\ar[r]  & P\oplus Q\ar[r] & Q\,,}
\]
we have that
\[
[P\overset{e}{\to} P]-[P\oplus Q\overset{\left(\begin{array}{cc} e & g\\ h & f\end{array}\right)}{\to} P\oplus Q]+[Q\overset{f}{\to} Q]=0\in K_0(\End(\Proj_R)).
\]
Inductively, suppose that $j$ is defined for all
$[M\overset{\alpha}{\to}M]$ such that $M$ has Tor-amplitude contained
in an interval $[a,b]$ of length not more than $l$.  Let $P$ be a
finite projective and $\Sigma^a P\to M\to N$ a cofiber sequence such
that $N$ has Tor-amplitude contained in $[a+1,b]$ (\cite[2.7]{AG}).
Taking homotopy, the resulting exact sequence 
\[
\pi_0(P)\to\pi_0(\Sigma^{-a} M)\to\pi_0(\Sigma^{-a} N)
\]
shows that $P\to\Sigma^{-a} M$ is surjective on $\pi_0$, since
$\pi_0(\Sigma^{-a} N)\cong 0$ by lemma \ref{lem:connective} below.  It
follows from proposition \ref{prop:proj} that there exists an
endomorphism $e$ of $P$ making the lower triangle in the diagram 
\[
\xymatrix{
P\ar[r]\ar[d]\ar[rd] & \Sigma^{-a}M\ar[d]\\
P\ar[r] & \Sigma^{-a}M}
\]
commutes; that is, there is a map from $\Sigma^a P\overset{\Sigma^a
e}{\to}\Sigma^a P$ to $M\overset{\alpha}{\to} M$ in $\End(\Perf_{R})$. 
Writing $\beta\colon N\to N$ for the cofiber of this map in $\End(\Perf_{R})$,
we see that $N$ is a perfect $R$-module with Tor-amplitude in the
interval $[a+1,b]$.  Repeating this process gives a filtration
\[
\xymatrix{
M=M_0\ar[r]\ar[d]^{\alpha_0} & M_1\ar[r]\ar[d]^{\alpha_1}  & \cdots\ar[r] & M_{l}\ar[r]\ar[d]^{\alpha_{l}} & M_{l+1}\simeq 0\ar[d]^{\alpha_{l+1}}\\
M=M_0\ar[r] & M_1\ar[r] & \cdots\ar[r] & M_l\ar[r] & M_{l+1}\simeq 0}
\]
of $M\overset{\alpha}{\to} M$ in $\End \Perf_{R}$ in which
$\alpha_0\simeq\alpha$ and $\alpha_{l+1}\simeq 0$, and the filtration
quotients are of the form 
\[
\xymatrix{
\Sigma^{a+k+1} P_k\ar[r]^{\Sigma^{a+k}e_k} & \Sigma^{a+k+1}P_k}
\]
for some $P_k\overset{e_k}{\to} P_k$ in $\End\Proj_R$, $0\leq k\leq l$.
The same argument as before shows that this is well-defined, and that
the resulting homomorpism $j\colon K_0(\End \Perf_{R})\to K_0(\End\Proj_R)$ is
inverse to $i$. 
\end{proof}

\begin{lemma}\label{lem:connective}
Let $R$ be a connective ring spectrum and let $M$ be a perfect $R$-module with Tor-amplitude contained in $[0,\infty]$.
Then $M$ is connective.
\end{lemma}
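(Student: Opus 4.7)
The plan is to argue by contradiction: since $M$ is perfect over the connective ring spectrum $R$, it is a retract of a finite cell $R$-module, hence its homotopy groups are bounded below. If $M$ is not connective, there is a minimal integer $a<0$ with $\pi_a M\neq 0$; assume such an $a$ exists and seek a contradiction.

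The main device will be the cofiber sequence of $R$-modules
\[
\tau_{\geq 1} R \too R \too H\pi_0 R,
\]
obtained from the Postnikov truncation of $R$ and the identification $\tau_{\leq 0} R \simeq H\pi_0 R$. Smashing with $M$ over $R$ yields a cofiber sequence of $R$-modules
\[
\tau_{\geq 1} R \otimes_R M \too M \too H\pi_0 R \otimes_R M.
\]
Applying the Tor-amplitude hypothesis to the discrete $R$-module $\pi_0 R$, we obtain $\pi_i(H\pi_0 R\otimes_R M)=0$ for all $i<0$; in particular $\pi_a(H\pi_0 R\otimes_R M)=0$.

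The key technical step is to verify that $\tau_{\geq 1} R\otimes_R M$ is $(a+1)$-connective, i.e.\ has vanishing homotopy in degrees $\leq a$. This uses the standard fact that, for a connective ring spectrum $R$, the relative smash product on $\Mod_R$ is right $t$-exact, meaning $\Mod_R^{\geq n}\otimes_R\Mod_R^{\geq m}\subseteq\Mod_R^{\geq n+m}$. Writing $M\simeq\Sigma^a(\Sigma^{-a}M)$ with $\Sigma^{-a}M$ connective, one has
\[
\tau_{\geq 1} R\otimes_R M \simeq \Sigma^a\bigl(\tau_{\geq 1} R\otimes_R \Sigma^{-a} M\bigr),
\]
and the bracketed term lies in $\Mod_R^{\geq 1}$, yielding the claimed connectivity bound.

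Combining these two inputs, the long exact sequence in homotopy gives an isomorphism $\pi_a M\cong\pi_a(H\pi_0 R\otimes_R M)=0$, contradicting the minimality of $a$. The main subtlety lies in justifying the connectivity estimate for $\tau_{\geq 1}R\otimes_R M$; the cleanest route is the $t$-exactness statement above, which can be found in Lurie's account of the canonical $t$-structure on modules over a connective $E_1$-ring, and which holds precisely because $R$ itself is in the connective part of the $t$-structure.
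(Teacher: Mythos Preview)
Your argument is correct and takes a genuinely different route from the paper's. The paper invokes the K\"unneth (Tor) spectral sequence
\[
\Tor^{\pi_* R}_p(\pi_* M,\pi_0 R)_q\Longrightarrow\pi_{p+q}(M\wedge_R H\pi_0 R),
\]
and argues that if $q<0$ is the least degree with $\pi_q M\neq 0$ (such a minimum exists because $M$ is perfect over a connective base), then the corner term at $(p,q)=(0,q)$ is $\pi_q M\neq 0$ and is a permanent cycle, producing a nonzero class in $\pi_q(M\wedge_R H\pi_0 R)$ and contradicting the Tor-amplitude hypothesis. You instead smash the Postnikov cofiber sequence $\tau_{\geq 1}R\to R\to H\pi_0 R$ with $M$ and use right $t$-exactness of $\otimes_R$ over a connective base to force $\pi_a(\tau_{\geq 1}R\otimes_R M)=\pi_{a-1}(\tau_{\geq 1}R\otimes_R M)=0$, so the long exact sequence yields $\pi_a M\cong\pi_a(H\pi_0 R\otimes_R M)=0$. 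Both arguments ultimately identify the bottom homotopy group of $M$ with that of $M\otimes_R H\pi_0 R$; your version is more direct and avoids any discussion of why the relevant $E_2$-term is nonzero and why differentials vanish, at the modest cost of quoting the $t$-exactness statement from Lurie. The spectral-sequence version has the advantage of being a one-line appeal to a standard tool, though as written in the paper it leaves the survival of the corner class implicit.
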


\begin{proof}
This follows from the convergent spectral sequence
\[
\Tor^{\pi_* R}_p(\pi_q M,\pi_0 R)\implies\pi_{p+q}(M\land_R H\pi_0 R);
\]
in particular, if $\pi_q M\neq 0$ for some $q<0$, then $\pi_q M\otimes_{\pi_* R}\pi_0 R\neq 0$ since $R$ is connective, giving nonzero classes in $\pi_q(M\land_R H\pi_0 R)$.
\end{proof}

Finally, we want to complete the computation by showing that
$K_0(\End\Proj_R)\cong K_0(\End\Proj_{\pi_0 R})$.  To do so, we
require a technical lemma about rigidifying homotopy commutative
triangles in an $\i$-category $\aC$.

\begin{lemma}\label{lem:2skel}
Let $\aC$ be an $\i$-category and let $K$ be a $2$-skeletal simplicial
set.  Then any diagram
\[
\sigma\colon K\too\N(\Ho(\aC)),
\]
lifts to a diagram $\tau\colon K\to\aC$ such that $\eta\circ\tau=\sigma$,
where $\eta:\aC\to\N(\Ho(\aC))$ denotes the unit of the localization
$\Ho\colon \Set_{\Delta}\to\Cat \colon \N$.  
\end{lemma}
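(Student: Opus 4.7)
The plan is to build $\tau$ by skeletal induction along the filtration $K^{(0)}\subseteq K^{(1)}\subseteq K^{(2)}=K$. At each stage the lift is chosen compatibly with what has already been constructed, using three basic facts about $\eta\colon\aC\to\N(\Ho(\aC))$: it is a bijection on $0$-simplices (since $\Ho(\aC)$ has the same objects as $\aC$); its fiber on $1$-simplices $x\to y$ is the set of equivalence classes $\pi_0\Map_\aC(x,y)$; and it encodes composition in $\Ho(\aC)$ as the existence of filled inner $2$-horns in $\aC$.

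First I would lift on the $0$-skeleton, which is forced and automatic. Next I would extend to the $1$-skeleton by choosing, for each non-degenerate edge $e$ of $K$ with $\sigma(e)=[\alpha]\in\pi_0\Map_\aC(x,y)$, a representative $\tilde\alpha$ in $\aC$; degenerate edges of $K$ must be sent to degenerate edges of $\aC$, which is consistent because $\eta$ commutes with degeneracies.

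The main step is the extension to the $2$-skeleton. For each non-degenerate $2$-simplex $s$ of $K$ whose faces have already been lifted to edges $\tilde f,\tilde g,\tilde h$ of $\aC$, the datum $\sigma(s)$ asserts precisely that $[\tilde f]\circ[\tilde g]=[\tilde h]$ in $\Ho(\aC)$. Concretely, I would fill the inner horn $\Lambda^2_1\to\aC$ determined by $\tilde f$ and $\tilde g$ to obtain a $2$-simplex of $\aC$ whose remaining face is some edge $\tilde h'\colon x\to z$ satisfying $[\tilde h']=[\tilde h]$, so that $\tilde h\simeq\tilde h'$ in the mapping space. A standard consequence of inner-horn fillability in quasicategories (see \cite[\S 1.2.4]{HTT}) is that the existence of a $2$-simplex with two prescribed sides depends only on the homotopy class of the opposing edge, so there exists a $2$-simplex of $\aC$ with faces exactly $\tilde f,\tilde g,\tilde h$. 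Degenerate $2$-simplices of $K$ are handled by the degeneracies of the already-chosen edge lifts.

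The main potential obstacle is coherence across $2$-simplices of $K$ that share faces, but this is handled because the $1$-skeletal choices are fixed once and for all and the existence of a compatible $2$-simplex filler is a \emph{property} (the composition relation in $\Ho(\aC)$) rather than a further choice. Since $K$ is $2$-skeletal there are no higher-dimensional coherences to verify, and the lifts $\tau_0,\tau_1,\tau_2$ assemble into a well-defined map of simplicial sets $\tau\colon K\to\aC$ satisfying $\eta\circ\tau=\sigma$.
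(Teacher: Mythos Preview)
Your proof is correct and follows essentially the same skeletal-induction strategy as the paper: lift the $0$- and $1$-skeleta by choosing representatives, then extend over nondegenerate $2$-simplices. The difference lies in how the $2$-simplex lifting step is justified. The paper passes through the simplicial category model, using the equivalence $\N(\mathfrak{C}[\aC]^{\mathrm{fib}})\to\aC$ so that a $2$-simplex becomes concretely a pair of composable maps together with a chosen homotopy to the third edge; the existence of such a homotopy is then immediate from the hypothesis that the triangle commutes in $\Ho(\aC)$. You instead stay entirely in the quasicategorical setting and produce the required $2$-simplex by first filling the inner horn $\Lambda^2_1$ and then correcting the third edge via a further inner-horn argument (implicitly a $\Lambda^3_1$ fill). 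Your route is more self-contained and avoids the auxiliary simplicial-category machinery, at the cost of leaving the horn-filling manipulation to the reader; the paper's route makes the bookkeeping of ``objects, maps, and a homotopy'' completely explicit but requires invoking the comparison between quasicategories and simplicial categories. Either way, the key point---that no higher coherence is needed because $K$ is $2$-skeletal---is the same.
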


\begin{proof}
First suppose $K=\Delta^2$ and that $\aC=\N(\aC')$ for some fibrant simplicial category $\aC'$ such that $\Ho(\aC')\cong\Ho(\aC)$.
By adjunction, a $2$-simplex of $\N(\aC')$ is a map $\tau:\mathfrak{C}[\Delta^2]\to\aC'$, which is to say objects $\tau_i$, $0\leq i\leq 2$, maps $\tau_{ji}:\tau_i\to \tau_j$, $0\leq i<j\leq 2$, and a homotopy $\tau_{210}:\tau_{20}\to \tau_{21}\circ \tau_{10}$ in $\map_{\aC'}(\tau_0,\tau_2)$.
Since we're given a map $\sigma:\mathfrak{C}[\Delta^2]\to\Ho(\aC')$, we have homotopy classes of maps $\sigma_{ji}\in\pi_0\map_{\aC}(\sigma_i,\sigma_j)$ such that $\sigma_{20}=\sigma_{21}\circ\sigma_{10}$.
Taking $\tau_i=\sigma_i$, we may choose representative $\tau_{ji}:\tau_i\to\tau_j$ of $\sigma_{ji}$, and as the two resutling maps $\tau_{20}$ and $\tau_{21}\circ\tau_{10}$ from $\tau_0$ to $\tau_2$ are homotopic, we may also choose a $1$-simplex $\tau_{210}:\Delta^1\to\map_{\aC'}(\tau_0,\tau_2)$ realizing this.

For the general case, note that there exists a categorical equivalence
\[
f:\N(\mathfrak{C}[\aC]^\mathrm{fib})\to\aC
\]
which we may suppose is an isomorphism on homotopy categories.
First lift the $1$-skeleton $\sk_1 K$ to $\N(\mathfrak{C}[\aC]^\mathrm{fib})$ by choosing representative for homotopy classes of arrows in $\aC$, and then extend this to the $2$-skeleton by choosing lifts of each $2$-simplex compatibly with the chosen lifts on the boundary.
Composing with $f$ then gives the desired lift to $\aC$.
\end{proof}

\begin{corollary}\label{cor:surj}
Let $\aC$ be an $\i$-category. Then the canonical map
\[
\Ho(\End(\aC))\too\End(\Ho(\aC))
\]
is surjective on equivalence classes of arrows.
\end{corollary}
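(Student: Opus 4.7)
The plan is to reduce the statement to a direct application of Lemma~\ref{lem:2skel} by packaging a morphism in $\End(\Ho(\aC))$ as a diagram out of a suitable $2$-dimensional simplicial set. Recall that for any $\i$-category $\aD$, a morphism in $\End(\aD) = \Fun(\Delta^1/\partial\Delta^1, \aD)$ is by definition a $1$-simplex of this functor $\i$-category, equivalently a map of simplicial sets $K \too \aD$ with
\[
K := \Delta^1 \times (\Delta^1/\partial\Delta^1).
\]
Since $\Delta^1/\partial\Delta^1$ has dimension $1$, the product $K$ has dimension at most $2$, hence is $2$-skeletal.

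Given a morphism $\phi$ in $\End(\Ho(\aC))$ from $(x, [\alpha])$ to $(y, [\beta])$, consisting of $[f]\colon x\to y$ in $\Ho(\aC)$ with $[f][\alpha]=[\beta][f]$, one can assemble $\phi$ into a map $\sigma\colon K \too \N(\Ho(\aC))$: the two copies of $\Delta^1/\partial\Delta^1$ record $[\alpha]$ and $[\beta]$, the longitudinal $\Delta^1$ records $[f]$, and the two $2$-simplices of $K$ witness the naturality equation. Applying Lemma~\ref{lem:2skel} to this $\sigma$ produces a lift $\tau\colon K \too \aC$ along the canonical functor $\aC \too \N(\Ho(\aC))$, which is precisely a morphism $F$ in $\End(\aC)$ from some $(x,\alpha)$ to some $(y,\beta)$, where $\alpha$ and $\beta$ are the endomorphisms in $\aC$ selected by $\tau$ (necessarily lifting $[\alpha]$ and $[\beta]$).

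Finally, since $\tau$ was chosen so that its image under $\aC \too \N(\Ho(\aC))$ recovers $\sigma$, the homotopy class $[F]$ in $\Ho(\End(\aC))$ is carried by the canonical functor $\Ho(\End(\aC)) \too \End(\Ho(\aC))$ exactly to $\phi$, establishing surjectivity on equivalence classes of arrows. There is no serious obstacle: the only nontrivial input is the coherent lift of the naturality square and its two $2$-simplex witnesses from $\N(\Ho(\aC))$ to $\aC$, and this is handled by Lemma~\ref{lem:2skel} precisely because $K$ is $2$-skeletal.
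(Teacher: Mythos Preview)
Your proof is correct and follows essentially the same approach as the paper: both arguments observe that $K=\Delta^1\times(\Delta^1/\partial\Delta^1)$ is $2$-skeletal and then invoke Lemma~\ref{lem:2skel} to lift a map $K\to\N(\Ho(\aC))$ to $\aC$. The paper phrases this a bit more tersely via the adjunction identifying $\pi_0\map(K,\aC)$ and $\pi_0\map(K,\N(\Ho(\aC)))$ with the respective sets of equivalence classes of arrows, but the content is the same.
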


\begin{proof}
Since $\Delta^1\times\Delta^1/\partial\Delta^1$ is $2$-skeletal, the map
\[
\pi_0\map(\Delta^1\times\Delta^1/\partial\Delta^1,\aC)\too\pi_0\map(\Delta^1\times\Delta^1/\partial\Delta^1,\N(\Ho(\aC)))
\]
is surjective by lemma \ref{lem:2skel}.
But, by adjunction, the source is isomorphic to the set of equivalence classes of arrows in $\Ho(\End(\aC))$, and the target is isomorphic to the set of equivalence classes of arrows in $\End(\Ho(\aC))$.
\end{proof}

\begin{proposition}\label{prop:hoendvsendho}
If $R$ is a connective ring spectrum, then the canonical functor
\[
i\colon K_0(\End\Proj_R))\too K_0(\End\Ho(\Proj_R))
\]
is an isomorphism.
\end{proposition}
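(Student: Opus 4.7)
The plan is to construct an explicit inverse $j\colon K_0(\End(\Ho(\Proj_R)))\to K_0(\End(\Proj_R))$ to $i$, modeled on the strategy used in the proof of Theorem~\ref{thm:pizero1}. The crucial input is Corollary~\ref{cor:surj}, which guarantees that every endomorphism in $\Ho(\Proj_R)$ lifts to an endomorphism in $\Proj_R$. Combined with the observation that any two lifts $\tilde\alpha_1,\tilde\alpha_2$ of a given $\alpha$ represent the same class in $\pi_0\map_{\Proj_R}(M,M)$ and are therefore homotopic as morphisms of $\Proj_R$, they yield equivalent objects of the functor $\i$-category $\End(\Proj_R)=\Fun(\Delta^1/\partial\Delta^1,\Proj_R)$, so their classes in $K_0$ coincide.

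First I would verify that $i$ is surjective: for any generator $[(M,\alpha)]$ of $K_0(\End(\Ho(\Proj_R)))$, Corollary~\ref{cor:surj} produces a lift $\tilde\alpha\colon M\to M$ in $\Proj_R$ so that $i([(M,\tilde\alpha)])=[(M,\alpha)]$. Next I would define $j([(M,\alpha)]):=[(M,\tilde\alpha)]$ on generators using some choice of lift, with independence of that choice immediate from the uniqueness-up-to-homotopy observation. To check that $j$ respects relations, I note that every generating relation of $K_0(\End(\Ho(\Proj_R)))$ comes from a split short exact sequence $(P,e)\to(P\oplus Q,\gamma)\to(Q,f)$ in the exact category $\End(\Proj_{\pi_0 R})$, in which $\gamma$ is forced to have upper triangular matrix form $\begin{pmatrix}e & g\\ 0 & f\end{pmatrix}$ relative to the biproduct decomposition. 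I would lift $e$, $f$, and $g$ independently to $\tilde e$, $\tilde f$, $\tilde g$ in $\Proj_R$, and assemble them into $\tilde\gamma=\begin{pmatrix}\tilde e & \tilde g\\ 0 & \tilde f\end{pmatrix}$ using the canonical product decomposition of $\map(P\oplus Q,P\oplus Q)$ furnished by the biproduct structure of the additive $\i$-category $\Proj_R$. The resulting sequence $(P,\tilde e)\to(P\oplus Q,\tilde\gamma)\to(Q,\tilde f)$ is then a cofiber sequence in $\End(\Proj_R)$, producing exactly the required relation.

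Finally, $i\circ j=\id$ is tautological by construction, while $j\circ i=\id$ follows from the same uniqueness-up-to-homotopy of lifts used in the well-definedness step. The main obstacle is the lifting step for $\gamma$: one must ensure that the $\i$-categorical assembly $\tilde\gamma$ actually fits into a cofiber sequence of $\End(\Proj_R)$, which requires that the lower-left zero entry of $\gamma$ lifts to the genuine zero morphism so that the upper triangular form is preserved. This is handled by the biproduct observation above, which realizes $\map(P\oplus Q,P\oplus Q)$ as a literal product over the four matrix slots, so independent lifts of each entry assemble unambiguously and the choice of the literal zero lift in the lower-left preserves the split cofiber structure.
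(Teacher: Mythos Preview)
Your argument is correct. Both your proof and the paper's hinge on the same two ingredients: that objects of $\End(\Ho(\Proj_R))$ lift to $\End(\Proj_R)$ (with any two lifts equivalent), and that exact sequences lift. The difference lies in how the second point is handled. You lift an exact sequence by hand, decomposing the endomorphism $\gamma$ of $P\oplus Q$ as an upper-triangular matrix via the biproduct identification $\map(P\oplus Q,P\oplus Q)\simeq\prod_{i,j}\map(-,-)$ and lifting each entry separately; this is elementary and entirely self-contained, never invoking Corollary~\ref{cor:surj} in any essential way (the lifting of a single endomorphism is just the definition of $\Ho$). The paper instead argues more abstractly: it observes that an element of $\iSdot[2](\End\Proj_R)$ is determined up to contractible choice by the arrow $\Delta^1\to\End(\Proj_R)$ underlying the cofibration, sets up a cartesian square comparing $\iSdot[2]$ with arrow spaces, and then invokes Corollary~\ref{cor:surj} to lift arrows in $\End$. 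Your approach is more explicit and avoids the machinery of Lemma~\ref{lem:2skel}, at the cost of relying on the specific split form of cofibrations in $\Proj_R$; the paper's approach is terser and would generalize more readily to situations where cofibrations are not visibly split.

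One small quibble: you cite Corollary~\ref{cor:surj} for the statement that an endomorphism in $\Ho(\Proj_R)$ lifts to $\Proj_R$, but that is immediate from the definition of the homotopy category. Corollary~\ref{cor:surj} is the stronger assertion that \emph{arrows} in $\End(\Ho(\aC))$ lift, which is what the paper actually needs for its cartesian-square argument but which your matrix-entry approach circumvents.
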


\begin{proof}
Using the presentation for $K_0$ of \ref{eqn:K0}, we first observe that $i$ is surjective, as
\[
\End(\Proj_R)^\simeq\too\N(\End(\Ho(\Proj_R)))^\simeq
\]
is surjective on $\pi_0$.
To see that $i$ is also injective, we must show that any exact sequence in $\End\Ho(\Proj_R)$ lifts to an exact sequence in $\End\Proj_R$.
Since an exact sequence in $\End\Proj_R$ is in particular a cofiber
sequence, any exact sequence in $\iSdot[2](\End\Proj_R)$ is determined
(up to contractible ambiguity) by a suitable arrow
$\Delta^1\to\End\Proj_R$.  Thus, the vertical fibers in the commutative square
\[
\xymatrix{
\iSdot[2](\End\Proj_R)^\simeq\ar[r]\ar[d]
& \iSdot[2](\End\N\Ho(\Proj_R))^\simeq\ar[d]\\ 
\map(\Delta^1,\End\Proj_R)\ar[r] & \map(\Delta^1,\N\Ho(\Proj_R))\,,}
\]
in which the vertical maps are the restrictions along
$\Delta^{\{0,1\}}\to\Delta^2$, are contractible, so the diagram is
cartesian.  But the bottom horizontal map is surjective on $\pi_0$ by
corollary \ref{cor:surj}, so the top horizontal map must be surjective
on $\pi_0$ as well. 
\end{proof}

\begin{corollary}\label{cor:comparison}
The map
\[
\End\Ho(\Proj_R)\too\End\Proj_{\pi_0 R}
\]
induced by the equivalence $\pi_0 \colon \Ho(\Proj_R)\simeq\Proj_{\pi_0
R}$ is a $K_0$-isomorphism. 
\end{corollary}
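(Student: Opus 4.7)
The plan is to reduce this to the observation that an equivalence of ordinary categories gives an equivalence of endomorphism categories, and that any such equivalence respecting the cofibration structure induces an isomorphism on $K_0$.

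First, recall that the construction $\cC \mapsto \End(\cC) = \Fun(\Delta^1/\partial\Delta^1, \cC)$ is functorial. Since the equivalence $\pi_0 \colon \Ho(\Proj_R) \to \Proj_{\pi_0 R}$ is an equivalence of ordinary $1$-categories, postcomposition yields an equivalence of $1$-categories
\[
\End \Ho(\Proj_R) \simeq \End \Proj_{\pi_0 R}.
\]
(One can view both sides as hom-categories out of $\underline{\bN}$, and $\Fun(\underline{\bN},-)$ preserves equivalences.)

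Next, I would verify that this functor is compatible with the cofibration structures introduced before Theorem~\ref{thm:computation}. The cofibrations on both $\End \Ho(\Proj_R)$ and $\End \Proj_{\pi_0 R}$ are defined via the forgetful functor to the underlying category of (homotopy classes of) projective modules, where the cofibrations are the split monomorphisms with projective cofiber. Because $\pi_0$ is an equivalence of $1$-categories, it preserves finite coproducts and detects split monomorphisms; hence it sends cofibrations to cofibrations and its inverse does the same. Similarly, the class of exact sequences (split cofiber sequences with prescribed cofiber) is preserved in both directions.

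Finally, an equivalence of categories with cofibrations induces a termwise equivalence on the $\Sdot$-construction, and in particular an isomorphism on $\pi_0 S_2 \to \pi_0(-)^{\simeq}$. Using the presentation of $K_0$ from \eqref{eqn:K0}, we conclude that the induced map on $K_0$ is an isomorphism. The only subtle step is the one in the previous paragraph, checking that the cofibration structures transport across $\pi_0$, but this is immediate once one remembers that the structure is defined entirely through coproducts and the forgetful functor, both of which an equivalence preserves strictly on the nose.
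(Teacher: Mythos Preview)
Your argument is correct and is essentially the argument the paper has in mind: since $\pi_0\colon \Ho(\Proj_R)\to\Proj_{\pi_0 R}$ is an equivalence of ordinary categories, applying $\End=\Fun(\underline{\bN},-)$ yields an equivalence of endomorphism categories compatible with the (split-monomorphism) cofibration structures, and hence an isomorphism on $K_0$ via the presentation \eqref{eqn:K0}. The paper's own proof is a one-liner (``immediate from proposition~\ref{prop:hoendvsendho}''), which is arguably a mis-citation---the relevant input is really the equivalence $\Ho(\Proj_R)\simeq\Proj_{\pi_0 R}$ established just before---so your more explicit version, checking that the cofibration structures transport along the equivalence, is if anything cleaner than what the paper records.
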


\begin{proof}
This is immediate from proposition \ref{prop:hoendvsendho}.
\end{proof}

Finally, assembling the comparisons of Corollary~\ref{cor:comparison},
Proposition~\ref{prop:hoendvsendho}, and Theorem~\ref{thm:pizero1}, we
obtain the following:

\begin{theorem}\label{thm:endcomp}
For every connective ring spectrum $R$ one has an isomorphism
\[
K_0(\End(\Perf_{R})) \cong K_0(\End(\mathbf{P}(\pi_0(R)))) = K_0(\End(\pi_0 R))
\]
of abelian groups.
\end{theorem}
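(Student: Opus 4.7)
The plan is essentially to compose the three isomorphisms that have been built up over the course of this section. Concretely, Theorem~\ref{thm:pizero1} identifies $K_0(\End(\Proj_R))$ with $K_0(\End(\Perf_R))$ via the inclusion $\Proj_R \hookrightarrow \Perf_R$; Proposition~\ref{prop:hoendvsendho} identifies $K_0(\End(\Proj_R))$ with $K_0(\End(\Ho(\Proj_R)))$ via the canonical map from the $\i$-category to its homotopy category; and Corollary~\ref{cor:comparison} identifies $K_0(\End(\Ho(\Proj_R)))$ with $K_0(\End(\Proj_{\pi_0 R}))$ by transport along the equivalence of ordinary categories $\pi_0 \colon \Ho(\Proj_R) \xrightarrow{\sim} \Proj_{\pi_0 R}$.

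To finish, I would observe that over the ordinary ring $\pi_0 R$, the $\i$-category $\Proj_{\pi_0 R}$ is nothing but the (nerve of the) classical exact category $\mathbf{P}(\pi_0 R)$ of finitely generated projective $\pi_0 R$-modules, with the same notion of short exact sequences. Hence $K_0(\End(\Proj_{\pi_0 R}))$ coincides with the classical Grothendieck group $K_0(\End(\mathbf{P}(\pi_0 R)))$ (this is the $\pi_0$-shadow of the agreement result, Theorem~\ref{thm:agreement2}, applied with $A = \pi_0 R$). Composing these four identifications yields the asserted isomorphism.

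There is no genuine obstacle remaining at this point: the substantive work has already been carried out in the preceding results, namely the inductive killing of Tor-amplitude by finite shifted projectives in Theorem~\ref{thm:pizero1}, the rigidification of homotopy-commutative $2$-cells via Lemma~\ref{lem:2skel} in Proposition~\ref{prop:hoendvsendho}, and the classical identification $\pi_0 \colon \Ho(\Proj_R) \simeq \Proj_{\pi_0 R}$ used in Corollary~\ref{cor:comparison}. Thus the proof reduces to chaining these isomorphisms in the order stated above.
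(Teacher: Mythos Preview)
Your proposal is correct and matches the paper's approach essentially verbatim: the paper states the theorem as an immediate consequence of assembling Theorem~\ref{thm:pizero1}, Proposition~\ref{prop:hoendvsendho}, and Corollary~\ref{cor:comparison}, exactly as you do. Your additional remark identifying $\Proj_{\pi_0 R}$ with the classical exact category $\mathbf{P}(\pi_0 R)$ is implicit in the paper's statement and is indeed just a notational observation.
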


\section{Natural operations}\label{sec:operations}
In this section we make use of the theory of noncommutative motives to classify all the natural transformations of the above functor \eqref{functor:End}; see Theorem~\ref{thm:natural}.
The constructions of \S\ref{sub:AG} are functorial on exact categories and hence (after the usual fixes associated to the fact that the passage from rings to exact categories of modules is only a pseudo-functor, e.g., see~\cite[9.1]{BM}) gives rise to a well-defined functor
\begin{eqnarray}
\KEnd(\mathbf{P}(-))\colon \mathrm{Rings} \too \cS
\end{eqnarray}
from ordinary rings to symmetric spectra.  In particular, we have the
classical functor
\begin{eqnarray}\label{eq:functors}
K_0(\End(\mathbf{P}(-)))\colon \mathrm{Rings} \too \mathrm{Ab}
\end{eqnarray}
from ordinary rings to abelian groups.
As explained by Almkvist in \cite[page~339]{Almkvist}, a major problem
is the computation of all the natural transformations of the above
functor \eqref{eq:functors}; see
also \cite{Stienstra1}\cite[\S1]{Stienstra}. Classical examples are
given by the Frobenius operations $F_n \colon [(M,\alpha)] \mapsto
[(M,\alpha^n)], n \geq 0$, and by the Verschiebung operations $V_n \colon
[(M, \alpha)] \mapsto [(M^{\oplus n}, V_n(\alpha))]$, where 
\begin{equation}\label{eq:cver}
V_n(\alpha):=\left[
\begin{array}{cccccccccc}
0 & \cdots & \cdots & 0  & (-1)^{n+1} \alpha \\
1 & \ddots &  &  \vdots& 0 \\
0 & \ddots & \ddots &  \vdots & \vdots\\
\vdots &\ddots&\ddots&0 &\vdots \\
0 &\cdots &0 &1&0\\
\end{array}\right]_{(n \times n)} \,.
\end{equation}
These natural operations can be generalized to the $\infty$-categorical
setting as follows: 
\begin{definition}{(Frobenius)}\label{defn:frob}
Let $f_n$ be the endofunctor of $\underline{\bbN}$ induced by the monoid map $n:\bbN\to\bbN$ which sends $m$ to $nm$.
Using the equivalence
\[
\Fun(\N(\underline{\bbN}),\cA)\too\Fun(\Delta^1/\partial\Delta^1,\cA)
\]
induced by the categorical equivalence $\Delta^1/\partial\Delta^1\to\N(\underline{\bbN})$, one obtains by
pre-composition with $f_n$ an exact functor
$f_n^\ast \colon \End(\cA) \to \End(\cA)$ and consequently a map of
symmetric spectra $\KEnd(\cA) \to \KEnd(\cA)$. This construction is
natural on $\cA$ and hence gives rise to a natural transformation
$F_n\colon \KEnd \Rightarrow \KEnd$ of the
functor \eqref{functor:End} that we call the {\em
$n^{\mathrm{th}}$-Frobenius operation}.  
\end{definition}

For an $\i$-category $\aC \in \icat$, there is a natural functor
$\aC \to \End(\aC)$, induced by projection $\Delta^1\partial\Delta^1\to\Delta^0$, that takes $x$ to $\id:x\to x$.
By precomposing with the forgetful functor $\End(\aC) \to \aC$, we obtain the composite functor
\begin{eqnarray}
\iota \colon \End(\aC) \to \aC \to \End(\aC)
\end{eqnarray}
which sends the endomorphism $\alpha:x\to x$ to $\id:x\to x$.
Moreover, given an $\i$-category $\aD$ with finite coproducts, given
functors $f_i \colon \aC \to \aD$ we can construct a functor
$\coprod f_i \colon \aC \to \aD$ as the composite
\[
\xymatrix{
\aC \ar[r] & \prod_i \aC \ar[r]^-{\prod f_i} & \prod_i \aD \ar[r] & \aD,
}
\]
where the last map is a choice of functorial coproduct.  Similarly,
given $\tau \in \Sigma_n$, we can permute the factors of this coproduct
by $\tau$.  We now use these constructions to generalize the
Verschiebung:

\begin{definition}{(Verschiebung)}\label{defn:versh}
For each $\infty$-category $\cA \in \idemstabcat$ there is an
endofunctor on $\End(\aA)$ define by applying the cyclic permutation
to the coproduct of $(n-1)$ copies of $\iota$ and one copy of
$(-1)^{n+1} \id$.  This functor gives rise to a natural transformation 
$\End(\aA) \to \End(\aA)$ and hence a natural transformation 
$V_n \colon \KEnd \rightarrow \KEnd$ that we call the {\em
$n^{\mathrm{th}}$-Verschiebung operation}.
\end{definition}

Our solution to the problem stated by Almkvist if the following: let 
\begin{equation*}
W_0(\bbZ[t]):= \left\{\frac{1+p_1(t)r + \cdots + p_i(t)r^i  +\cdots +
p_n(t)r^n}{1+q_1(t)r+ \cdots + q_j(t)r^j + \cdots  + q_m(t)r^m} \,|\,
p_i(t),q_j(t) \in \bbZ[t] \right\} 
\end{equation*}
be the multiplicative group of fractions of polynomials in the
variable $r$ with coefficients in $\bbZ[t]$ and constant term $1$. 

\begin{theorem}\label{thm:natural}
There is a canonical weak equivalence of spectra between the spectrum
$ \Nat(\KEnd,\KEnd)$ of natural transformations of the
functor \eqref{functor:End} and the spectrum 
$\KEnd(\Perf_{\bbS[t]})$. In particular, the abelian
group $\pi_0\Nat(\KEnd,\KEnd)$ of natural transformations up to
homotopy is isomorphic to 
\begin{equation*}
K_0\mathrm{End}(\Perf_{\bbS[t]}) \simeq
K_0\mathrm{End}(\mathbf{P}(\pi_0 \bbS[t])) \simeq
K_0\mathrm{End}(\mathbf{P}(\bbZ[t])) \simeq \bbZ \oplus
W_0(\bbZ[t])\,. 
\end{equation*}
Moreover, under these isomorphisms, the Frobenius operations $F_n$ are
identified with the elements $(1,1+r^nt)$ and the Verschiebung
operations $V_n$ with the elements $(n,1+rt^n)$.
\end{theorem}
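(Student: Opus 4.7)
The plan is to deduce the theorem from three main inputs: the corepresentation of $\KEnd$ by $\Umot(\Perf_{\bbS[t]})$ given by Theorem~\ref{thm:co-rep}, the computation $\pi_0 \KEnd(\Perf_{R}) \cong K_0 \mathrm{End}(\mathbf{P}(\pi_0 R))$ for connective $R$ from Theorem~\ref{thm:endcomp}, and Almkvist's classical identification $K_0 \mathrm{End}(\mathbf{P}(A)) \cong \bbZ \oplus W_0(A)$ for a commutative ring $A$.

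For the spectrum-level statement, the key is a Yoneda-style argument in the category of additive invariants. By Theorem~\ref{thm:co-rep}, $\KEnd$ is naturally equivalent to the corepresentable functor $\Map_{\Motadd}(\Umot(\Perf_{\bbS[t]}), \Umot(-))$. Because $\Umot$ is the universal additive invariant (equation~\eqref{eq:induced}), natural transformations $\KEnd \Rightarrow \KEnd$ are in bijection with natural transformations between the associated colimit-preserving functors on $\Motadd$, and the spectrally enriched Yoneda lemma then identifies the spectrum of such natural transformations with the endomorphism spectrum of the representing object. Combining these observations yields
\[
\Nat(\KEnd, \KEnd) \simeq \Map_{\Motadd}(\Umot(\Perf_{\bbS[t]}), \Umot(\Perf_{\bbS[t]})) \simeq \KEnd(\Perf_{\bbS[t]}),
\]
the last equivalence being Theorem~\ref{thm:co-rep} applied to $\cC = \Perf_{\bbS[t]}$.

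For the $\pi_0$ computation, I would apply Theorem~\ref{thm:endcomp} to the connective ring spectrum $\bbS[t]$, together with the identification $\pi_0 \bbS[t] \cong \bbZ[t]$ (since $\bbS[t]$ is the free associative $\bbS$-algebra on one generator, $\pi_0 \bbS[t]$ is the free associative $\bbZ$-algebra on one generator, namely $\bbZ[t]$), and then Almkvist's classical theorem. To identify the Frobenius and Verschiebung operations explicitly, I would trace the universal element through the chain of equivalences: the identity natural transformation corresponds under Yoneda to the class $[\bbS[t], \cdot t] \in \pi_0 \KEnd(\Perf_{\bbS[t]})$, which descends via $\pi_0$ to $[\bbZ[t], \cdot t] \in K_0 \mathrm{End}(\mathbf{P}(\bbZ[t]))$. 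Naturality then forces $F_n$ to correspond to the class $[\bbZ[t], \cdot t^n]$ and $V_n$ to the class $[\bbZ[t]^{\oplus n}, V_n(\cdot t)]$, where $V_n(\cdot t)$ is the companion matrix of~\eqref{eq:cver}, and an explicit determinant computation on this companion matrix should produce the claimed elements $(1, 1+r^n t)$ and $(n, 1+rt^n)$ of $\bbZ \oplus W_0(\bbZ[t])$.

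The main obstacle is this final identification step. The Almkvist isomorphism exists in several sign-variant forms (involving $\det(1 - r\alpha)$, $\det(1 + r\alpha)$, or multiplicative inverses inside $W_0(\bbZ[t])$), and the specific values $(1, 1 + r^n t)$ and $(n, 1 + rt^n)$ rigidify the choice of such a convention. One therefore has to pin down a single identification $K_0 \mathrm{End}(\mathbf{P}(\bbZ[t])) \cong \bbZ \oplus W_0(\bbZ[t])$ that is simultaneously compatible with both displayed formulas, and this bookkeeping — rather than any essentially new theoretical input — accounts for the bulk of the remaining work.
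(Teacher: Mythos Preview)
Your proposal is correct and follows essentially the same route as the paper: the spectrum-level equivalence via the universal property of $\Motadd$ together with corepresentability (the paper packages this as Lemma~\ref{lem:aux}), the $\pi_0$ computation via Theorem~\ref{thm:endcomp} and Almkvist's isomorphism, and then the identification of $F_n$ and $V_n$. The only minor difference is in this last step: rather than tracing the universal element and computing characteristic polynomials directly, the paper invokes Theorem~\ref{thm:agreement2} to reduce the $\infty$-categorical $F_n$ and $V_n$ to their classical counterparts on $\pi_0$ and then cites Almkvist's explicit formula $[(M,\alpha)]\mapsto([M],\det(\Id+\alpha r))$, which fixes the convention you were worried about.
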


\begin{proof}
The natural equivalence of spectra
$\Nat(\KEnd,\KEnd) \htp \KEnd(\Perf_{\bbS[t]})$ follows from
Lemma~\ref{lem:aux} below (with $E$ the functor \eqref{functor:End}).
The isomorphisms follow from Theorem~\ref{thm:endcomp} applied to
$R=\bbS[t]$, from the equality $\pi_0 \bbS[t] =\bbZ[t]$, and from
Almkvist's isomorphism (see \cite{Almkvist1})
\begin{eqnarray}\label{eq:almk}
K_0\mathrm{End}(\mathbf{P}(A)\stackrel{\cong}{\to} K_0(A) \oplus
W_0(A) && (M,\alpha)] \mapsto ([M], \mathrm{det}(\Id + \alpha
r)) \nonumber
\end{eqnarray}
applied to $A = \mathbb{Z}[t]$. The identifications of $F_n$ and $V_n$
as the elements in question also follows from the preceding
computation and Theorem~\ref{thm:agreement2}; on $\pi_0$, the operations of
Definitions~\ref{defn:frob} and \ref{defn:versh} give rise to the
classical operations on the $K$-theory of endomorphisms.
Specifically, on passage to $K_0$, it is clear that the operation of
Definition~\ref{defn:frob} takes the the class $[M,\alpha]$ to
$[M,\alpha^n]$.  Moreover, since for any connective ring spectrum $R$
and compact $R$-module $M$, $\Map_R(\vee_n M, \vee_n
M) \htp \prod_n \coprod_n \Map_R(M,M)$, on passage to $K_0$ the
operation of Definition~\ref{defn:versh} gives rise to the matrix
specified above in equation~\eqref{eq:cver}.  Now
Theorem~\ref{thm:agreement2} coupled with Almkvist's identification of
these operations in terms of the isomorphism~\eqref{eq:almk}
establishes the desired comparison.
\end{proof}

\begin{lemma}\label{lem:aux}
For every additive invariant $E\colon \idemstabcat \to \cS_\infty$
there is a natural equivalence of spectra 
\begin{equation}\label{eq:equiv}
\Nat(\KEnd,E) \to E(\Perf_{\bbS[t]})\,.
\end{equation}
\end{lemma}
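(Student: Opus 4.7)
The plan is to combine the co-representability of $\KEnd$ established in Theorem~\ref{thm:co-rep} with the universal property of $\Umot$ from \eqref{eq:induced}, and then conclude via the enriched Yoneda lemma in the presentable stable setting.

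First, I would recall that $\KEnd\colon \idemstabcat \to \cS_\infty$ is an additive invariant (by Theorem~\ref{thm:co-rep}), so by the universal property \eqref{eq:induced} it factors uniquely (up to equivalence) as
\[
\KEnd \simeq \widetilde{\KEnd} \circ \Umot
\]
for a colimit-preserving functor $\widetilde{\KEnd}\colon \Motadd \to \cS_\infty$; moreover, Theorem~\ref{thm:co-rep} identifies $\widetilde{\KEnd}$ with the co-representable functor $\Map_{\Motadd}(\Umot(\Perf_{\bbS[t]}),-)$. Similarly, because $E$ is additive by hypothesis, \eqref{eq:induced} produces a unique colimit-preserving $\bar{E}\colon \Motadd \to \cS_\infty$ with $E \simeq \bar{E}\circ \Umot$.

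Next, I would use that the equivalence \eqref{eq:induced} is an equivalence of $\infty$-categories, so it induces equivalences on mapping spectra. In particular,
\[
\Nat(\KEnd,E) \;\simeq\; \Map_{\Fun^{\L}(\Motadd,\cS_\infty)}\!\bigl(\Map_{\Motadd}(\Umot(\Perf_{\bbS[t]}),-),\, \bar{E}\bigr).
\]
Now I would invoke the enriched Yoneda lemma in the presentable stable setting: for any colimit-preserving functor $\bar{E}$ out of a presentable stable $\infty$-category $\Motadd$ and any object $X\in\Motadd$, evaluation at the identity induces a natural equivalence
\[
\Map_{\Fun^{\L}(\Motadd,\cS_\infty)}\!\bigl(\Map_{\Motadd}(X,-),\,\bar{E}\bigr)\;\stackrel{\sim}{\too}\;\bar{E}(X).
\]
Applying this with $X=\Umot(\Perf_{\bbS[t]})$ and $\bar{E}(\Umot(\Perf_{\bbS[t]}))=E(\Perf_{\bbS[t]})$ yields the desired equivalence \eqref{eq:equiv}, and unraveling the chain of identifications shows that the composite is precisely the evaluation-at-identity map $\eta\mapsto \eta_{\Perf_{\bbS[t]}}(\id)$, so it is natural in $E$.

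The main technical point (and the only real subtlety) is ensuring that the mapping spectra match up in step two: that is, that \eqref{eq:induced} is not merely an equivalence of underlying $\infty$-categories but of spectrally enriched ones, so that $\Nat$ (understood as a spectrum) on the additive side coincides with the mapping spectrum in $\Fun^{\L}(\Motadd,\cS_\infty)$. This is built into the construction of $\Motadd$ in \cite[\S 6]{BGT}, where the universal additive invariant is constructed as a localization inside a presentable stable $\infty$-category of spectral presheaves, so both sides inherit their natural spectral enrichments from the same ambient category, and \eqref{eq:induced} upgrades tautologically. Granting this, the rest of the argument is formal.
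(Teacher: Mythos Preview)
Your proposal is correct and follows essentially the same route as the paper: factor both $\KEnd$ and $E$ through $\Motadd$ via the universal property \eqref{eq:induced}, identify the induced $\overline{\KEnd}$ with the co-representable functor $\Map(\Umot(\Perf_{\bbS[t]}),-)$ using Theorem~\ref{thm:co-rep}, and then apply the (enriched) Yoneda lemma. The only difference is that you spell out the spectral-enrichment compatibility of \eqref{eq:induced} more explicitly than the paper does, but this is elaboration rather than a different argument.
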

\begin{proof}
As shown in Theorem~\ref{thm:co-rep}(i), the functor
$\KEnd\colon \idemstabcat \to \cS_\infty$ is an additive
invariant.  Hence, by equivalence \eqref{eq:induced} one 
obtains well-defined colimit preserving functors
\[
\overline{\KEnd},\overline{E} \colon \Motadd \to \cS_{\infty}
\]
satisfying $\overline{\KEnd} \circ \Umot \simeq \KEnd$ and
$\overline{E} \circ \Umot \simeq E$, as well as a natural
equivalence of spectra 
\begin{equation}\label{eq:equivalence1}
\Nat(\KEnd,E) \to \Nat(\overline{\KEnd},\overline{E})\,.
\end{equation}
By Theorem~\ref{thm:co-rep}(ii) the functor $\overline{\KEnd}$ is
co-represented in $\Motadd$ by the noncommutative motive 
$\Umot(\Perf_{\bbS[t]})$.  Hence, the $\infty$-categorical version
of the Yoneda lemma~\cite[\S 5.1.3]{HTT} provides an equivalence
of spectra 
\begin{equation}\label{eq:equivalence2}
\Nat(\overline{\KEnd},\overline{E}) \too \overline{E}(\Umot(\Perf_{\bbS[t]}) = E(\Perf_{\bbS[t]})\,.
\end{equation} 
By combining \eqref{eq:equivalence1} with \eqref{eq:equivalence2} we
obtain then the above equivalence \eqref{eq:equiv}. 
\end{proof}

\section{(Rational) Witt vectors}\label{sec:Witt}

Witt vectors were introduced in the thirties by
E.~Witt~\cite{Witt}. Given a commutative ring $A$, the {\em Witt ring}
$W(A)$ of $A$ is the abelian group of all power series of the form
$1+a_1r + a_2r^2 + \cdots$, with $a_i \in A$, endowed with the
multiplication $\ast$ determined by the equality
$(1-a_1r)\ast(1-a_2r)=(1-a_1a_2r)$. The {\em rational} Witt ring
$W_0(A)$ of $A$ consists of the elements which are quotients of polynomials; that is, those of the form 
$$ \left\{\frac{1+a_1r + \cdots + a_ir^i  +\cdots +
a_mr^m}{1+b_1r+ \cdots + b_jr^j + \cdots  + b_nr^n} \,|\, a_i,b_j \in
A\right\}\subset W(A);$$ 
consult~\cite{Hazewinkel} for further details.

Recall from \cite[\S2.3]{BGT}
that the category $\idemstabcat$ carries a symmetric monoidal
structure in which the tensor product $-\otimes^\vee-$ is
characterized by the property that functors out of
$\cA \idemtimes \cB$ are in correspondence with functors out of the
product $\cA \times \cB$ which preserve colimits in each variable.  The
$\idemtimes$-unit is the $\infty$-category $\Perf_{\bbS}$. 

\begin{proposition}\label{prop:coalgebra}
Let $M$ be a monoid in the $\i$-category of spaces. Then the
$\i$-category $\Perf_{\bbS[M]}$ of perfect modules for the monoid-ring
$\bbS[M]$ carries a canonical counital, coassociative, and cocommutative
coalgebra structure in $\idemstabcat$.  
\end{proposition}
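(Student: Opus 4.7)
The plan is to transport the tautological counital coassociative cocommutative coalgebra structure that every object of a cartesian $\i$-category carries along a chain of symmetric monoidal functors, landing in $(\idemstabcat, \idemtimes)$. The key observation is that if $\cC$ is an $\i$-category with finite products, then $(\cC, \times)$ is cartesian symmetric monoidal and every object $X \in \cC$ is canonically a counital, coassociative, cocommutative coalgebra, with comultiplication given by the diagonal $\Delta \colon X \to X \times X$ and counit given by the unique map $X \to \ast$ to the terminal object. Applying this to $\cC = \mathrm{Mon}(\Spaces)$, and noting that both the diagonal and the terminal map are automatically morphisms of monoids (since the product of monoids is computed componentwise), we obtain a canonical counital coassociative cocommutative coalgebra structure on $M$ with respect to the cartesian product in $\mathrm{Mon}(\Spaces)$.

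Next, I would invoke the fact that $\Sigma^\infty_+ \colon (\Spaces, \times) \to (\Sp, \wedge)$ is symmetric monoidal, so that it refines to a symmetric monoidal functor
\[
\bbS[-] \colon (\mathrm{Mon}(\Spaces), \times) \too (\Alg(\Sp), \wedge).
\]
Composing with the symmetric monoidal functor $\Perf_{(-)} \colon (\Alg(\Sp), \wedge) \to (\idemstabcat, \idemtimes)$ established in~\cite{BGT1}, we obtain a symmetric monoidal functor $\mathrm{Mon}(\Spaces) \to \idemstabcat$. Since any symmetric monoidal functor sends counital coassociative cocommutative coalgebras to counital coassociative cocommutative coalgebras, $\Perf_{\bbS[M]}$ inherits the desired coalgebra structure in $(\idemstabcat, \idemtimes)$. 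Unwinding the construction, the comultiplication is the functor induced by the ring map $\bbS[\Delta]\colon \bbS[M] \to \bbS[M \times M] \simeq \bbS[M] \wedge \bbS[M]$, while the counit is induced by the augmentation $\bbS[M] \to \bbS$ arising from $M \to \ast$; in particular, for $M = \bbN$ this recovers the maps $t \mapsto t \wedge t$ and $t \mapsto 1$ appearing in Theorem~\ref{thm:new-intro}.

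The main obstacle is the symmetric monoidality of the passage $R \mapsto \Perf_R$: one must verify that the equivalence $\Perf_{R \wedge S} \simeq \Perf_R \idemtimes \Perf_S$ is natural and coherent in $(R,S)$, so as to yield a genuine symmetric monoidal functor of $\i$-categories rather than only a pointwise equivalence. This is precisely what is handled in~\cite[\S 4]{BGT1}, so no new argument is required at this step; the remaining ingredients, namely that cartesian monoidal structures endow every object with a canonical counital cocommutative coalgebra structure and that symmetric monoidal functors preserve coalgebra objects, are standard pieces of the $\i$-categorical machinery from~\cite[\S 2]{HA}.
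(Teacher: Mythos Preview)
Your proof is correct and follows essentially the same approach as the paper: both arguments start from the observation that every object of a cartesian monoidal $\i$-category is canonically a counital cocommutative coalgebra (the paper phrases this dually, via $\CAlg(\aC^{\op})\simeq\aC^{\op}$ when the tensor in $\aC^{\op}$ is the coproduct), and then transport this structure along a chain of symmetric monoidal functors to $\idemstabcat$. The only cosmetic difference is that the paper routes the last step through the one-object spectral category functor $\Alg_\bbS\to\N(\Cat_\aS)[W^{-1}]$ and the symmetric monoidal localization $\N(\Cat_\aS)[W^{-1}]^\otimes\to\idemstabcat^\otimes$ of \cite[\S3]{BGT1}, whereas you invoke directly the symmetric monoidality of $R\mapsto\Perf_R$; these are two presentations of the same functor.
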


\begin{proof}
First recall that, if $\aC^\otimes$ is a symmetric monoidal
$\i$-category in which the tensor product is the coproduct, then the
projection $\CAlg(\aC^\otimes)\to\aC$ is an equivalence.  Taking $\aC$
to be the opposite of the $\i$-category of $A_\i$-spaces, we see that
$M$ has a coassociative and cocommutative coalgebra structure, so that
$\bbS[M]\simeq\Sigma^\infty_+M$ is a coassociative and cocommutative
object in $A_\i$-spectra.  Now, we note that the ``one-object spectral
category'' functor $\Alg_\bbS\to\N(\Cat_\aS)[W^{-1}]$ extends to a
symmetric monoidal $\Alg_\bbS^\otimes\to\N(\Cat_\aS)[W^{-1}]^\otimes$,
and according to \cite[\S3]{BGT1}, $\Cat_\i^{\mathrm{perf}\otimes}$ is
a symmetric monoidal localization of $\N(\Cat_\aS)[W^{-1}]^\otimes$.
Thus $\Perf_{\bbS[M]}$ inherits a canonical coassociative and
cocommutative coalgebra structure.
\end{proof}

Specializing to the case in which $M=\bbN$, the free $A_\i$-monoid on one generator, this amounts to saying that the diagonal $\Delta \colon \bbN \to \bbN \times \bbN$ and the projection $\bbN\to *$ are monoid maps.
Applying $\Sigma^\infty_+$, we obtain ring maps
\begin{eqnarray*}
\Delta\colon \bbS[t] \stackrel{t \mapsto t\wedge t}{\to} \bbS[t] \wedge \bbS[t] && \epsilon\colon \bbS[t] \stackrel{t=1}{\to} \bbS.
\end{eqnarray*}
As proved in \cite[\S4]{BGT1}, the category of noncommutative motives
$\Motadd$ carries a symmetric monoidal structure making the universal
additive invariant $\Umot\colon \idemstabcat \to \Motadd$ symmetric
monoidal. Hence, by Proposition \ref{prop:coalgebra}, the
noncommutative motive $\Umot(\Perf_{\bbS[t]})$ becomes a
counital coassociative coalgebra in $\Motadd$.  As a consequence, we
obtain the following result:

\begin{proposition}\label{prop:maplax}
The functor 
\[
\Map(\Umot(\Perf_{\bbS[t]}),-) \colon \Motadd \to \ispec 
\]
is lax symmetric monoidal.
\end{proposition}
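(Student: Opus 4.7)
The plan is to derive the lax symmetric monoidal structure on $\Map(\Umot(\Perf_{\bbS[t]}),-)$ from the counital cocommutative coalgebra structure on $\Umot(\Perf_{\bbS[t]})$, via a general principle about enriched mapping functors out of coalgebras. Throughout, I write $C:=\Umot(\Perf_{\bbS[t]})$, $\Delta\colon C\to C\otimes C$ and $\epsilon\colon C\to \Umot(\Perf_{\bbS})$ for its comultiplication and counit (inherited, via the symmetric monoidal structure on $\Umot$ from \cite[\S4]{BGT1}, from the coalgebra structure on $\Perf_{\bbS[t]}$ in $\idemstabcat$ given by Proposition~\ref{prop:coalgebra}); note that $\Umot(\Perf_{\bbS})$ is the $\otimes$-unit of $\Motadd$.

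First I would record the fact that, since $\Umot$ is a symmetric monoidal functor between symmetric monoidal $\i$-categories, it carries counital coassociative cocommutative coalgebras to counital coassociative cocommutative coalgebras. Combined with Proposition~\ref{prop:coalgebra}, this gives the desired coalgebra structure on $C$ in $\Motadd$.

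Next I would construct the lax symmetric monoidal structure. The unit map $\bbS\to\Map(C,\unit_{\Motadd})$ is the one adjoint to the counit $\epsilon$. The multiplication maps are the composites
\[
\Map(C,X)\wedge\Map(C,Y)\too\Map(C\otimes C,X\otimes Y)\xrightarrow{\Delta^\ast}\Map(C,X\otimes Y),
\]
where the first map is the comparison for the spectral enrichment of $\Motadd$ (which is itself lax symmetric monoidal in each variable because $\Motadd$ is a symmetric monoidal stable presentable $\i$-category, see \cite[\S4.2]{BGT}) and the second is precomposition with $\Delta$. The verification that these assemble to a coherent lax symmetric monoidal structure reduces to the coalgebra axioms: associativity of the structure follows from coassociativity of $\Delta$, the unit axioms from counitality of $\epsilon$, and the symmetry from cocommutativity of $\Delta$.

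To make the coherences precise at the $\i$-categorical level, rather than checking pentagons and hexagons by hand I would appeal to the formalism of \cite[\S 2]{HA}: in any symmetric monoidal $\i$-category $\cD^\otimes$, the $\i$-category of counital cocommutative coalgebras $\mathrm{coCAlg}(\cD)$ is equivalent, via $C\mapsto \Map_\cD(C,-)$, to an $\i$-category of lax symmetric monoidal copresheaves on $\cD$ with values in spectra (this is essentially the cooperadic dual of \cite[3.2.4.3]{HA} applied to the spectrally enriched $\Motadd$). The proposition then becomes the statement that $C$, being a counital cocommutative coalgebra in $\Motadd$, corresponds under this equivalence to a lax symmetric monoidal functor. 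The main obstacle will be to set up this enriched cooperadic framework carefully; once done, the proof is essentially formal.
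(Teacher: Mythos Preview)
Your proposal is correct in spirit and follows the same underlying idea as the paper: the coalgebra structure on $C=\Umot(\Perf_{\bbS[t]})$ makes $\Map(C,-)$ lax symmetric monoidal. The difference lies in how the $\infty$-categorical coherences are handled. You write down the lax structure maps explicitly and then, for the coherences, appeal to a somewhat informal equivalence between cocommutative coalgebras in $\Motadd$ and lax symmetric monoidal functors $\Motadd\to\ispec$, conceding that ``the main obstacle will be to set up this enriched cooperadic framework carefully.'' The paper sidesteps this obstacle: it observes that for any symmetric monoidal stable $\infty$-category $\aC^\otimes$, the mapping bifunctor $\aC^{\op}\times\aC\to\ispec$ is lax symmetric monoidal (this follows from the spectral Yoneda embedding being symmetric monoidal \cite[6.3.1.12]{HA} together with the relative adjoint functor theorem \cite[7.3.2.7]{HA}), and then simply restricts the first variable to a commutative algebra object of $\aC^{\op}$, i.e.\ a cocommutative coalgebra. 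Restricting a lax symmetric monoidal bifunctor along a commutative algebra in one slot immediately yields a lax symmetric monoidal functor in the other, with all coherences coming for free from the operadic formalism. This is both shorter and avoids the need to set up any bespoke ``cooperadic'' machinery; you might consider replacing your final paragraph with this argument.
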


\begin{proof}
More generally, we observe that if $\aC^{\otimes}$ is a symmetric
monoidal stable $\i$-category, the bifunctor $\aC^{\op} \times \aC \to \ispec$
specified as the adjoint of the spectral Yoneda embedding lifts to a
lax symmetric monoidal bifunctor
$(\aC^{\op}\times \aC)^{\otimes} \to \ispec^{\otimes}$. This follows
from the fact that the Yoneda embedding preserves limits and is
symmetric monoidal~\cite[6.3.1.12]{HA} by the relative
adjoint functor theorem~\cite[7.3.2.7]{HA}.  Specializing to the case
of $\Motadd$, this implies in particular that the functor
$\aC \to \ispec$ induced by restricting to a commutative algebra in
$\Motadd^{\op}$ (i.e., a counital coassociative coalgebra) is lax
symmetric monoidal.
\end{proof}

\begin{theorem}\label{thm:decomposition}
The ring maps $\bbS\stackrel{\iota}{\to}\bbS[t]$ and
$\bbS[t] \stackrel{t=0}{\to} \bbS$ give rise to a wedge sum
decomposition
$\Umot(\Perf_{\bbS[t]}) \simeq \Umot(\Perf_{\bbS}) \vee \mathbb{W}_0$
of counital coassociative cocommutative coalgebras in
$\Motadd$. Moreover, for every ordinary commutative ring $A$, we have
an isomorphism of commutative rings 
\begin{equation}\label{eq:Witt2}
\pi_0\Map(\mathbb{W}_0,\Umot(\Perf_{HA}))\simeq W_0(A)\,.
\end{equation}
\end{theorem}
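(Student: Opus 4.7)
The strategy is as follows. The ring maps $\iota\colon \bbS\to\bbS[t]$ and $\epsilon_0\colon\bbS[t]\to\bbS$ satisfy $\epsilon_0\circ\iota=\id_\bbS$, so applying the symmetric monoidal functor $\Umot\circ\Perf_{-}$ produces a split retract in $\Motadd$. Defining $\mathbb{W}_0$ as the fiber of $\Umot(\epsilon_0)$ and using stability of $\Motadd$, we obtain a direct sum decomposition $\Umot(\Perf_{\bbS[t]})\simeq\Umot(\Perf_{\bbS})\oplus\mathbb{W}_0$ in $\Motadd$.

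To upgrade this to a splitting of coalgebras, I first check that both $\iota$ and $\epsilon_0$ are maps of counital coassociative cocommutative coalgebras in $\idemstabcat$ with respect to the structures provided by Proposition~\ref{prop:coalgebra}. For $\iota$ this is automatic because it is induced by the unit inclusion $\{*\}\to\bbN$ between commutative monoids; for $\epsilon_0$ it is a direct verification using the explicit form of the comultiplication $t\mapsto t\wedge t$. Applying the symmetric monoidal functor $\Umot$, we conclude that $\Umot(\iota)\circ\Umot(\epsilon_0)$ is an idempotent endomorphism of $\Umot(\Perf_{\bbS[t]})$ in the $\infty$-category of counital coassociative cocommutative coalgebras in $\Motadd$. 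This latter $\infty$-category is additive --- coproducts are computed via block-diagonal comultiplications on direct sums in $\Motadd$ --- so splitting the idempotent yields the asserted wedge sum decomposition of coalgebras.

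For the ring isomorphism, I apply $\Map(-,\Umot(\Perf_{HA}))$ to the decomposition. Since this functor converts biproducts into products of spectra, Theorem~\ref{thm:main1} together with the co-representability of $K$-theory~\eqref{eq:corep1} yield
\[
\KEnd(\Perf_{HA})\simeq K(\Perf_{HA})\times\Map(\mathbb{W}_0,\Umot(\Perf_{HA})).
\]
Taking $\pi_0$ and comparing with the isomorphism $\pi_0\KEnd(\Perf_{HA})\simeq K_0(\End(\mathbf{P}(A)))\simeq K_0(A)\oplus W_0(A)$ supplied by Theorems~\ref{thm:endcomp},~\ref{thm:agreement2} and Almkvist's classical splitting produces the abelian group isomorphism $\pi_0\Map(\mathbb{W}_0,\Umot(\Perf_{HA}))\simeq W_0(A)$.

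The hard part is identifying the ring structure. Since $\mathbb{W}_0$ is a counital coassociative cocommutative coalgebra in $\Motadd$, the functor $\Map(\mathbb{W}_0,-)$ is lax symmetric monoidal by the same argument as Proposition~\ref{prop:maplax}, and since $HA$ is commutative $\Umot(\Perf_{HA})$ is a commutative algebra in $\Motadd$, so the mapping spectrum inherits a commutative ring structure. I would verify that this ring structure matches the Witt product by tracing the tensor pairing $(M,\alpha)\otimes(N,\beta)=(M\otimes N,\alpha\otimes\beta)$ on $\End(\mathbf{P}(A))$ through the composite isomorphism and invoking Almkvist's determinantal formula $[(M,\alpha)]\mapsto\det(\Id+\alpha r)$, which carries this pairing to the defining Witt product $(1+ar)\ast(1+br)=(1+abr)$ on generators.
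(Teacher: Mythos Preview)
Your overall strategy matches the paper's, but there is a genuine gap in the coalgebra step: the map $\epsilon_0\colon\bbS[t]\to\bbS$ given by $t=0$ is \emph{not} a map of \emph{counital} coalgebras. The counit on $\Perf_{\bbS[t]}$ supplied by Proposition~\ref{prop:coalgebra} comes from the unique monoid map $\bbN\to *$, i.e.\ from the ring map $t=1$, not $t=0$. Concretely, the counit $\eta_{\bbS[t]}$ sends $t^n\mapsto 1$, whereas $\id_{\bbS}\circ\epsilon_0$ sends $t^n\mapsto 0$ for $n\geq 1$; these disagree. So while $\epsilon_0$ does respect the comultiplication (the diagonal $t\mapsto t\wedge t$ is compatible with setting $t=0$), it does not respect the counit, and hence the idempotent $\Umot(\iota)\circ\Umot(\epsilon_0)$ is \emph{not} a morphism in the $\infty$-category of counital coalgebras. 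Your idempotent-splitting argument, as stated, takes place in the wrong category.

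The paper confronts exactly this issue. It first verifies (via explicit point-set diagrams) that $\iota$ and $(t=0)$ are coalgebra maps, carefully noting only that $\iota$ is counit-preserving. It then splits the idempotent in the $\infty$-category of coassociative cocommutative (not necessarily counital) coalgebras, using that colimits of coalgebras are computed in the underlying $\infty$-category, so the filtered-colimit splitting of the idempotent inherits a coalgebra structure. This produces $\mathbb{W}_0$ as a coassociative cocommutative coalgebra, but its counitality is not automatic: the paper establishes it separately by invoking \cite[\S 5.2.3]{HA} (a counit exists once a homotopy counit does) together with the point-set splitting argument of Proposition~\ref{prop:appendix} in the appendix, which manufactures the counit on the complementary summand from the retraction data. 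You should replace your ``the category of counital coalgebras is additive, so split there'' step with this two-stage argument. The rest of your proposal---the identification of $\pi_0$ via Theorems~\ref{thm:endcomp} and~\ref{thm:agreement2} and Almkvist's determinantal isomorphism, and the ring structure via lax monoidality of $\Map(\mathbb{W}_0,-)$---is in line with what the paper does.
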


\begin{proof}
Recall from Proposition~\ref{prop:coalgebra} that
$\bbS[t]\simeq\bbS[\bbN]$ and $\bbS\simeq\bbS[*]$ are counital,
coassociative, and cocommutative coalgebras.  It will be convenient to
briefly work with a point-set model of these coalgebras.  We can do
this using the suspension spectra $\Sigma^{\infty}_+ \bbN$ and
$\Sigma^{\infty}_+ *$ in the category of symmetric spectra; these
constructions represent coalgebras on the $\i$-categorical level
because $\Sigma^{\infty}_+ \bbN$ is cofibrant.  Now, note that the
diagrams    
$$
\xymatrix@-1pc{
\bbS \wedge \bbS \ar[r]^-{\iota \wedge \iota} & \bbS[t] \wedge \bbS[t] & \bbS[t] \wedge \bbS[t] \ar[rr]^-{(t=0)\wedge (t=0)} && \bbS \wedge \bbS & \bbS \ar@{=}[r] \ar@{=}[d] & \bbS \\
\bbS \ar[u]^-{1 \mapsto 1 \wedge 1} \ar[r]_-\iota & \bbS[t] \ar[u]_-{t \mapsto t \wedge t} & \bbS[t]\ar[rr]_-{(t=0)} \ar[u]^-{t \mapsto t \wedge t} && \bbS \ar[u]_-{1 \mapsto 1 \wedge 1} & \bbS \ar@{=}[u] \ar[r]_\iota & \bbS[t] \ar[u]_-{(t=1)}
}
$$
commute in the category of symmetric spectra.
Hence $\iota$ and $(t=0)$ descend to coalgebra maps with $\iota$
counit preserving.  Since $(t=0) \circ \iota =\Id$ the composition
$\iota \circ (t=0)$ is an idempotent of $\bbS[t]$.  For the remainder
of the proof we work with this idempotent in the $\i$-categorical
setting. 

We can realize this idempotent in terms of the composite maps
$* \to \mathbb{N} \to *$ and $\mathbb{N} \to * \to \mathbb{N}$ ---
these induce a retraction of coalegbras in $A_\infty$ monoids.
Applying $\Sigma^\infty_+$ and the composite functor
\begin{equation}\label{eq:composition}
\istabcat \stackrel{(-)^{\perf}}{\to} \idemstabcat \stackrel{\Umot}{\to} \Motadd\, ,
\end{equation}
we obtain then a retraction of coalgebra objects 
$$\Umot(\Perf_{\bbS}) \to \Umot(\Perf_{\bbS[t]}) \to \Umot(\Perf_{\bbS})\,.$$
Since $\Motadd$ is idempotent complete, we may split an idempotent
endomorphism $e\colon M\to M$ in $\Motadd$ by taking the filtered colimit 
\[
M_0\simeq\colim\{M\overset{e}{\too} M\overset{e}{\too}
M\overset{e}{\too}\cdots\}; 
\] 
taking the fiber $M_1\to M\to M_0$, we obtain a splitting $M\simeq
M_0\vee M_1$ of $M$, where $e$ restricts to the identity on $M_0$ and
zero on $M_1$.  Lastly, we note that colimits in the $\i$-category of
coalgebra objects are computed in the underlying $\i$-category.
Putting all of this together, we see that we can decompose
$\Umot(\Perf_{\bbS[t]})$ as a coproduct of $\Umot(\Perf_{\bbS})$
together with the coassociative and cocommutative coalgebra
object $\mathbb{W}_0$ of $\Motadd$.  To see that $\mathbb{W}_0$ is
counital, we observe that by~\cite[\S 5.2.3]{HA} it suffices to
produce a homotopy counit; the existence of such now follows from
proposition~\ref{prop:appendix} (more generally the arguments of the
appendix provide a splitting on the level of the homotopy category).

The identification of $\pi_0\Map(\mathbb{W}_0,\Umot(\Perf_{HA}))\simeq
W_0(A)$ follows from the same considerations as in the argument for
Theorem~\ref{thm:natural}.  Specifically, we can identify
$\pi_0 \Map(\Perf_{\bbS[t]}, \Umot(\Perf_{HA})) \cong
K_0\End(\Perf_{HA})$ as $K_0(A) \oplus W_0(A)$, and Almkvist's
results~\cite[pages 2-3]{Almkvist1} imply that the maps that split off
$\mathbb{W}_0$ in the preceding argument split off the $W_0(A)$
component on $\pi_0$.  This splitting induces the stated commutative
ring isomorphism since it is induced by the splitting
$\Umot(\Perf_{\bbS[t]}) \simeq \Umot(\Perf_{\bbS}) \vee \mathbb{W}_0$
of counital coassociative cocommutative coalgebras.
\end{proof}

Isomorphism~\eqref{eq:Witt2} motivates the following definition:

\begin{definition}
The spectrum of rational Witt vectors of a $\infty$-category
$\cA \in \idemstabcat$ is defined as $\Map(\mathbb{W}_0,\Umot(\cA))$.
\end{definition}

The argument for Proposition~\ref{prop:maplax} and the fact that $\Umot$
is monoidal~\cite[\S4]{BGT1} yields the following corollary:

\begin{corollary}\label{cor:wittlax}
The mapping spectrum $\Map(\mathbb{W}_0, -)$ provides a lax symmetric
monoidal functor $\Motadd \to \ispec$.  Therefore, when $\cA$ is an $E_n$
object in $\idemstabcat$, the spectrum of rational Witt vectors
$\Map(\mathbb{W}_0,\cA)$ is an $E_n$ object in symmetric spectra. 
\end{corollary}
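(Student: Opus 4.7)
The plan is to derive both assertions by assembling results already in the paper: the first by running the argument of Proposition~\ref{prop:maplax} verbatim with $\Umot(\Perf_{\bbS[t]})$ replaced by $\mathbb{W}_0$, and the second by combining that lax symmetric monoidal structure with the symmetric monoidality of $\Umot$ and the standard fact that lax symmetric monoidal functors preserve $E_n$-algebras.

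For the first assertion, I would recall the key observation from the proof of Proposition~\ref{prop:maplax}: for any symmetric monoidal stable $\i$-category $\aC^\otimes$, the spectral Yoneda pairing refines to a lax symmetric monoidal bifunctor $(\aC^{\op}\times\aC)^\otimes \to \ispec^\otimes$, via the symmetric monoidality of the Yoneda embedding~\cite[6.3.1.12]{HA} together with the relative adjoint functor theorem~\cite[7.3.2.7]{HA}. Restricting the first argument to a commutative algebra in $\aC^{\op}$, equivalently a counital cocommutative coalgebra in $\aC$, then yields a lax symmetric monoidal functor $\aC \to \ispec$. Taking $\aC = \Motadd$ and using that Theorem~\ref{thm:decomposition} endows $\mathbb{W}_0$ with precisely such a counital coassociative cocommutative coalgebra structure, I obtain the desired lax symmetric monoidal functor $\Map(\mathbb{W}_0, -) \colon \Motadd \to \ispec$.

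For the second assertion, I would first invoke the symmetric monoidality of $\Umot\colon \idemstabcat \to \Motadd$ established in~\cite[\S4]{BGT1}. Since any symmetric monoidal functor transports $E_n$-algebras to $E_n$-algebras, $\Umot(\cA)$ is an $E_n$-algebra in $\Motadd$ whenever $\cA$ is $E_n$ in $\idemstabcat$. Composing with the lax symmetric monoidal functor of the previous paragraph, and invoking the standard fact that lax symmetric monoidal functors preserve algebras over any $\i$-operad (by transporting the operadic structure maps through the lax structure), I conclude that $\Map(\mathbb{W}_0, \Umot(\cA))$ acquires a canonical $E_n$-algebra structure in $\ispec$.

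There is no genuine mathematical obstacle here: the substantive work of splitting $\mathbb{W}_0$ off $\Umot(\Perf_{\bbS[t]})$ as a counital cocommutative coalgebra was completed in Theorem~\ref{thm:decomposition}, and the lax symmetric monoidality of mapping out of a cocommutative coalgebra is already encoded in the proof of Proposition~\ref{prop:maplax}. The only point that deserves some attention in the write-up is being careful that the splitting of Theorem~\ref{thm:decomposition} genuinely respects the coalgebra structure (so that the restriction of the Yoneda pairing to $\mathbb{W}_0$ inherits the lax symmetric monoidal structure rather than merely the underlying spectrum-valued functor), but this was the content of the argument invoking~\cite[\S 5.2.3]{HA} and Proposition~\ref{prop:appendix} in the proof of Theorem~\ref{thm:decomposition}.
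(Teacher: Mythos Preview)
Your proposal is correct and matches the paper's own justification essentially verbatim: the paper simply states that the result follows from the argument of Proposition~\ref{prop:maplax} together with the symmetric monoidality of $\Umot$ from~\cite[\S4]{BGT1}, which is precisely what you have unpacked. Your additional remark about the coalgebra structure on $\mathbb{W}_0$ coming from Theorem~\ref{thm:decomposition} is the right observation and is indeed where the substantive work lies.
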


Specializing to the case of $E_n$ ring spectra, we have the following
further corollary.

\begin{corollary}\label{cor:last}
Let $R$ be an $E_n$ ring spectrum. Then the associated rational Witt
ring spectrum $\Map(\mathbb{W}_0,\Umot(\Perf_{R}))$ is an $E_{n-1}$
ring spectrum. 
\end{corollary}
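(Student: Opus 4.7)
The plan is to combine Lurie's resolution of Mandell's conjecture with the already-established Corollary~\ref{cor:wittlax}. The key observation is that the drop from $E_n$ to $E_{n-1}$ happens entirely at the first step, namely in passing from the ring spectrum $R$ to its $\i$-category of perfect modules, while the rest of the machinery preserves algebraic structure level-for-level.

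More precisely, the first step is to invoke \cite[8.1.2.6]{HA}: given an $E_n$ ring spectrum $R$, the module $\i$-category $\Mod_R$ is naturally an $E_{n-1}$-monoidal $\i$-category, with tensor product $-\wedge_R -$ coming from the Dunn additivity shift applied to the remaining $E_{n-1}$-worth of commutativity beyond the $E_1$ used to define the module category. Since the full subcategory $\Perf_R\subset \Mod_R$ is closed under this tensor product, it inherits an $E_{n-1}$-algebra structure in the symmetric monoidal $\i$-category $(\idemstabcat,\idemtimes)$.

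Next, the plan is to transport this $E_{n-1}$-structure all the way to symmetric spectra. Because $\Umot\colon \idemstabcat\to\Motadd$ is symmetric monoidal (\cite[\S 4]{BGT1}), it sends $E_{n-1}$-algebras to $E_{n-1}$-algebras, so $\Umot(\Perf_R)$ is an $E_{n-1}$-algebra in $\Motadd$. Then Corollary~\ref{cor:wittlax} provides a lax symmetric monoidal functor $\Map(\mathbb{W}_0,-)\colon\Motadd\to\ispec$, and such functors carry $E_{n-1}$-algebras to $E_{n-1}$-algebras. Applying this to $\Umot(\Perf_R)$ produces the desired $E_{n-1}$-ring spectrum structure on $\Map(\mathbb{W}_0,\Umot(\Perf_R))$.

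The only step with any real content is the appeal to Lurie's version of Mandell's conjecture; all subsequent steps are formal consequences of monoidal and lax monoidal functoriality, which are already recorded as Corollary~\ref{cor:wittlax} and in \cite[\S 4]{BGT1}. The main conceptual point to emphasize in the write-up is that the single level of commutativity is ``spent'' in going from the ring spectrum to its module category, exactly as in the classical relationship between $E_n$-algebras and their $(n-1)$-fold monoidal categories of modules.
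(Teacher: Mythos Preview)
Your proposal is correct and follows essentially the same route as the paper: invoke Lurie's \cite[8.1.2.6]{HA} to make $\Perf_R$ an $E_{n-1}$-algebra in $\idemstabcat$, then apply Corollary~\ref{cor:wittlax}. The only difference is cosmetic: where you pass to $\Perf_R$ by noting it is closed under $-\wedge_R-$, the paper routes this through the equivalence $\idemstabcat\simeq\mathrm{Pr}^{\mathrm{L}}_{\mathrm{St},\omega}$ and \cite[\S 5.5.7]{HTT} to make explicit that an $E_{n-1}$-monoidal structure on the compact objects really is an $E_{n-1}$-algebra structure with respect to $\idemtimes$; and the paper absorbs your separate $\Umot$ step into the statement of Corollary~\ref{cor:wittlax}, which is already phrased for $E_n$ objects of $\idemstabcat$.
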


\begin{proof}
By Lurie's resolution of Mandell's conjecture
(see \cite[8.1.2.6]{HA}), the $\i$-category of modules for an $E_n$
ring spectrum $R$ is an $E_{n-1}$ object in the $\i$-category
$\mathrm{Pr^L_{St}}$ of presentable stable $\i$-categories.  Since the
category of $R$-modules  is compactly generated and the symmetric
monoidal structure on the $\i$-category $\mathrm{Pr^L_{St,\omega}}$ of
compactly-generated stable $\i$-categories is induced by the symmetric
monoidal structure on $\mathrm{Pr^L_{St}}$, we can conclude
from~\cite[\S 5.5.7]{HTT} that the $\i$-category of perfect modules
for an $E_n$ ring spectrum is an $E_{n-1}$ object in $\idemstabcat$.
The result now follows from Corollary~\ref{cor:wittlax}.
\end{proof}

\appendix
\section{Splitting coalgebras}
In this appendix we verify some technical results about splitting of
(point-set) coalgebras.  Let $(\cC,\otimes, {\bf 1})$ be a symmetric monoidal category. Recall that a {\em coalgebra} $(A,\mu_A,\eta_A)$ in $\cC$ consists of an object $A \in \cC$ and two maps $\mu_A\colon A \to A\otimes A $ (the comultiplication) and $\eta_A\colon A \to {\bf 1}$ (the counit). If the diagram
$$
\xymatrix{
A \ar[d]_-{\mu_A} \ar[r]^-{\mu_A} & A \otimes A \ar[r]^-{\mu_A \otimes \Id} & (A \otimes A)\otimes A \ar[d]^-{\simeq} \\
A\otimes A \ar[rr]_{\Id \otimes \mu_A} && A\otimes (A \otimes A) 
}
$$
commutes we say that $(A,\mu_A,\eta_A)$ is {\em co-associative}, and if the diagram
\begin{equation}\label{eq:unit}
\xymatrix{
A \ar@{=}[drr] && A\otimes A \ar[ll]_-{\eta_A \otimes Id} \ar[rr]^-{\Id \otimes \eta_A} && A \ar@{=}[dll] \\
&& A \ar[u]^{\mu_A} &&
}
\end{equation}
commutes we say that $(A,\mu_A,\eta_A)$ is {\em counital}.
Finally, $(A,\mu_A,\eta_A)$ is {\em cocommutative} if in addition the diagram
\begin{equation}\label{eq:co-commutative}
\xymatrix{
A \ar[d]_-{\mu_A} \ar[r]^-{\mu_A} & A\otimes A \\
A \otimes A \ar[ur]^-\simeq_{\tau_{A,A}} & \,,
}
\end{equation}
commutes, where $\tau_{A,A}$ stands for the symmetry constraint. A {\em coalgebra map} $f\colon (A,\mu_A,\eta_A) \to (B,\mu_B,\eta_B)$ consists of a map $f\colon A \to B$ in $\cC$ making the diagram
$$
\xymatrix{
 A \otimes A \ar[r]^-{f\otimes f} & B \otimes B \\
 A \ar[r]_-f \ar[u]^{\mu_A} & B \ar[u]_-{\mu_B}\,
}
$$
commutes.
When $\eta_B \circ f=\eta_A$ we say that it is {\em counit preserving}.

Now, let us assume that the symmetric monoidal category $(\cC,\otimes,{\bf 1})$ is moreover additive. Note that given counital coassociative coalgebras $(A,\mu_A,\eta_A)$ and $(B,\mu_B,\eta_B)$, the direct sum $A \oplus B \in \cC$ becomes a counital coassociative coalgebra. Its comultiplication is given by 
$$ \mu_{A\oplus B}\colon A \oplus B \stackrel{\mu_A \oplus \mu_B}{\too}(A \otimes A) \otimes (B\otimes B) \subset (A\oplus B) \otimes (A\oplus B)$$
and its counit is given by 
$$ \eta_{A\oplus B} \colon A \oplus B \stackrel{\eta_A\oplus \eta_B}{\too} {\bf 1} \oplus {\bf 1} \stackrel{\nabla}{\too}{\bf 1}\,.$$
Moreover, if $(A, \mu_A,\eta_A)$ and $(B,\mu_B,\eta_B)$ are cocommutative the same holds for the direct sum $A\oplus B$. We now have all the ingredients needed for the following result:
\begin{proposition}\label{prop:appendix}
Consider the following diagram in $\cC$
$$
\xymatrix{
C \ar@<-0.5ex>[r]_-f & A \ar@<-0.5ex>[l]_-r \ar@<-0.5ex>[r]_g  & B   \ar@<-0.5ex>[l]_-s\,.
}
$$
Assume that $r\circ f =\Id$, $g\circ s=\Id$ and $f\circ r + s\circ g =\Id$. Assume also that $A$ and $B$ are counital coassociative coalgebras and that $s$ and $g$ are coalgebra maps with $s$ counit preserving. Under these assumptions, $C$ becomes a counital coassociative coalgebra and $r$ and $f$ coalgebra maps with $f$ counit preserving. Moreover, if $A$ and $B$ are cocommutative the same holds for $C$. Furthermore, the induced isomorphism $[r\,\,\,g]\colon A \stackrel{\sim}{\to} C \oplus B$ is a counit preserving coalgebra map.
\end{proposition}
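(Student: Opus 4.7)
The plan is to transport the coalgebra structure on $A$ across the splitting and identify the complementary summand $C$ as a subcoalgebra. First I would define $\mu_C := (r \otimes r) \circ \mu_A \circ f$ and $\eta_C := \eta_A \circ f$, and record three preliminary identities: $g f = 0$ and $r s = 0$ (obtained from $fr + sg = \Id_A$ by composing with $g$ on the left and $f$ on the right, respectively with $r$ on the left and $s$ on the right), and $(sg \otimes sg) \circ \mu_A = \mu_A \circ sg$, which follows by combining the coalgebra-map properties of $s$ and $g$ via $(s \otimes s)(g \otimes g)\mu_A = (s \otimes s)\mu_B g = \mu_A sg$.

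The central step---and the main obstacle---is showing $\mu_A \circ f = (f \otimes f) \circ \mu_C$, equivalently that $\mu_A f$ factors through the image of $f \otimes f \colon C \otimes C \to A \otimes A$. Expanding $\Id_A = fr + sg$ tensorially gives
\[
\mu_A f \;=\; ((fr + sg) \otimes (fr + sg))\,\mu_A f \;=\; (fr \otimes fr)\mu_A f + (\text{three cross terms}).
\]
The diagonal cross term $(sg \otimes sg)\mu_A f = \mu_A sg f = 0$ by $gf = 0$. The hard part will be the vanishing of the two off-diagonal cross terms $(fr \otimes sg)\mu_A f$ and $(sg \otimes fr)\mu_A f$; this requires combining coassociativity of $\mu_A$ with the counit axiom for $\eta_A$ and the counit-preservation of $s$, in the style of classical coalgebra-retraction arguments.

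Granting this compatibility, everything else is formal. The identity $\mu_A f = (f \otimes f)\mu_C$ is precisely the statement that $f$ is a coalgebra map, and $\eta_A f = \eta_C$ by definition, so $f$ is counit-preserving. For $r$, I would compute $\mu_C r = (r \otimes r)\mu_A f r = (r \otimes r)\mu_A(\Id_A - sg) = (r \otimes r)\mu_A - (rs \otimes rs)\mu_B g = (r \otimes r)\mu_A$ using $rs = 0$, so $r$ is a coalgebra map. Coassociativity and counitality of $(\mu_C, \eta_C)$ follow by pre- and post-composing the corresponding axioms for $A$ with $f$ and $r \otimes r$ (respectively $r$), using $rf = \Id_C$; cocommutativity of $\mu_C$ under the cocommutativity hypothesis on $\mu_A$ and $\mu_B$ follows by the same pattern. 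Finally, $[r, g]$ is already an isomorphism with inverse $[f, s]$, and its compatibility with the direct-sum coalgebra structure $\mu_C \oplus \mu_B$ on $C \oplus B$ reduces to a block-by-block check: the two diagonal blocks reproduce $\mu_C$ and $\mu_B$, while the four off-diagonal blocks vanish using $rs = 0$, $gf = 0$, and the central vanishing identity. Counit-preservation of $[r, g]$ follows from $\eta_A = \eta_A(fr + sg) = \eta_C r + \eta_B g$.
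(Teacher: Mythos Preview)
Your definitions $\mu_C=(r\otimes r)\mu_A f$ and $\eta_C=\eta_A f$ coincide with the paper's, and you have correctly isolated the crux: the vanishing of the mixed terms $(sg\otimes fr)\mu_A f$ and $(fr\otimes sg)\mu_A f$, equivalently the identity $\mu_A f=(f\otimes f)\mu_C$. But this step cannot be completed under the stated hypotheses; it is simply false. In $k$-vector spaces, take $A=ke_1\oplus ke_2$ with $e_1,e_2$ grouplike (so $\mu_A(e_i)=e_i\otimes e_i$ and $\eta_A(e_i)=1$), $B=k$ grouplike, $s(1)=e_1$, and $g(e_1)=g(e_2)=1$. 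Then $s$ and $g$ are coalgebra maps, $s$ is counit-preserving, $gs=\Id$, and the complementary idempotent forces $f(1)=e_2-e_1$, $r(e_1)=0$, $r(e_2)=1$. One computes $(sg\otimes\Id)\mu_A f(1)=e_1\otimes(e_2-e_1)\neq 0$, so your cross term survives; indeed $\mu_A f(1)=e_2^{\otimes 2}-e_1^{\otimes 2}\neq(e_2-e_1)^{\otimes 2}=(f\otimes f)\mu_C(1)$, so $f$ is not a coalgebra map, and $\eta_C(1)=\eta_A(e_2-e_1)=0$, so $(C,\mu_C,\eta_C)$ is not even counital.

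The paper's own argument breaks at the same point via a different slip: from $(g\otimes g)(f\otimes f)\mu_C=(g\otimes g)\mu_A f$ (both sides vanish since $gf=0$) it concludes $(f\otimes f)\mu_C=\mu_A f$ by invoking that $g\otimes g$ is a split epimorphism---but cancelling a post-composed map requires it to be a monomorphism, and the example above shows the inference is invalid. What the hypotheses lack is a condition ensuring that $\ker g=\mathrm{im}\,f$ is a subcoalgebra of $A$, i.e.\ $\mu_A(\ker g)\subseteq\ker g\otimes\ker g$. This does hold in the paper's intended application, where $A$ arises from $\bbS[t]$, $g$ from $t\mapsto 0$, and $\ker g$ is spanned by the grouplike $t^n$ with $n\geq 1$; but it is not a consequence of the assumptions stated in the proposition.
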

\begin{proof}
Let us start by constructing the comultiplication on $C$. Note that it follows from our hypothesis that $C$ identifies with the cokernel of $s$. Hence, consider the following commutative diagram
$$
\xymatrix{
B \otimes B \ar[r]^-{s \otimes s} & A\otimes A \ar[r] & \mathrm{cok}(s\otimes s) \ar[r] & \mathrm{cok}(s) \otimes \mathrm{cok}(s) \\
B \ar[u]^-{\mu_B} \ar[r]_s & A \ar[u]^-{\mu_A} \ar[r]_r & \mathrm{cok}(s) \ar@{-->}[u] \ar@{=}[r] & \mathrm{cok}(s) \ar[u]_-{=:\mu_C}\,.
}
$$
The left-hand-side square commutes since $s$ is a coalgebra map; the middle one commutes since the dashed arrow is induced by the universal property of the cokernel; and the right-hand-side one commutes since the composition 
$$B \oplus B \stackrel{s\otimes s}{\to} A \otimes A \stackrel{r\otimes r}{\to} \mathrm{cok}(s) \otimes \mathrm{cok}(s)$$
is trivial. The comultiplication $\mu_C$ on $C$ is then given by the vertical arrow on the right-hand-side. With this definition it is clear that $r$ becomes a coalgebra map. Note also that since $\mu_A$ and $\mu_B$ are co-associative the same holds for $\mu_C$. Similarly, if by hypothesis $\mu_A$ and $\mu_B$ are cocommutative the same holds for $\mu_C$.

Let us now prove that $f$ is also a coalgebra map. Consider the diagram:
\begin{equation}\label{eq:diag2}
\xymatrix{
C\otimes C \ar[r]^-{f \otimes f} & A \otimes A \ar[r]^-{g \otimes g} & B \otimes B \\
C \ar[u]^-{\mu_C} \ar[r]_-f & A \ar[u]^-{\mu_A} \ar[r]_-g & B \ar[u]_-{\mu_B}\,.
}
\end{equation}
One needs to show that the left-hand-side square is commutative. By hypothesis $g$ is a coalgebra map and so the right-hand-side square is commutative. Moreover, since $g \circ f =0$, the outer square is also commutative (since both maps from $C$ to $B \otimes B$ are trivial). This implies that the two maps
\begin{eqnarray*}
C \stackrel{\mu_C}{\to} C\otimes C \stackrel{f \otimes f}{\to} A \otimes A \stackrel{g \otimes g}{\to} B \otimes B && C \stackrel{f}{\to} A \stackrel{\mu_A}{\to} A \otimes A \stackrel{g \otimes g}{\to} B \otimes B 
\end{eqnarray*}
agree. Since $g \otimes g$ is surjective (note that it admits a section $s \otimes s$) we then conclude that  $(f \otimes f) \circ \mu_C =\mu_A \circ f$, \ie that the above left-hand-side square in \eqref{eq:diag2} commutes.

Let us now define the counit of $C$ as the composition $\mu_C \colon C \stackrel{f}{\to} A \stackrel{\eta_A}{\to} {\bf 1}$. Note that proving the commutativity of diagram \eqref{eq:unit} amounts to show that both composites 
\begin{eqnarray*}
C \stackrel{\mu_C}{\to} C\otimes C \stackrel{\Id \otimes f}{\to} C \otimes A \stackrel{\Id \otimes \eta_A}{\to} C &&
C \stackrel{\mu_C}{\to} C\otimes C \stackrel{f \otimes \Id}{\to} A \otimes C \stackrel{\eta_A \otimes \Id}{\to} C 
\end{eqnarray*}
are the identity. The proof is similar and so we restrict ourselves to the left-hand-side case. Consider the following commutative diagram
$$
\xymatrix{
C \ar[r]^-{\mu_C} & C \otimes C \ar[r]^{\Id \otimes f} & C \otimes A \ar[r]^-{\Id \otimes \eta_A} & C \ar@{=}[d] \\
A \ar[u]^-r \ar[r]^-{\mu_A} & A \otimes A \ar[u]_-{r \otimes r} \ar[rr]^-{r \otimes (\eta_A \circ f \circ r)} && C \\
C \ar[u]^-f \ar[r]_-{\mu_C} & C \otimes C \ar[u]_-{f\otimes f} & & \,.
}
$$
Since $r \circ f=\Id$ it suffices to show that the composite
\begin{equation}\label{eq:composite}
C \stackrel{f}{\to} A \stackrel{\mu_A}{\to} A \otimes A \stackrel{r\otimes(\eta_A \circ f \circ r)}{\too} C
\end{equation}
is the identity map. Since $f \circ r =(\Id-s\circ g)$, we have $r \otimes (\eta_A \circ f \circ r)=r\otimes \eta_A -r\otimes(\eta_A \circ s\circ g)$ and hence, since $(r \otimes \eta_A)\circ \mu_A=r$, we obtain the equality 
$$ (r \otimes (\eta_A \circ f \circ r)) \circ \mu_A = r - (r \otimes (\eta_A \circ s \circ g)) \circ \mu_A\,.$$
Now, note that $g\circ f=0$. Since $s$ is a monomorphism it suffices to show that $s\circ g\circ f=0$ which follows from the equalities:
$$s \circ g \circ f= s\circ g\circ (f\circ r \circ f) = (s \circ g) \circ (\Id - s\circ g)\circ f = (s \circ g - s\circ g) \circ f=0\,.$$
The equalities $g\circ f=0$ and $\mu_A\circ f =(f \otimes f) \circ \mu_C$ allows us then to conclude that 
$$ (r\otimes(\eta_A \circ f \circ r)) \circ \mu_A \circ f =r\circ f =\Id\,.$$
This shows that $C$ is also a counital coalgebra and that $f$ is counit preserving. 

Let us finally prove that the induced isomorphism $[r \,\,\,g]\colon A \stackrel{\sim}{\to} C \oplus B$ is a counit preserving coalgebra map. The commutative squares
$$
\xymatrix{
A \otimes A \ar[r]^-{r \otimes r} & C \otimes C  && A \otimes A \ar[r]^-{g \otimes g} & B \otimes B \\
A \ar[u]^-{\mu_A} \ar[r]_-r& C \ar[u]_-{\mu_C} && A \ar[u]^-{\mu_A} \ar[r]_g & B\ar[u]_-{\mu_B} 
}
$$ 
imply automatically that $[r\,\,\,g]$ is a coalgebra map. In what concerns the counit, one needs to prove that the following diagram commutes
$$
\xymatrix{
A \ar[dd]_-{\eta_A} \ar[r]^-{[r \,\,\,g]} \ar[r] & C \oplus B \ar[d]^-{\eta_C \oplus \eta_B}\\
&  {\bf 1} \oplus {\bf 1} \ar[d]^-{\nabla} \\
{\bf 1} \ar@{=}[r] & {\bf 1}\,.
}
$$
Note first that since by hypothesis $s$ is counit preserving we have $\eta_B \circ g =\eta_A \circ s \circ g$. On the other hand, by the above definition of $\eta_C$ we have $\eta_C \circ r=\eta_A \circ f \circ r$. As a consequence, we obtain the equality
$$ \eta_C \circ r + \eta_B \circ g =\eta_A(f\circ r + s \circ g) =\eta_A$$
and thus conclude that the above diagram commutes. This concludes the proof.
\end{proof}

\end{document}